\definecolor{lightgreen}{RGB}{176,220,213}
\definecolor{brightred}{RGB}{255,80,102}
\numberwithin{equation}{section}
\def \bl  {\textcolor{blue} }
\newcommand{\Rmnum}[1]{\expandafter\@slowromancap\romannumeral #1@}
\begin{document}

\newtheorem{theorem}{Theorem}[section]
\newtheorem{observation}[theorem]{Observation}
\newtheorem{corollary}[theorem]{Corollary}
\newtheorem{algorithm}[theorem]{Algorithm}
\newtheorem{definition}[theorem]{Definition}
\newtheorem{guess}[theorem]{Conjecture}
\newtheorem{claim}{Claim}[section]
\newtheorem{problem}[theorem]{Problem}
\newtheorem{question}[theorem]{Question}
\newtheorem{lemma}[theorem]{Lemma}
\newtheorem{proposition}[theorem]{Proposition}
\newtheorem{fact}[theorem]{Fact}

\captionsetup[figure]{labelfont={bf},name={Fig.},labelsep=period}
\makeatletter
  \newcommand\figcaption{\def\@captype{figure}\caption}
  \newcommand\tabcaption{\def\@captype{table}\caption}
\makeatother

\newtheorem{acknowledgement}[theorem]{Acknowledgement}

\newtheorem{axiom}[theorem]{Axiom}
\newtheorem{case}[theorem]{Case}
\newtheorem{conclusion}[theorem]{Conclusion}
\newtheorem{condition}[theorem]{Condition}
\newtheorem{conjecture}[theorem]{Conjecture}
\newtheorem{criterion}[theorem]{Criterion}
\newtheorem{example}[theorem]{Example}
\newtheorem{exercise}[theorem]{Exercise}
\newtheorem{notation}{Notation}
\newtheorem{solution}[theorem]{Solution}
\newtheorem{summary}[theorem]{Summary}

\newenvironment{proof}{\noindent {\bf
Proof.}}{\rule{3mm}{3mm}\par\medskip}
\newcommand{\remark}{\medskip\par\noindent {\bf Remark.~~}}
\newcommand{\pp}{{\it p.}}
\newcommand{\de}{\em}
\newcommand{\mad}{\rm mad}

\newcommand{\wrt}{with respect to }
\newcommand{\qed}{\hfill\rule{0.5em}{0.809em}}
\newcommand{\var}{\vartriangle}
 
\title{{\large \bf The strong fractional choice number of $3$-choice critical graphs}}
 
\author{Rongxing Xu\thanks{Department of Mathematics, Zhejiang Normal University,  China. E-mail:
xurongxing@yeah.net. Grant Number: NSFC 11871439. Supported also by
China Scholarship Council .}, \and  Xuding Zhu\thanks{Department of Mathematics, Zhejiang Normal University,  China.  E-mail: xdzhu@zjnu.edu.cn. Grant Number: NSFC 11971438. Supported also by
the 111 project of The Ministry of Education of China.}}

\maketitle

\begin{abstract}
A graph $G$ is called $3$-choice critical if $G$ is not $2$-choosable but any proper subgraph  is $2$-choosable. A graph $G$ is strongly fractional $r$-choosable if $G$ is $(a,b)$-choosable for all positive integers $a,b$ for which $a/b \ge r$. The strong fractional choice number of $G$ is $ch_f^s(G) = \inf \{r: G $ is strongly fractional $r$-choosable$\}$. This paper determines the strong fractional choice number of all $3$-choice critical graphs.
\end{abstract}

\section{Introduction}

An {\em $a$-list assignment} of a graph $G$ is a mapping $L$ which assigns to each vertex $v$ of $G$ a set $L(v)$ of $a$ colours. A {\em $b$-fold coloring} of $G$ is a  mapping $\phi$  which assigns to each vertex $v$ of $G$ a  set $\phi(v)$ of $b$ colors such that for every edge $uv$, $\phi(u) \cap \phi(v) = \emptyset$.
An{ \em $(L,b)$-colouring} of $G$ is a $b$-fold coloring $\phi$ of $G$ such that $\phi(v) \subseteq L(v)$  for each vertex $v$.
We say $G$ is {\em $(a,b)$-choosable}  if for any $a$-list assignment $L$ of $G$, there is an $(L,b)$-colouring of $G$, and   $G$ is $(a,b)$-colourable if there is a $b$-fold colouring $\phi$ of $G$ such that $\phi(v) \subseteq \{1,2,\ldots, a\}$ for each vertex $v$.
We say $G$ is {\em  $a$-choosable } (respectively, $a$-colourable) if $G$ is $(a,1)$-choosable  (respectively, $(a,1)$-colourable). The {\em choice number} $ch(G)$ of $G$ is the minimum integer $a$ such that $G$ is $a$-choosable, and the {\em chromatic number} $\chi(G)$ of $G$ is 
the minimum integer $a$ such that $G$ is $a$-colourable.
The concept of list colouring of graphs was introduced independently by Erd\H{o}s, Rubin and Taylor \cite{ERT} and Vizing \cite{Vizing1976} in the 1970's, and has been studied extensively in the literature.

 The {\em fractional chromatic number} $\chi_f(G)$ of a graph $G$ is defined as 
$$\chi_f(G) = \inf \{\frac{a}{b}: G \text{ is $(a,b)$-colourable}.\}$$
The {\em fractional choice number} $ch_f(G)$ of a graph $G$ is defined as 
$$ch_f(G) = \inf \{\frac{a}{b}: G \text{ is $(a,b)$-choosable}.\}$$
It follows from the definition that for any graph $G$, $\chi(G) \le ch(G)$ and $\chi_f(G) \le ch_f(G)$. It is known that there are bipartite graph with arbitrary large choice number. On the other hand, it was proved by Alon, Tuza and Voigt \cite{ATV} that $ch_f(G) = \chi_f(G)$ for every graph $G$. So $ch_f(G)$ is not really a new graph parameter. In particular, $ch_f(G)=2$ for all bipartite graph with at least one edge. 

The concept of strong fractional choice number of a graph was introduced in \cite{Zhu2018}. Given a real number $r$, we say a graph $G$ is {\em strongly fractional $r$-choosable} if $G$ is $(a,b)$-choosable for any  $a, b$ for which $\frac{a}{b} \ge r$.  The {\em strong fractional choice number} $ch_f^s(G)$ of $G$ is defined as 
$$ch_f^s(G) = \inf\{r: G \text{ is strongly fractional $r$-choosable} \}.$$
It follows from the definition that $ch_f^s(G) \ge ch(G)-1$. It was proved in \cite{XuZhu20-concept} that for any finite graph $ch_f^s(G)$ is a rational number and  either $ch_f^s(G)=\chi_f(G)$  or 
the infimum in the definition is attained  and hence can be replaced by the minimum. However, the result in  this  paper shows that if $ch_f^s(G)=\chi_f(G)$, then the infimum in the definition maybe not attained. The parameter $ch_f^s(G)$ may serve as a refinement for the choice number of $G$ and has been studied in a few papers \cite{JZ2019,LZ2019}. However, it remains an open question whether $ch_f^s(G) \le ch(G)$ for every graph $G$.  

For any graph $G$, we have $ch_f^s(G) \ge \chi_f(G)$, and  $ch_f^s(G) \ge 2$ for every graph with at least one edge. It seems to be a difficult problem to characterize all graphs $G$ with $ch_f^s(G)=2$.

Erd\H{o}s, Rubin and Taylor \cite{ERT} characterized all the $2$-choosable graphs. Given a graph $G$, the {\em core} of $G$ is obtained from $G$ by repeatedly removing degree $1$ vertices. Denote by $\Theta_{k_1,k_2,\ldots, k_q}$   the graph consisting of internally vertex disjoint paths of lengths $k_1,k_2, \ldots, k_q$ connecting two vertices $u$ and $v$. Erd\H{o}s, Rubin and Taylor proved that a graph $G$ is $2$-choosable if and only if the core of $G$ is $K_1$ or an even cycle or $\Theta_{2,2,2p}$ for some positive integer $p$. 

We say a graph $G$ is   {\em  $3$-choice critical}  if $G$ is not $2$-choosable but any proper subgraph of $G$ is $2$-choosable.
Voigt characterized all the $3$-choice critical graphs.

\begin{theorem}[\cite{Voigt1998}]
\label{3-choice critical}
	A graph is 3-choice critical if and only if it is one of the following:
	\begin{enumerate}
		\item An odd cycle.
		\item Two vertex-disjoint even cycles joined by a path.
		\item Two even cycles with one vertex in common. 
		\item  $\Theta_{2r,2s,2t}$  with $r\geq 1$, and $s, t>1$, or  
			  $\Theta_{2r+1,2s+1,2t+1}$ with $r\ge 0$, $s,t > 0$.
		\item $\Theta_{2,2,2,2t}$ graph with $t\geq 1$.
	\end{enumerate}
\end{theorem}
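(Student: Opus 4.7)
My plan is to reduce 3-choice criticality to statements about cores via the Erd\H{o}s--Rubin--Taylor characterization, which says that a graph is 2-choosable if and only if its core is $K_1$, an even cycle, or some $\Theta_{2,2,2p}$. The first step is to observe that any 3-choice critical graph $G$ satisfies $\delta(G) \ge 2$: if $v$ were a pendant then $G$ and $G - v$ would share the same core, so $G - v$ would also be non-2-choosable, contradicting minimality. Hence $G$ equals its own core, and for every edge $e$ and vertex $v$, the cores of $G - e$ and $G - v$ lie in the Erd\H{o}s--Rubin--Taylor list.

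For the ``if'' direction I would verify each of the five families individually. In each case one checks that $G$ itself is not in the Erd\H{o}s--Rubin--Taylor list and then, for each edge $e$ and vertex $v$, computes the cores of $G - e$ and $G - v$. The representative computations are short: removing an edge from the $2r$-branch of $\Theta_{2r,2s,2t}$ collapses (after deleting the resulting pendants) to the even cycle $C_{2s+2t}$; removing an edge from a length-$2$ branch of $\Theta_{2,2,2,2t}$ leaves $\Theta_{2,2,2t}$ as core, while removing an edge from the length-$2t$ branch leaves $\Theta_{2,2,2}$; and for two even cycles meeting at a vertex or joined by a path of bridges, deleting any edge produces a pendant tree attached to a single even cycle.

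For the ``only if'' direction I would argue via the block structure of $G$. Suppose first that $G$ is 2-connected. If $G$ is 2-regular then $G$ is a cycle, necessarily odd (type 1) since even cycles are 2-choosable. Otherwise, let $H$ be the multigraph obtained from $G$ by suppressing every degree-$2$ vertex, so that $G$ is a subdivision of $H$ and $\delta(H) \ge 3$. For any edge $f$ of $H$ and the corresponding edge $e$ of $G$, the core of $G - e$ is a subdivision of $H - f$; case analysis on the three 2-choosable shapes shows $H - f$ must have at most $2$ vertices of degree $\ge 3$, which combined with $\delta(H) \ge 3$ forces $|V(H)| = 2$. Hence $G = \Theta_{a_1, \ldots, a_k}$ for some $k \ge 3$. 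When $k = 3$, the core of $G - e$ for $e$ in the $i$-th branch is $C_{a_j + a_\ell}$, which must be an even cycle, so all $a_i$ share a common parity; excluding $\{a_1, a_2, a_3\} = \{2, 2, 2p\}$ yields type 4. When $k \ge 4$, the core of $G - e$ is a generalized theta on one fewer branch, which must be 2-choosable and hence of the form $\Theta_{2,2,2p}$; imposing this for every omitted branch forces $k = 4$ and the three shortest branches to have length $2$, giving type 5.

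If $G$ has a cut vertex, I would use the block-cut tree. By minimality every block (a proper subgraph) is 2-choosable, so every non-bridge block is either an even cycle or some $\Theta_{2,2,2p}$; a separate deletion argument rules out $\Theta_{2,2,2p}$ blocks, so every non-bridge block is a cycle. If any cycle block were odd, choosing $e$ in a different block would leave $G - e$ with that odd cycle intact in the core, contradicting 2-choosability; so every cycle block is even. Finally, if $G$ had three or more cycle blocks, there would exist an edge $e$ (in a suitably chosen block) whose deletion leaves a proper subgraph whose core is of type 2 or type 3, still non-2-choosable; hence $G$ has exactly two even cycle blocks, yielding type 2 (joined by a path of bridges) or type 3 (sharing a cut vertex). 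The main obstacle is the $k \ge 4$ analysis in the 2-connected case and the careful block-cut bookkeeping in the non-2-connected case, both of which require pushing ``every proper subgraph has 2-choosable core'' against every possible structural deviation from the five listed families.
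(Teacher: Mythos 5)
The paper does not prove this theorem: it is stated as a cited result of Voigt \cite{Voigt1998} and used as background, so there is no proof in the paper to compare your attempt against. Evaluating your sketch on its own terms: the framework (reduce to the Erd\H{o}s--Rubin--Taylor core characterization, observe $\delta(G)\ge 2$, check only $G-e$ by downward monotonicity of $2$-choosability, and split into the $2$-connected and cut-vertex cases) is the right one, and most of the individual verifications you gesture at would go through.

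The concrete gap is in the $2$-connected, non-$2$-regular case. After suppressing degree-$2$ vertices to get a multigraph $H$ with $\delta(H)\ge 3$, you assert that ``$H-f$ must have at most $2$ vertices of degree $\ge 3$, which combined with $\delta(H)\ge 3$ forces $|V(H)|=2$.'' The degree count (equivalently, noting that $G-e$ must have first Betti number at most $2$, hence $G$ at most $3$, while $\delta(H)\ge 3$ gives Betti number $\ge |V(H)|/2+1$) only yields $|V(H)|\le 4$, not $|V(H)|=2$. You must still eliminate $|V(H)|=3$: there $H$ is forced to have degree sequence $(4,3,3)$ with the two degree-$3$ vertices joined by a single edge, and deleting the corresponding edge of $G$ leaves a core equal to two cycles sharing a vertex, which is not $2$-choosable --- a contradiction, but one that needs to be observed. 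And you must eliminate $|V(H)|=4$, where $H=K_4$ and one has to verify from the subdivision lengths that no subdivision of $K_4$ can make all six edge-deleted cores of the form $\Theta_{2,2,2p}$. A similar, smaller omission occurs in the cut-vertex case: the ``separate deletion argument'' that rules out $\Theta_{2,2,2p}$ blocks is not stated; the intended step is to delete an edge inside such a block so that its own core collapses to an even cycle, which combined with any other non-bridge block yields a non-$2$-choosable core in $G-e$. None of these fixes is difficult, but as written the deduction to $|V(H)|=2$ simply does not follow from the stated degree count.
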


 The strong fractional choice numbers of odd cycles  are easily  determined. 
\begin{proposition}
	For odd cycle $C_{2k+1}$, $ch^s_{f}(C_{2k+1})=2+\frac{1}{k}$.
\end{proposition}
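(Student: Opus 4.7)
The plan is to establish both inequalities of $ch_f^s(C_{2k+1}) = 2 + \frac{1}{k}$ separately.

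For the lower bound we invoke the general inequality $ch_f^s(G) \ge \chi_f(G)$ noted in the introduction: if $r < \chi_f(G)$, one can find positive integers $a, b$ with $a/b \ge r$ but $a/b < \chi_f(G)$, so $G$ is not $(a,b)$-colourable and hence not $(a,b)$-choosable. A direct calculation (for example, using the fractional $(2k+1, k)$-colouring obtained by cyclically shifting a maximum independent set of size $k$ around the cycle) shows $\chi_f(C_{2k+1}) = \frac{2k+1}{k}$, giving $ch_f^s(C_{2k+1}) \ge 2 + \frac{1}{k}$.

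For the upper bound the plan is to show $C_{2k+1}$ is $(a,b)$-choosable whenever $ka \ge (2k+1)b$. Fix such $(a, b)$ and any $a$-list assignment $L$ on the vertices $v_0, v_1, \ldots, v_{2k}$ (in cyclic order). The natural approach is to walk around the cycle: choose $\phi(v_0) \subseteq L(v_0)$ of size $b$, then iteratively choose $\phi(v_i) \subseteq L(v_i) \setminus \phi(v_{i-1})$ of size $b$ for $i = 1, \ldots, 2k$, requiring at the final step that $\phi(v_{2k})$ additionally avoid $\phi(v_0)$. Each intermediate step has $\ge a - b \ge b$ available colours; when $a \ge 3b$, the closing step also has $\ge a - 2b \ge b$ colours available and the greedy procedure succeeds immediately.

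The main obstacle is the borderline regime $(2+\frac{1}{k})b \le a < 3b$, which occurs whenever $b > k$. In this range the greedy closing step can fail, since $|L(v_{2k}) \setminus (\phi(v_{2k-1}) \cup \phi(v_0))|$ may drop below $b$ under an arbitrary greedy choice. The plan here is to exploit the slack $ka - (2k+1)b \ge 0$ by making coordinated choices around the cycle---either by selecting $\phi(v_0)$ so that $|\phi(v_0) \cap L(v_{2k})|$ is as small as possible, or by arranging that $\phi(v_{2k-1})$ and $\phi(v_0)$ share enough colours that $|\phi(v_{2k-1}) \cup \phi(v_0)| < 2b$, leaving at least $b$ colours in $L(v_{2k})$. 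A Hall-type averaging argument over the choices of $\phi(v_0)$, or an exchange/alternating argument that propagates a local conflict along the cycle, should deliver the required colouring. This closing step in the borderline regime is where the precise arithmetic of the hypothesis $ka \ge (2k+1)b$ (rather than merely $a \ge 2b$) becomes essential, and is the technical heart of the argument.
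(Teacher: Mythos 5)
Your lower bound is correct and identical in spirit to the paper's: $ch_f^s(G)\ge\chi_f(G)$ and $\chi_f(C_{2k+1})=2+\frac1k$.

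The upper bound, however, is not a proof. You correctly observe that the naive vertex-by-vertex greedy closes trivially when $a\ge 3b$, and you correctly identify the regime $(2+\frac1k)b\le a<3b$ as the place where the closing step can fail. But that regime is precisely where the whole content of the statement lives, and there you offer only a sketch: ``choose $\phi(v_0)$ to minimize overlap with $L(v_{2k})$,'' ``a Hall-type averaging argument,'' ``an exchange/alternating argument \ldots should deliver the required colouring.'' None of these is executed, and it is not at all clear that a vertex-by-vertex greedy, even with a cleverly chosen $\phi(v_0)$, can be repaired: a local conflict at $v_{2k}$ can in principle force a cascading re-choice all the way around the cycle, and you give no invariant that controls this. (As a side remark, the borderline regime is non-empty for essentially all $b\ge 2$ when $k\ge 2$, not only ``whenever $b>k$.'') The paper avoids this difficulty entirely by building the $b$-fold colouring colour-by-colour rather than vertex-by-vertex: each colour $c$ in $\bigcap_i L(v_i)$ is placed on a cyclic shift of a maximum independent set of size $k$, and each remaining colour $c_i$ is placed greedily starting from a vertex $v_{s_i}$ whose list misses $c_i$, which makes the colour class automatically independent without any closing step; a short counting argument then shows every vertex receives exactly $b$ colours, and the inequality $a<2b+\lceil a/(2k+1)\rceil$ derived from a hypothetical deficient vertex contradicts $a/b\ge 2+1/k$. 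If you want to salvage your vertex-centred plan you would need to actually carry out the alternating/exchange argument and verify it terminates; as written, the technical heart is missing.
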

\begin{proof}
  It is well-known that $\chi_f(C_{2k+1})=2+\frac 1k$. As $ch^s_f(G) \geq \chi_f(G)$ for any graph $G$, it suffices to show that
  $ch^s_f(C_{2k+1}) \leq 2 + \frac{1}{k}$.
  We shall show that for any $a/b \ge 2+1/k$, $C_{2k+1}$ is $(a,b)$-choosable.  
  
  Assume the vertices of $C_{2k+1}$
  are $(v_0, v_1, \ldots, v_{2k})$ in this cyclic order, $a/b \ge 2+1/k$ and $L$ is an $a$-list assignment of $C_{2k+1}$.   Assume $\bigcup^{2k+1}_{i=1}L(v_i) = \{  c_1, c_2, \ldots, c_p\}$.
  By  permuting   colours, we may assume that 
  $\bigcap^{2k+1}_{i=1}L(v_i) = \{  c_1, c_2, \ldots, c_q\}$, where $0 \le q \le a \le p$. (Note that   $q=0$ when $\bigcap^{2k+1}_{i=1}L(v_i) = \emptyset$).
   For $i=q+1,q+2,\ldots, p$, let $s_i$ be an arbitrary index such that $c_i \not\in L(v_{s_i})$.  We recursively assign colours $c_1, c_2, \ldots, c_p$ to vertices of $C_{2k+1}$.
   Assume colours  $c_1,c_2,\ldots, c_{i-1} $ have been assigned to vertices of $C_{2k+1}$ already. We assign colour $c_i$ to vertices of $C_{2k+1}$ as follows:
 
    If $i \le q$, then assign colour $c_i$ to vertices in the set  $  \{v_i,v_{i+2}, \ldots, v_{i+2k-2}\}$, where the summations in the indices are modulo $2k+1$.     
  
  If $i \ge q+1$, then we traverse the vertices of $C_{2k+1}$ one by one in the order $v_{s_i}, v_{s_i+1}, \ldots, v_{s_i+2k}$, and assign colour $i$ to vertex $v_j$   provided the following hold:
  \begin{itemize}
  	\item  $c_i \in L(v_j)$ and $c_i$ is not assigned to $v_{j-1}$.  
  	\item $v_j$ has received less than $b$ colours from $c_1,c_2,\ldots, c_{i-1}$.
  \end{itemize}
  
  It follows from the construction that each colour class is an independent set and each vertex $v_j$ is assigned at most $b$ colours and all the colours assigned to $v_j$ are from $L(v_j)$. Now we show that   each vertex is assigned exactly $b$ colours. 
  
  Assume to the contrary that $v_j$ is assigned at most  $b-1$ colours.  Assume $c_i \in L(v_j)$ and $c_i$ is not assigned to $v_j$. It follows from the colouring procedure that one of the following holds:
  \begin{enumerate}
  	\item $c_i$ is assigned to $v_{j-1}$.
  	\item $i \leq q$ and $j=i+2k$.
  \end{enumerate}   
  The first case occurs at most $b$ times as $v_{j-1}$ receives at most $b$ colours, and the second case occurs at most $ \lceil \frac{q}{2k+1} \rceil \le \lceil \frac{a}{2k+1} \rceil$ times.
   Therefore,
   $$a=|L(v_j)| \le b-1 +b+ \lceil \frac{a}{2k+1} \rceil < 2b + \frac{a}{2k+1}$$
   and hence 
    $a/b < 2+1/k$, contrary to our assumption. 
\end{proof}

 The main result of this paper is that every bipartite 3-choice critical graphs has strong fractional choice number $2$. It suffices to show that every bipartite 3-choice critical graph is $(2m+1, m)$-choosable for any positive integer $m$.
 
 It is known \cite{TuzaVoigt1996-2choosable,Voigt1998} that for an odd integer $m$,  a graph $G$ is   $(2m,m)$-choosable if and only if $G$ is $2$-choosable.  
In \cite{Voigt1998}, Voigt conjectured that every bipartite $3$-choice critical graph $G$  is $(2m,m)$-choosable for every even integer $m$.
If the conjecture were true, then all bipartite 3-choice critical graphs have strong fractional choice number $2$.  The conjecture was verified for $G=\Theta_{2,2,2,2}$ \cite{TuzaVoigt1996}.
However,  Meng, Puleo and Zhu \cite{4choose2} proved that  if $min\{r,s,t\} \geq 3$,  $r,s,t$ have the same parity, then $\Theta_{r,s,t}$ is not $(4,2)$-choosable, and if $t \geq 2$, then $\Theta_{2,2,2,2t}$ is not $(4,2)$-choosable.
Nevertheless, the other bipartite 3-choice critical graphs, i.e., two vertex-disjoint even cycles joined by a path, two even cycles with one vertex in common, $\Theta_{2,2s,2t}$  with $s, t>1$, and $\Theta_{1,2s+1,2t+1}$ with $s,t > 0$, are $(4,2)$-choosable  \cite{4choose2}.
Xu and Zhu \cite{XuZhu2020} strengthened these results and proved that these   graphs are also $(4m,2m)$-choosable for all integer $m$. Note that 
if a graph $G$ is $(4m,2m)$-choosable, then 
it is $(4m-1, 2m-1)$-choosable: if $L$ is a $(4m-1)$-list assignment, then let $c$ be a new colour, and let $L'(v)=L(v) \cup \{c\}$, we obtain a $4m$-list assignment. Let $f$ be a $2m$-fold $L'$-colouring of $G$, and let $g(v)=f(v)-\{c\}$  if $c \in f(v)$ and $g(v)=f(v)-\{c'\}$ if $c \notin f(v)$, where $c'$ is an arbitrary colour in $f(v)$. Then $g$ is a $(2m-1)$-fold $L$-colouring of $G$. 
So   $ch_f^s(G)=2$ if one of the following holds:
\begin{enumerate}
	\item $G$ is two vertex-disjoint even cycles joined by a path, two even cycles with one vertex in common.
	\item $G=\Theta_{2,2s,2t}$  with $s, t > 1$.   
	\item $G =\Theta_{1,2s+1,2t+1}$ with $s,t \ge 1$.
 \item $G=\Theta_{2,2,2,2}$. 
\end{enumerate}  

In this paper, we prove the following result.
 
\begin{theorem}
	\label{thm-main}
	If $G=\Theta_{2r,2s,2t}$  with $r,s, t>1$, or  
	$G=\Theta_{2r+1,2s+1,2t+1}$ with $r,s,t > 0$,  or  $G=\Theta_{2,2,2,2t}$ graph with  $t\geq 1$, then $G$ is $(2m+1,m)$-choosable for any positive integer $m$.
\end{theorem}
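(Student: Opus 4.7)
The plan is to handle the three graph families in Theorem~\ref{thm-main} in turn. In each case the proof will construct an $(L,m)$-colouring of $G$ from an arbitrary $(2m+1)$-list assignment $L$. For the $\Theta$-graph families, denote the two branching vertices by $u,v$; the skeleton is to first pick $A=\phi(u)\subseteq L(u)$ and $B=\phi(v)\subseteq L(v)$ with $|A|=|B|=m$, and then to extend this partial colouring independently along each of the three internal $u$--$v$ paths. For $\Theta_{2,2,2,2t}$ the same skeleton applies with four internal paths.

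The main technical tool will be a path-extension lemma describing, for a path $w_0 w_1 \ldots w_n$ with $(2m+1)$-lists and prescribed endpoint colourings $\phi(w_0)=A$ and $\phi(w_n)=B$, exactly when this partial colouring extends to the interior. The restrictive cases are the short ones: length $2$ needs $|L(w_1)\setminus(A\cup B)|\geq m$, and length $3$ needs $|(L(w_1)\setminus A)\cup(L(w_2)\setminus B)|\geq 2m$---the latter follows from a small bipartite packing argument counting disjoint $m$-subsets that can be drawn from $L(w_1)\setminus A$ and $L(w_2)\setminus B$. For longer paths the extra internal flexibility in picking $\phi(w_1)$ and $\phi(w_{n-1})$ so as to minimise their intersection with the other internal lists yields progressively looser conditions.

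With the extension lemma in hand, the task reduces to choosing $(A,B)$ that meets the path conditions of all internal paths at once. A key observation is that $u$ and $v$ are never adjacent in our graphs, so $A$ and $B$ need not be disjoint, and the symmetric choice $A=B$ (when $L(u)\cap L(v)$ is large enough) tends to make every extension condition slack. For $\Theta_{2r,2s,2t}$ with $r,s,t>1$, all paths have even length $\geq 4$ and a symmetric or near-symmetric $(A,B)$ usually works. For $\Theta_{2r+1,2s+1,2t+1}$ with $r,s,t>0$, all paths have odd length $\geq 3$; the tight sub-case is $\Theta_{3,3,3}$, where three length-$3$ conditions bind simultaneously. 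For $\Theta_{2,2,2,2t}$, the binding constraints are the three length-$2$ conditions $|L(w_i)\setminus(A\cup B)|\geq m$ coming from the three length-$2$ paths, while the length-$2t$ path is comparatively easy.

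I expect the main obstacle to be these tight sub-cases, especially $\Theta_{3,3,3}$, where three length-$3$ extension conditions must be met simultaneously by a single pair $(A,B)\in\binom{L(u)}{m}\times\binom{L(v)}{m}$. Resolving this will require a careful case analysis based on $|L(u)\cap L(v)|$, the six internal lists, and their mutual intersections, with an explicit construction of a suitable $(A,B)$ in each sub-case; the minimal sub-case $\Theta_{2,2,2,2}$ of the third family will need an analogous treatment. Organising these many sub-cases into a coherent argument---ideally via a small number of umbrella constructions that unify them---is the combinatorial heart of the proof.
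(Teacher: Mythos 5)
Your proposal correctly identifies the skeleton that the paper also uses: pick $m$-sets $A\subseteq L(u)$, $B\subseteq L(v)$, then extend independently along each internal path, using a characterization of when a path with prescribed endpoint colours admits an $(L,m)$-colouring. Your path-extension conditions for lengths $2$ and $3$ are also correct, and the paper's Lemma~\ref{42lemma} (the $S_L(P)\geq nm$ criterion) is essentially the general form you allude to. However, the central and genuinely hard step --- showing that a single pair $(A,B)$ meets all three or four path conditions simultaneously --- is left as "careful case analysis," and this is precisely where your plan has a gap. The paper does not do explicit case analysis. Instead it linearizes the failure condition via a \emph{damage} function $dam_{L,P}(S,T)$ (Lemma~\ref{slp-dam}), fixes a consistent indexing so that candidate pairs become \emph{simple pairs} parametrized by subsets of $\{1,\dots,\ell\}$, proves a nontrivial binomial-coefficient inequality (Lemma~\ref{main-lemma}, the key lemma, whose proof occupies Section~\ref{sec-main-lemma}) bounding the number of bad simple pairs for any one path strictly below $\frac{1}{2}\binom{\ell}{m-\tau}$, and then runs an induction on $2\ell+\tau$ --- generalizing to sublists at $u,v$ with a compensating parameter $\tau$ --- combined with a heavy/light/safe classification of couples (Claims~\ref{claim:evey couple is HSL} and~\ref{claim:s-every couple is heavy for at most one path}, etc.) to reach a contradiction.

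Two concrete weaknesses in your proposal. First, your claim that the symmetric choice $A=B$ "tends to make every extension condition slack" is unsupported and, in light of the $(4,2)$-non-choosability results cited in the introduction (for $\Theta_{r,s,t}$ with $\min\{r,s,t\}\geq 3$ of equal parity and for $\Theta_{2,2,2,2t}$, $t\geq 2$), the space of admissible $(A,B)$ is genuinely tight; a straightforward symmetric or near-symmetric pick would need justification you have not given. Second, and more importantly, an explicit case analysis "based on $|L(u)\cap L(v)|$, the six internal lists, and their mutual intersections" seems infeasible for arbitrary $m$: the parameter space grows with $m$, and the paper's counting argument is exactly the device that avoids this explosion. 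Your plan is a reasonable first outline but is missing the key lemma that makes the selection of $(A,B)$ tractable; without something playing the role of Lemma~\ref{main-lemma} and the induction via the $\tau$ parameter, the proposal would not close.
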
   

Thus if $G$ is a bipartite 3-choice critical graph, then for any $r > 2$, $G$ is strongly fractional $r$-choosable. 
Hence   we have the following corollary.

\begin{corollary}
	 Every bipartite 3-choice critical graph $G$ has $ch_f^s(G)=2$.
\end{corollary}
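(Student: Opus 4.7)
The plan is a case analysis on the three graph families. Throughout let $u, v$ denote the two degree-$3$ vertices of $G$; the strategy is to choose $m$-subsets $\phi(u) \subseteq L(u)$ and $\phi(v) \subseteq L(v)$ and then extend the coloring along each $u$--$v$ path. The extendability of a path with pre-colored endpoints and $(2m+1)$-size interior lists is equivalent to a Hall-type condition on the residual-list system; I will formulate this as a preliminary lemma. Somewhat counter-intuitively, even length-$4$ paths can fail to extend for arbitrary $\phi(u), \phi(v)$: if $\phi(u), \phi(v)$ are disjoint $m$-sets and each interior list of the length-$4$ path $u, x_1, x_2, x_3, v$ equals $\phi(u) \cup \phi(v) \cup \{z\}$, then a direct case-check shows $|L(x_2) \setminus (\phi(x_1) \cup \phi(x_3))| \le 2$ for every admissible $\phi(x_1), \phi(x_3)$, ruling out $m \geq 3$. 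So the freedom in $\phi(u), \phi(v)$ must be coordinated with all three (or four) paths simultaneously.

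The three cases then read as follows. \textbf{(Case 1)} $G = \Theta_{2r,2s,2t}$ with $r, s, t \geq 2$: the tightest constraint comes from length-$4$ paths (when $\min(r,s,t) = 2$). The plan is to make $\phi(u) \cap \phi(v)$ large inside $L(u) \cap L(v)$ and, when the latter is small, place the ``uncommon'' colors of $\phi(u), \phi(v)$ outside the relevant interior lists. \textbf{(Case 2)} $G = \Theta_{2r+1,2s+1,2t+1}$ with $r, s, t \geq 1$: similar analysis, with length-$3$ paths giving the tightest constraint when $\min(r,s,t) = 1$. \textbf{(Case 3)} $G = \Theta_{2,2,2,2t}$ with $t \geq 1$: letting $w_1, w_2, w_3$ be the middles of the length-$2$ paths, the constraint on each is $|L(w_i) \cap (\phi(u) \cup \phi(v))| \leq m+1$. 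I would maximize $|\phi(u) \cap \phi(v)|$ using $L(u) \cap L(v)$; when the latter is too small, a pigeonhole on $L(w_1) \cup L(w_2) \cup L(w_3)$ allows placing the excess colors of $\phi(u) \cup \phi(v)$ away from each $L(w_i)$. The long path of length $2t$ extends automatically for $t \geq 2$ (and for $t = 1$ the graph is $\Theta_{2,2,2,2}$, handled by the same length-$2$ analysis).

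The \textbf{main obstacle} I anticipate is Case~2, because the length-$3$ path constraints on all three paths must be simultaneously satisfied, and unlike Case~3 the middle vertices of different length-$3$ paths are not decoupled (each length-$3$ path has two interior vertices whose lists interact through the Hall-type condition). The sub-case analysis on the sizes of pairwise intersections of $L(u), L(v)$ with each interior list will account for the bulk of the technical work. Case~1, once the length-$4$ analysis is settled, follows a similar but shorter argument, and Case~3 reduces to the cleaner pigeonhole optimization described above.
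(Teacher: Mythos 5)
Your proposal is a sketch of an attack on the \emph{main theorem} (that the $\Theta$-graphs in the list are $(2m+1,m)$-choosable), whereas the paper's proof of the corollary is a one-step deduction: combine Theorem~\ref{thm-main}, proved in Sections~\ref{sec-main-thm-first}--\ref{sec-main-lemma}, with the cited result that the remaining bipartite 3-choice critical graphs (two even cycles joined by a path, two even cycles sharing a vertex, $\Theta_{2,2s,2t}$, $\Theta_{1,2s+1,2t+1}$) are $(4m,2m)$-choosable, hence $(2m+1,m)$-choosable. Taken as an attempt to prove the corollary from scratch, your proposal has several genuine gaps.

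First, coverage. The corollary is about \emph{all} bipartite $3$-choice critical graphs; your three cases only cover $\Theta_{2r,2s,2t}$ with $r,s,t\ge 2$, $\Theta_{2r+1,2s+1,2t+1}$ with $r,s,t\ge 1$, and $\Theta_{2,2,2,2t}$. You do not treat two vertex-disjoint even cycles joined by a path, two even cycles sharing a vertex, $\Theta_{2,2s,2t}$ (i.e.\ $r=1$), or $\Theta_{1,2s+1,2t+1}$ (i.e.\ $r=0$). These are all bipartite $3$-choice critical, and the last two have a structurally different degenerate path (length~$2$ or~$1$) between $u$ and $v$, so they are not subsumed by your Cases~1 and~2.

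Second, the claim that ``the long path of length $2t$ extends automatically for $t\ge 2$'' in Case~3 is false. For the internal path $P^3$ on $n_3=2t-1$ vertices one needs $S_{L\ominus(S,T)}(P^3)\ge n_3m$, and the damage of an $m$-set pair can be as large as $2m$; a crude bound on $S_L(P^3)$ only guarantees automatic extension when $t\ge m$. A concrete obstruction: take $m=5$, $t=2$, and let the three lists on $P^3$ all be $\{1,\dots,11\}$; then $S_L(P^3)=22$ but, if $S$ and $T$ are disjoint $5$-sets inside $\{1,\dots,11\}$, the damage is $10$ and $S_{L\ominus(S,T)}(P^3)=12<15=n_3m$. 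So the long path must be coordinated with the short ones, exactly as the paper does through condition \ref{T5}.

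Third, and most substantively, the key step in every case is the same: one must simultaneously satisfy three or four constraints on $(S,T)$, and your plan (``make $\phi(u)\cap\phi(v)$ large; otherwise pigeonhole the excess outside the interior lists'') is not a mechanism that resolves the simultaneous constraints. The paper's proof does not choose $(S,T)$ greedily or by a local pigeonhole at all: it classifies each couple $(c_j,c'_j)$ as heavy/light/safe per path via the damage function, shows under an inductive hypothesis (Claims on ``all at least light'' / ``all at most light'' pairs) that the heavy/light/safe pattern is forced into a small number of shapes (Tables~1 and~2), and closes the argument with the sharp binomial-counting inequality of Lemma~\ref{main-lemma}, which bounds the number of bad simple pairs for each path strictly below $\tfrac12\binom{\ell}{m-\tau}$ so that some simple pair is good for all paths at once. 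Nothing in your sketch plays the role of that counting lemma, and without it the ``coordinate across all paths'' step — which you correctly identify as the crux — is unaddressed.

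In short, the proposal identifies the right local constraints (the Hall-type path lemma, the $|L(w_i)\cap(S\cup T)|\le m+1$ bound for length-$2$ paths, the difficulty of length-$3$ and length-$4$ paths), but it is missing the global counting/induction machinery that actually produces a simultaneously valid $(S,T)$, it omits several of the required graph classes, and it contains an incorrect simplifying claim about the long path in $\Theta_{2,2,2,2t}$.
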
  
 
\section{Preliminaries}

The proof of Theorem \ref{thm-main} uses the idea   in \cite{4choose2,XuZhu2020}: Assume $G$ is a graph as in Theorem \ref{thm-main} and $L$ is a  $(2m+1)$-list assignment  of $G$.  Let $u,v$ be the two vertices of $G$ of degree at least $3$. Then $G-\{u,v\}$ is the disjoint union of a family of  three or four paths,  where each end vertex of these paths has exactly one neighbour in $\{u,v\}$ unless the path consists of a single vertex $w$, in which case $w$ is adjacent to both $u$ and $v$. 
Other vertices of the paths are not adjacent to $u$ or $v$.

We shall find  appropriate $m$-sets $S \subseteq L(u)$ and $T \subseteq L(v)$, assign $S$ to $u$  and $T$  to   $v$. 
Then extend this pre-colouring of $u,v$ to an $(L, m)$-colouring of the remaining vertices of $G$, that consists of three or four paths. 
The extension to the paths are independent to each other. The difficulties lie  in  proving  the existence of such $m$-sets $S$ and $T$. 

  Assume $P$ is a path  with vertices $v_1, v_2, \ldots, v_n$ in order and  $L$ is a  $(2m+1)$-list assignment on $P$, with $v_1$ adjacent to $u$ and $v_n$ adjacent to $v$. Assume $S,T$ are the $m$-sets of colours assigned to $u,v$ respectively. 
A necessary and sufficient condition was given in \cite{4choose2} under which  $P$ has an $(L,m)$-colouring   so that $v_1$ and $v_n$  avoid the colours from $S$ and $T$.

\begin{definition} 
	\label{def-slp1}
Assume $P$ is an $n$-vertex path with vertices $v_1, v_2, \ldots, v_n$ in order.
For a list assignment $L$ of $P$, 
Let 
\begin{equation*}
\begin{array}{rl}  
X_1 =& L(v_1),   \\
X_i =& L(v_i)-X_{i-1}, i \in \{2,3,\ldots, n\},\\
S_L(P)=&\sum_{i=1}^{n}|X_i|.
\end{array}
\end{equation*}
\end{definition}

The following lemma was proved in  \cite{4choose2} (the statement is slightly different, but it does not affect the proof).

\begin{lemma}
	\label{42lemma}
	Let $P$ be an $n$-vertex path and let $L$ be a list assignment on $P$.
	If $|L(v_1)|$, $|L(v_n)| \geq m$ and $|L(v_i)|\geq 2m$ for $i \in \{2,3,\ldots, n-1\}$, then path $P$ is $(L,m)$-colourable if and only if $S_L(P) \geq nm$.
\end{lemma}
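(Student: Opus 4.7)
The plan is to prove necessity by a counting argument and sufficiency by induction on $n$.

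For necessity, suppose $\phi$ is an $(L,m)$-colouring of $P$ and for each colour $c$ put $I_c=\{i:c\in L(v_i)\}$. The set $\{i:c\in\phi(v_i)\}$ is independent in $P$ and contained in $I_c$, so its size is at most $\alpha(I_c)$, the maximum size of a subset of $I_c$ with no two consecutive integers. A short induction on $i$ using the recursion $X_i=L(v_i)\setminus X_{i-1}$ shows that within each maximal run $[a,b]$ of $I_c$ one has $c\in X_i$ iff $i-a$ is even, so $|\{i:c\in X_i\}|=\alpha(I_c)$. Summing over all colours gives $S_L(P)=\sum_c\alpha(I_c)\geq\sum_c|\{i:c\in\phi(v_i)\}|=nm$.

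For sufficiency I would induct on $n$; the cases $n=1,2$ are immediate (pick any $m$-subset of $L(v_1)$, or disjoint $m$-subsets of $L(v_1),L(v_2)$ inside $L(v_1)\cup L(v_2)$, which has $\geq 2m$ elements because $S_L(P)\geq 2m$). For $n\geq 3$ the plan is to choose $\phi(v_1)\subseteq L(v_1)$ of size $m$ so that, setting $L^*(v_2)=L(v_2)\setminus\phi(v_1)$ and $L^*(v_i)=L(v_i)$ for $i\geq 3$, the reduced path $P'=v_2\cdots v_n$ still satisfies the hypothesis, and then extend by induction. The clean sub-case is $|L(v_1)|=m$: here $\phi(v_1)=L(v_1)$ is forced, $|L^*(v_2)|\geq m$, and a direct verification gives $X_i^{L^*}=X_i$ for all $i\geq 2$, whence $S_{L^*}(P')=S_L(P)-|X_1|=S_L(P)-m\geq(n-1)m$. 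When $|L(v_1)|>m$ I would reduce to this case by a secondary induction on $|L(v_1)|$, removing a single colour $c\in L(v_1)$ at each step; removal changes $S_L(P)$ by $\alpha(I_c)-\alpha(I_c\setminus\{1\})\in\{0,1\}$, so any removal is safe in the slack regime $S_L(P)>nm$.

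The main obstacle is the tight case $S_L(P)=nm$ with both $|L(v_1)|>m$ and $|L(v_n)|>m$: one needs a colour $c\in L(v_1)$ (or, symmetrically, in $L(v_n)$) whose first (resp.\ last) maximal run in $I_c$ has even length, so that its removal from $L(v_1)$ does not decrease $S_L(P)$. I would attack this by a parity/counting argument using $|X_i|+|X_{i+1}|\geq|L(v_{i+1})|\geq 2m$ for interior $i+1$ together with $|X_1|=|L(v_1)|>m$; in the tight regime these inequalities pin down the $|X_i|$'s rigidly enough to produce a suitable colour. As a safety net, if the direct combinatorics becomes unwieldy I would recast the problem as the feasibility LP $\sum_c x_{i,c}=m$, $x_{i,c}+x_{i+1,c}\leq 1$, $0\leq x_{i,c}\leq 1$, whose constraint matrix has a consecutive-ones structure on each colour block, so that fractional feasibility (equivalent to $S_L(P)\geq nm$ by LP duality) already produces an integral colouring.
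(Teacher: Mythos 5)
The paper itself does not prove this lemma---it is imported verbatim from \cite{4choose2} (``The following lemma was proved in \cite{4choose2}\dots'')---so there is no in-paper argument to compare against; I can only assess your attempt on its own terms. Your necessity direction is correct and clean: the run-by-run parity induction does show that the contribution of each colour $c$ to $S_L(P)$ is exactly $\alpha(I_c)$, the independence number of the subpath induced on $I_c$, and summing gives $S_L(P)\ge nm$. For sufficiency, the base cases, the sub-case $|L(v_1)|=m$, and the slack regime $S_L(P)>nm$ are all handled correctly. But the tight case $S_L(P)=nm$ with $|L(v_1)|>m$ (and, after reversing the path, $|L(v_n)|>m$) is a genuine gap, and you say so yourself. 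The ``parity/counting'' sketch you gesture at does not close it: the inequalities $|X_1|=|L(v_1)|>m$ and $|X_i|+|X_{i+1}|\ge 2m$ yield at best $|L(v_1)|\le 2m$ in the tight regime, not $|L(v_1)|\le m$, so they do not by themselves produce a colour with even first run. The LP fallback is also not a proof as stated: you would have to prove both that LP feasibility is equivalent to $S_L(P)\ge nm$ (the hard direction of which is essentially the lemma) and that feasibility implies the existence of an integral point; the vertex constraints $\sum_c x_{i,c}=m$ couple every colour block, so the consecutive-ones structure within a single colour does not give total unimodularity of the whole system, and neither claim is established.

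For what it is worth, the gap is fillable with the $\alpha(I_c)$-bookkeeping you already set up, so the overall strategy is sound. Let $\hat L$ be $L$ restricted to $P'=v_2\cdots v_n$ with \emph{no} colours deleted, and let $(Y_i)_{i=2}^n$ be its $X$-sequence. Removing the index $1$ from $I_c$ decreases $\alpha(I_c)$ by $1$ exactly when $c\in L(v_1)$ and the first maximal run of $I_c$ has odd length; hence $S_L(P)-S_{\hat L}(P')$ equals the number of colours in $L(v_1)$ with odd first run. Grouping $|Y_i|+|Y_{i+1}|\ge |L(v_{i+1})|$ with $|Y_2|=|L(v_2)|\ge 2m$ as the anchor gives $S_{\hat L}(P')\ge (n-1)m$. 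Combining, the number of odd-first-run colours in $L(v_1)$ is at most $S_L(P)-(n-1)m=m<|L(v_1)|$, so some $c\in L(v_1)$ has even first run; removing it preserves $S_L(P)=nm$, and your secondary induction on $|L(v_1)|$ goes through. Without some argument of this kind, however, the proposal as written is incomplete.
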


\begin{definition} 
	\label{def-slp}
	Assume $n$ is an odd integer, $P$ is an $n$-vertex path with vertices $v_1, v_2, \ldots, v_n$ in order, and $L$ is a list assignment on $P$.
	Let 
	\begin{equation*}
	\begin{array}{ll} 
	\Lambda=&\mathop{\bigcap}\limits_{x\in V(P)}L(x) ,\\
	\hat{X}_1 =& \{c \in L(v_1)-\Lambda: \mbox{the smallest index $i$ for which $c \notin L(v_i)  $ is even} \},   \\
	\hat{X}_n =& \{c \in L(v_n)-\Lambda: \mbox{the largest index $i$ for which $c \notin L(v_i)  $ is even}\}.
	\end{array}
	\end{equation*}
\end{definition}

\begin{definition}
	Assume $L$ is a $(2m+1)$-list assignment on $P$ and $S,T$ are two colour sets.
	Let  $L\ominus(S,T)$   be the list assignment obtained 
	from $L$ by deleting all colours in $S$ from $L(v_1)$, all colours in $T$ from $L(v_n)$, and leaving all other lists unchanged.
	The damage of $(S,T)$ with respect to $L$ and $P$ is   defined as 
	$$dam_{L,P}(S,T)=S_L(P)-S_{L\ominus(S,T)}(P).$$ 
\end{definition}

The following lemma  was proved in \cite{4choose2}.

\begin{lemma}[\cite{4choose2}]
	\label{slp-dam} 
	Let $L$ be a list assignment on an $n$-vertex path $P$, where $n$ is odd. For  any sets of colours $S,T$,  
	$$S_{L\ominus(S,T)}(P)=S_L(P)-(|(\Lambda\cup\hat{X}_1)\cap S|+|(\Lambda\cup\hat{X}_n)\cap T|-|\Lambda\cap S\cap T|).$$
	and 
	$$dam_{L,P}(S,T)=|\hat{X}_1\cap S|+|\hat{X}_n\cap T|+|\Lambda\cap(S\cup T)|.$$
\end{lemma}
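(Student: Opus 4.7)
The plan is to decompose $S_L(P)$ color by color and analyze what happens when each color is removed from $L(v_1)$, from $L(v_n)$, or from both. For each color $c$, let $g_L(c) = |\{i : c \in X_i\}|$, so $S_L(P) = \sum_c g_L(c)$. The recursion $X_i = L(v_i) \setminus X_{i-1}$ together with $X_1 = L(v_1)$ implies that within each maximal run $[a,b]$ of consecutive indices $i$ with $c \in L(v_i)$, the indicator $[c \in X_i]$ takes the alternating values $1,0,1,0,\ldots$ starting with $1$ at position $a$. Hence the run contributes $\lceil (b-a+1)/2 \rceil$ to $g_L(c)$, and $g_L(c) = \sum_{\text{runs}} \lceil k/2 \rceil$ where $k$ ranges over the run lengths.

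Next I would compute $\Delta_1(c) := g_L(c) - g_{L'}(c)$, where $L'$ is obtained from $L$ by deleting $c$ from $L(v_1)$. Only the run containing position $1$ is affected; its length drops from $k$ to $k-1$, and $\lceil k/2 \rceil - \lceil (k-1)/2 \rceil$ equals $1$ exactly when $k$ is odd. For $c \in \Lambda$ this run has length $n$, which is odd by hypothesis, so $\Delta_1(c) = 1$. For $c \in L(v_1) \setminus \Lambda$, the run length is $k = j-1$, where $j$ is the smallest index with $c \notin L(v_j)$; thus $\Delta_1(c) = 1$ iff $j$ is even, i.e., iff $c \in \hat{X}_1$. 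Combining, $\Delta_1(c) = [c \in \Lambda \cup \hat{X}_1]$, and a symmetric argument gives $\Delta_n(c) = [c \in \Lambda \cup \hat{X}_n]$.

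The subtle step is the case $c \in S \cap T$, where $c$ is removed from both endpoints. If $c \notin \Lambda$ but $c \in L(v_1) \cap L(v_n)$, then the runs containing the two endpoints must be distinct (some position in between has $c \notin L(v_i)$), so the two cuts act on disjoint runs and their effects on $g_L(c)$ simply add, giving $\Delta_1(c) + \Delta_n(c)$. If $c \in \Lambda$, however, the entire interval $[1,n]$ is a single run of odd length $n$; cutting both endpoints leaves a run of length $n-2$, reducing $\lceil n/2 \rceil = (n+1)/2$ to $(n-1)/2$, a drop of only $1$ rather than $2$. This is precisely where the odd-$n$ hypothesis is essential, and it is the source of the correction term $-|\Lambda \cap S \cap T|$.

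Summing per-color contributions yields
$$S_L(P) - S_{L\ominus(S,T)}(P) = \sum_{c \in S}\Delta_1(c) + \sum_{c \in T}\Delta_n(c) - |\Lambda \cap S \cap T| = |(\Lambda \cup \hat{X}_1)\cap S| + |(\Lambda \cup \hat{X}_n)\cap T| - |\Lambda \cap S \cap T|,$$
which is the first identity. The second follows mechanically: since $\hat{X}_1$ and $\hat{X}_n$ are disjoint from $\Lambda$ by definition, $|(\Lambda \cup \hat{X}_1)\cap S| = |\Lambda \cap S| + |\hat{X}_1 \cap S|$ and similarly for $T$, and $|\Lambda \cap S| + |\Lambda \cap T| - |\Lambda \cap S \cap T| = |\Lambda \cap (S \cup T)|$ by inclusion–exclusion. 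The main obstacle is keeping the run bookkeeping straight at the two endpoints in the $c \in S \cap T$ case; once the parity behaviour of $\lceil k/2 \rceil$ is isolated, everything else is direct.
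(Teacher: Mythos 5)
The paper does not reprove Lemma \ref{slp-dam}; it cites it from \cite{4choose2}, so there is no in-paper proof to compare against. Your colour-by-colour decomposition via runs is correct and is, as far as I can tell, the natural argument: the run $[a,b]$ contributes $\lceil (b-a+1)/2\rceil$ to $g_L(c)$ because the indicator $[c\in X_i]$ restarts at $1$ at the left end of every run; deleting $c$ from $L(v_1)$ shortens only the run containing index $1$ (all later runs restart independently), giving $\Delta_1(c)=1$ exactly when that run has odd length, which matches $c\in\Lambda\cup\hat{X}_1$ using that $n$ is odd; the same parity bookkeeping at the right end gives $\Delta_n(c)=1$ iff the run $[a,n]$ has odd length, i.e.\ $a$ odd, i.e.\ $c\in\Lambda\cup\hat{X}_n$; and you correctly isolate the only case of interaction, $c\in\Lambda\cap S\cap T$, where the single run $[1,n]$ drops from $(n+1)/2$ to $(n-1)/2$, a loss of $1$ rather than $2$, which is exactly the correction term. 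The derivation of the $dam$ formula from the first identity via $\hat{X}_1,\hat{X}_n$ being disjoint from $\Lambda$ and inclusion--exclusion on $\Lambda\cap S$, $\Lambda\cap T$ is also correct. One small point worth stating explicitly for completeness: when $c\in L(v_1)\cap L(v_n)\setminus\Lambda$, the witness index $i$ with $c\notin L(v_i)$ satisfies $2\le i\le n-1$, which is what forces the two endpoint runs to be distinct; you assert this but it is the one place a reader might pause.
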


\begin{lemma}
	\label{slp-equal}
	Let $L$ be a list assignment on an $n$-vertex path $P$ with vertices $v_1,v_2,\ldots,v_n$, where $n \geq 3$ is odd, $|L(v_1)|=l_1$ and $|L(v_i)|=l_2$ for all $i \geq 2$. Then 
	$$S_{L}(P)= l_1+\frac{n-3}{2}l_2 + \sum\limits_{\substack{k \ is \ even \\ k < n}}   |X_{k-1}-L(v_{k})|+|X_n|.$$ 
\end{lemma}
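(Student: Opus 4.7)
The plan is to prove the formula by a pair-summation argument built on a single inclusion–exclusion identity. The central observation, valid for every $i\ge 2$, is
$$|X_{i-1}|+|X_i|=|L(v_i)|+|X_{i-1}-L(v_i)|.$$
This follows by decomposing $X_{i-1}$ as the disjoint union $(X_{i-1}\cap L(v_i))\cup(X_{i-1}-L(v_i))$ and combining with the defining identity $|X_i|=|L(v_i)-X_{i-1}|=|L(v_i)|-|X_{i-1}\cap L(v_i)|$ from Definition~\ref{def-slp1}; adding the two expressions gives the claim.

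I would then apply this identity with $i$ ranging over the even values $k\in\{2,4,\ldots,n-1\}$, which are exactly the indices appearing in the displayed sum. Because $n\ge 3$ is odd, the pairs $(|X_{k-1}|,|X_k|)$ obtained this way partition $\{|X_1|,|X_2|,\ldots,|X_{n-1}|\}$ into $\tfrac{n-1}{2}$ disjoint consecutive blocks. Summing the $\tfrac{n-1}{2}$ instances of the identity collapses the left-hand side to $\sum_{i=1}^{n-1}|X_i|$, while the right-hand side contributes $\tfrac{n-1}{2}\,l_2$ together with the sum $\sum_{k\text{ even},\,k<n}|X_{k-1}-L(v_k)|$.

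To finish, I would add $|X_n|$ to both sides, which turns the left side into $S_L(P)$, and then rearrange the leading constant by using $|X_1|=l_1$ to split off the contribution of the first pair separately from the remaining $\tfrac{n-3}{2}$ pairs, thereby rewriting the constant in the form $l_1+\tfrac{n-3}{2}\,l_2$ as stated. The only really delicate step is the parity bookkeeping: one must verify that the pairing $(X_1,X_2),(X_3,X_4),\ldots,(X_{n-2},X_{n-1})$ covers exactly the indices $1$ through $n-1$, which is where the hypothesis that $n$ is odd is essential. Beyond this indexing check and the single line of set-algebra used to establish the core identity, the argument is routine.
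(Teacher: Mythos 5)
Your core identity $|X_{i-1}|+|X_i|=|L(v_i)|+|X_{i-1}-L(v_i)|$ is correct, and applying it to the pairs $(X_1,X_2),(X_3,X_4),\ldots,(X_{n-2},X_{n-1})$ and adding $|X_n|$ does give, as you write,
$$S_L(P)=\frac{n-1}{2}\,l_2+\sum_{\substack{k\ \mathrm{even}\\ k<n}}|X_{k-1}-L(v_k)|+|X_n|.$$
The problem is your final step. You assert that $\frac{n-1}{2}l_2$ can be ``rearranged'' into $l_1+\frac{n-3}{2}l_2$ by ``splitting off the first pair,'' but these two expressions differ by $l_2-l_1$, which is nonzero whenever $l_1\neq l_2$. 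If you decline to apply the core identity to the first pair and instead write $|X_1|+|X_2|=l_1+|X_2|$, then to land on the claimed formula you would need $|X_2|=|X_1-L(v_2)|$; but $|X_2|=|L(v_2)-X_1|=l_2-|X_1\cap L(v_2)|$ while $|X_1-L(v_2)|=l_1-|X_1\cap L(v_2)|$, and these agree if and only if $l_1=l_2$. So the step you describe as a routine bookkeeping rearrangement is precisely where the argument breaks, and no amount of reindexing fixes it.

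For what it's worth, your honest derivation shows something the paper glosses over: the displayed equality in the lemma is only correct when $l_1=l_2$, and the paper's base case ($n=3$, claimed ``trivial'') fails for exactly the same reason, since it amounts to the same assertion $|X_2|=|X_1-L(v_2)|$. When the lemma is later invoked with $l_1=2m<2m+1=l_2$, the stated right-hand side is smaller than the true $S_L(P)$ by $l_2-l_1$, so the downstream lower bounds (Lemma~\ref{first-lower-bound-for-slp} and its corollaries) remain valid, just not sharp. Your approach (direct pairwise summation) is a cleaner route than the paper's induction and, carried out without the spurious rearrangement, would actually yield the corrected identity with leading term $\frac{n-1}{2}l_2$. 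But as a proof of the lemma as literally stated, it has a genuine gap.
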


\begin{proof}
	We use induction on $n$. If $n=3$, then the lemma holds trivially. Assume   $n \geq 5$. Let $P'=P-\{v_{n-1},v_n\}$ and let $L'$ be the restriction of $L$ to $P'$, hence
	\begin{equation}
	\label{slp-proof1}
	S_{L}(P)= l_1+\frac{n-5}{2}l_2 + \sum\limits_{\substack{k \ is \ even \\ k < n-2}}|X_{k-1}-L(v_{k})|+|X_{n-2}|+|X_{n-1}|+|X_n|.
	\end{equation}
	Note that 
	\begin{align}
	\label{xn-1} 
	|X_{n-1}| & = |L(v_{n-1})-X_{n-2}| \nonumber \\ 
	& = |L(v_{n-1})|-|X_{n-2}|+|X_{n-2}- L(v_{n-1})| \nonumber \\
	& = l_2 -|X_{n-2}|+|X_{n-2}- L(v_{n-1})|.
	\end{align}
	Combining Equality (\ref{slp-proof1}) and Equality (\ref{xn-1}), we complete the proof.
\end{proof}
 
\begin{lemma}
	\label{first-lower-bound-for-slp}
	Let $L$ be a list assignment on an $n$-vertex path $P$ with vertices $v_1,v_2,\ldots,v_n$, where $n \geq 3$ is odd, $|L(v_1)|=l_1$ and $|L(v_i)|=l_2$ for all $i \geq 2$. Then
	$$S_{L}(P)\geq l_1+\frac{n-3}{2}l_2 + |\hat{X}_1|+|\hat{X}_n|+|\Lambda|.$$  
\end{lemma}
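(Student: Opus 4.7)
The plan is to combine Lemma~\ref{slp-equal} with a per-color accounting. After substituting, it suffices to prove
\[
\sum_{\substack{k\text{ even}\\ k<n}} |X_{k-1}-L(v_{k})| + |X_n| \;\ge\; |\hat{X}_1| + |\hat{X}_n| + |\Lambda|,
\]
which I would do by tracking, for each color $c$, which sets $X_i$ it lies in, using the recursion $X_i = L(v_i)-X_{i-1}$.

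First I handle the colors contributing to $|X_n|$. For $c \in \Lambda$, an easy induction shows $c \in X_i$ iff $i$ is odd; since $n$ is odd, $c \in X_n$. For $c \in \hat{X}_n$, let $2j'$ be the largest (necessarily even) index with $c \notin L(v_{2j'})$. Then $c \notin X_{2j'}$, and for every $i > 2j'$ we have $c \in L(v_i)$, so induction on $i$ gives $c \in X_i$ iff $i - 2j'$ is odd, i.e.\ iff $i$ is odd; hence $c \in X_n$. Since $\Lambda$ and $\hat{X}_n$ are disjoint by definition, this yields $|X_n| \ge |\Lambda| + |\hat{X}_n|$.

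Next I handle $\hat{X}_1$. For $c \in \hat{X}_1$, let $2j$ be the smallest (even) index with $c \notin L(v_{2j})$; for $1 \le i \le 2j-1$ we have $c \in L(v_i)$, and induction gives $c \in X_i$ iff $i$ is odd. In particular $c \in X_{2j-1}$ and $c \notin L(v_{2j})$, so $c$ is counted in $|X_{2j-1}-L(v_{2j})|$. Because $n$ is odd and $2j$ is even, $2j \le n-1$, so the index $k=2j$ truly appears in the even-index sum; summing over $c \in \hat X_1$ yields $\sum_{k \text{ even},\, k<n} |X_{k-1}-L(v_{k})| \ge |\hat{X}_1|$. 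Adding the two bounds gives the claim.

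I do not anticipate a serious obstacle; the only place that needs care is bookkeeping for a color $c \in \hat{X}_1 \cap \hat{X}_n$, which should contribute $+1$ to the even sum (at $k = 2j$) and a separate $+1$ to $|X_n|$ (via the parity argument past $2j'$). Since these are distinct terms on the right-hand side of Lemma~\ref{slp-equal}, no double counting occurs, and the two lower bounds add cleanly.
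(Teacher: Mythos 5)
Your proof is correct and follows essentially the same route as the paper's: invoke Lemma~\ref{slp-equal} and then argue that each colour of $\hat{X}_1$ contributes to the even-indexed sum $\sum_{k\text{ even}} |X_{k-1}-L(v_k)|$ while $X_n \supseteq \hat{X}_n \cup \Lambda$ (in fact the paper uses the equality $|X_n| = |\hat{X}_n| + |\Lambda|$, but the inequality you prove suffices). You simply spell out the parity-induction bookkeeping that the paper's two-line proof leaves implicit.
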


\begin{proof}
	By the definition of $\hat{X}_1$, every element of $\hat{X}_1$ appears in a set of the form $X_{k-1}-L(v_k)$ where $k$ is even. By Lemma \ref{slp-equal} and the fact that $|X_n|=|\hat{X}_n|+|\Lambda|$, the lemma holds.
\end{proof}

\begin{lemma}
	\label{second-lower-bound-for-slp}
	Let $L$ be a list assignment on an $n$-vertex path $P$ with vertices $v_1,v_2,\ldots,v_n$, where $n \geq 3$ is odd, $|L(v_1)|=l_1$ and $|L(v_i)|=l_2$ for all $i \geq 2$, then $S_L(P) \geq l_1+\frac{n-1}{2}l_2$. 
\end{lemma}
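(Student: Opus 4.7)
The plan is to give a short, direct pairing argument. The key observation is that for any consecutive indices $i, i+1$ with $i \ge 1$, we have
\[
|X_i| + |X_{i+1}| \;\ge\; |L(v_{i+1})|,
\]
because $|X_{i+1}| = |L(v_{i+1})| - |L(v_{i+1}) \cap X_i|$ and $|L(v_{i+1}) \cap X_i| \le |X_i|$. When $i+1 \ge 2$ we have $|L(v_{i+1})| = l_2$, so $|X_i| + |X_{i+1}| \ge l_2$.

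Now I would exploit that $n$ is odd. The index set $\{2, 3, \ldots, n\}$ has $n-1$ (even) elements, which I pair consecutively as
\[
(2,3),\ (4,5),\ \ldots,\ (n-1,n),
\]
giving exactly $\tfrac{n-1}{2}$ disjoint pairs. Applying the displayed inequality to each pair yields
\[
\sum_{i=2}^{n} |X_i| \;=\; \sum_{k=1}^{(n-1)/2} \bigl(|X_{2k}| + |X_{2k+1}|\bigr) \;\ge\; \frac{n-1}{2}\, l_2.
\]
Adding $|X_1| = l_1$ gives $S_L(P) \ge l_1 + \tfrac{n-1}{2} l_2$.

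There is really no obstacle here: the only subtle point is making sure the pairing of indices $\{2,\ldots,n\}$ is clean, which works precisely because $n$ is odd so that $n-1$ is even. (If one tried to also include index $1$ in a pair $(1,2)$, one would only get a lower bound of $l_2$ for that pair, losing the benefit when $l_1 > l_2$; keeping $|X_1| = l_1$ separate is what makes the bound tight.)
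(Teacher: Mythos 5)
Your proof is correct and is essentially the same argument the paper gives, just spelled out in more detail: the paper also observes $|X_1| = l_1$ and $|X_i| + |X_{i+1}| \ge l_2$ and pairs the remaining $n-1$ indices, leaving the pairing and the role of $n$ being odd implicit. Your explicit pairing and the remark about keeping $|X_1|$ unpaired to retain the $l_1$ term are a clean elaboration of the same idea.
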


\begin{proof}
	Since $|X_1|=|L(v_1)|=l_1$ and that $|X_i|+|X_{i+1}|\geq l_2$ for $i  \geq 2$, so by the definition that $S_L(P)=\sum_{i=1}^{n}|X_i|$, the lemma holds.
\end{proof}

The following is a key lemma for the proof in this paper. It generalizes Lemma 9 in \cite{XuZhu2020}, which is a special case where $\ell$ and $k=2m-\tau$ are even.
\begin{lemma}
	\label{main-lemma}
	Let $\ell$ and $k$ be fixed  integers,  where $k \geq 1$, $\ell > k$, $0 \leq \tau \leq m$. Assume $x,y$ are non-negative integers with $x+y \le \ell$. 
	Let 
	$$F(x,y)=\sum \binom{x}{a}\binom{y}{b}\binom{\ell-x-y}{k-a-b},$$
	where  the summation is over all  non-negative integer pairs $(a,b)$ for which  $0 \leq a \leq x$, $0 \leq b \leq y$, $a+b \leq k$ and  $2a+b \geq \max\{2x+y+k+1-\ell, k+1\}$. Then 
	$$ F(x,y) \leq  \frac{1}{2}\binom{\ell}{k},$$
and the equality holds if and only if $\ell$ is even and $k$ is odd, $x=\frac{\ell}{2}$ and $y=0$.
\end{lemma}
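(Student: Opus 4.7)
The plan is to reinterpret $F(x,y)$ combinatorially and reduce the bound to a swap inequality between $X$ and $Z$. Setting $z = \ell - x - y$, view $[\ell]$ as a disjoint union $X \sqcup Y \sqcup Z$ with $|X|=x$, $|Y|=y$, $|Z|=z$. Then $F(x,y)$ counts the $k$-subsets $A$ whose intersection sizes $a=|A\cap X|,\ b=|A\cap Y|,\ c=|A\cap Z|$ (with $a+b+c=k$) obey $2a+b \ge \max\{2x+y+k+1-\ell,\ k+1\}$. A direct substitution $b=k-a-c$ shows $2a+b \ge k+1 \iff a>c$ and $2a+b \ge 2x+y+k+1-\ell \iff c' > a'$, where $a' = x-a$ and $c' = z-c$ are the complementary counts. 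Since $c'-a' = (a-c)+(z-x)$, the condition collapses to $a>c$ when $z \ge x$ and to $c' > a'$ when $z < x$.

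The $z<x$ case is handled by passing to $B = A^c$, swapping the labels $X \leftrightarrow Z$ and replacing $k$ by $\ell-k$, which reduces it to the $z \ge x$ case with the same bound $\tfrac{1}{2}\binom{\ell}{k}$. For $z \ge x$, the core step is the pointwise swap inequality
\[
\binom{x}{a}\binom{z}{c} \le \binom{x}{c}\binom{z}{a}\qquad\text{for } a > c,\ z \ge x,
\]
whose ratio factors as $\prod_{j=c+1}^{a}\dfrac{x-j+1}{z-j+1}$, a product of factors all at most $1$. Multiplying by $\binom{y}{b}$ and summing over $a+b+c = k$ with $a>c$, then relabelling $(a,c) \leftrightarrow (c,a)$ on the right, gives $\#\{A: a>c\} \le \#\{A: a<c\}$. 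Combined with the partition $\binom{\ell}{k} = \#\{a>c\} + \#\{a=c\} + \#\{a<c\}$, this yields $F(x,y) \le \tfrac{1}{2}\bigl(\binom{\ell}{k}-\#\{a=c\}\bigr) \le \tfrac{1}{2}\binom{\ell}{k}$.

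The equality analysis is the main obstacle. When $z>x$, each factor of the ratio is strictly less than $1$, so the swap inequality is strict on any nonempty term with $a>c$; if no such term exists then $F(x,y) = 0 < \tfrac{1}{2}\binom{\ell}{k}$. Either way, equality forces $x = z$ (so $\ell = 2x + y$) and $\#\{A:a=c\}=0$. A parity check on the admissible range $b \in [\max(0,k+y-\ell),\ \min(y,k)]$ with $b \equiv k \pmod 2$ (which parametrises $a = c = (k-b)/2$) shows the range always contains at least two consecutive integers, hence both parities, whenever $y \ge 1$, producing a valid $A$ with $a=c$. So $y=0$ is forced, giving $x = \ell/2$ and $\ell$ even, and then the sole candidate $b=0$ yields $a=c$ iff $k$ is even, which must be excluded. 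Thus equality holds exactly when $\ell$ is even, $k$ is odd, $x = \ell/2$ and $y = 0$; the complementary reduction produces no extra equalities, since that would require $z = \ell/2$ with $y=0$, forcing $x=z$ and contradicting $z<x$.
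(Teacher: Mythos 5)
Your proof is correct and, notably, takes a genuinely different and arguably cleaner route than the paper.

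The paper proceeds analytically: it invokes a monotonicity lemma (imported from an earlier paper) to reduce to the boundary cases $y = \ell - 2x$ (when $x \leq \lfloor\ell/2\rfloor$) and $y = 0$ (when $x \geq \lceil\ell/2\rceil$); in the first case it exploits the symmetry $C(t,x) = C(2k-t,x)$ of a Vandermonde-type convolution to obtain $F(x,\ell-2x) = \tfrac{1}{2}\bigl(\binom{\ell}{k} - C(k,x)\bigr)$ and then shows $C(k,x) \geq 0$, while the second case needs a separate telescoping computation comparing $F(x,0)$ with $F(x+1,0)$ and, for odd $\ell$, a further hand-off to $F(\frac{\ell-1}{2},1)$. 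You instead read $F(x,y)$ as a count of $k$-subsets $A$ of a partitioned ground set $X \sqcup Y \sqcup Z$, observe that the pair of constraints $2a+b \geq k+1$ and $2a+b \geq 2x+y+k+1-\ell$ collapses (via $a+b+c=k$) to the single majority condition $|A \cap X| > |A \cap Z|$ (or its complementary form, according to whether $z \geq x$), and then deduce the bound from the elementary swap inequality $\binom{x}{a}\binom{z}{c} \le \binom{x}{c}\binom{z}{a}$ when $a>c$ and $z \ge x$. The partition $\binom{\ell}{k} = \#\{a>c\} + \#\{a=c\} + \#\{a<c\}$ then delivers the bound at once, and the equality characterisation drops out from exactly where the swap is strict (namely $z > x$) together with a short parity check on the $a = c$ terms. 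Your approach is self-contained (no external monotonicity lemma), treats the two regimes of $x$ symmetrically via complementation, and makes the source of the factor $\tfrac12$ transparent.

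One small imprecision in your equality analysis: the claim that the admissible range of $b$ "always contains at least two consecutive integers whenever $y \geq 1$" fails in the corner case $x = 0$ (the range may be the singleton $\{k\}$). The conclusion you need still holds there, because $b = \max(0, k-2x)$ always has the parity of $k$ when $k \geq 2x$, and when $k < 2x$ the range $[0,\min(y,k)]$ does contain $\{0,1\}$; in either situation a valid $b \equiv k \pmod 2$ exists, so $\#\{a=c\}>0$ as soon as $y \geq 1$. This is a one-line fix and does not affect the correctness of the argument.
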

 
Note that when $a > x$ or $b > y$, then ${x \choose a}{y \choose b} = 0$.  Also $a+b \le k$ and 
$2a+b \geq  2x+y+k+1-\ell$ implies that 
$2x+y \le \ell -k -1+ 2a+b \leq \ell+k -1$. 
Thus the summation can be restricted to $0 \le a \le x, 0 \le b \le y$, $a+b \leq k$, $2a+b \geq \max\{2x+y+k+1-\ell, k+1\}$ and $2x+y \le \ell +k-1$. The proof of Lemma \ref{main-lemma} will be given in  Section \ref{sec-main-lemma}. 

\begin{observation}
	\label{obs:about main lemma}
	If the restriction on $2a+b$ is replaced by $2a+b \geq \max\{2x+y+k+1-\ell, k+2\}$ in Lemma \ref{main-lemma}, then we have $F(x,y) < \frac{1}{2}\binom{\ell}{k}$.
\end{observation}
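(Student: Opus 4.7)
The plan is to reformulate $F(x,y)$ as counting weighted $k$-subsets and reduce the inequality to a term-by-term binomial comparison. Set $z=\ell-x-y$ and substitute $c=k-a-b$, so the sum ranges over triples $(a,b,c)$ with $a+b+c=k$, $0\le a\le x$, $0\le b\le y$, $0\le c\le z$. Since $2a+b=k+(a-c)$, the threshold condition becomes $a-c\ge d_0$ where $d_0=\max\{1,\,x-z+1\}$; note that this condition does not involve $b$. Summing first on $b$ and using Vandermonde,
\[
F(x,y)=\sum_{b}\binom{y}{b}\,G(k-b),\qquad \binom{\ell}{k}=\sum_{b}\binom{y}{b}\binom{x+z}{k-b},
\]
where $G(s)=\sum_{a+c=s,\,a-c\ge d_0}\binom{x}{a}\binom{z}{c}$. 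Hence it suffices to prove the pointwise bound $G(s)\le \tfrac12\binom{x+z}{s}$ for every relevant $s$.

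First treat Case A, $x\le z$, so $d_0=1$ and $G(s)=\sum_{a>s/2}\binom{x}{a}\binom{z}{s-a}$. The key estimate is that for $a<s/2$,
\[
\binom{x}{a}\binom{z}{s-a}\ \ge\ \binom{x}{s-a}\binom{z}{a},
\]
which I would deduce from the monotonicity of $\binom{z}{k}/\binom{x}{k}$: the ratio of consecutive values is $(z-k)/(x-k)\ge 1$ when $x\le z$, so $\binom{z}{k}/\binom{x}{k}$ is non-decreasing in $k$, giving the claimed inequality by taking $k=a$ versus $k=s-a$. Pairing terms $a\leftrightarrow s-a$ yields $\sum_{a>s/2}T_a\le\sum_{a<s/2}T_a$, and combining with $\sum_a T_a=\binom{x+z}{s}$ and the non-negative middle term (when $s$ is even) delivers $G(s)\le\tfrac12\binom{x+z}{s}$. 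For Case B, $x>z$, the substitution $a'=x-a,\ c'=z-c$ flips the condition $a-c\ge x-z+1$ into $c'-a'\ge 1$, reducing this case to Case A with the roles of the two groups interchanged.

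The main obstacle is the equality analysis, since we need to track when the pointwise bound is attained for \emph{every} $b$ that actually contributes. The term-by-term estimate becomes equality precisely when $(z-i)/(x-i)\equiv 1$ on the relevant range, forcing $x=z$; and even then $G(s)=\tfrac12\binom{x+z}{s}$ requires $s$ odd so that no unpaired middle term $T_{s/2}$ appears. For Case B one has $x>z$, so equality is impossible there; for Case A, $x=z$ means $\ell=2x$ and $y=\ell-2x=0$. Under $y=0$ only $b=0$ occurs in the outer sum, giving $s=k$, which must be odd. This matches the conclusion exactly; for $y\ge 1$ the outer sum contains at least two consecutive values of $b$ (hence $s$ of both parities) with non-trivial binomial weight, so at least one factor $G(s)$ is strictly less than $\tfrac12\binom{x+z}{s}$ and the inequality becomes strict. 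Observation~\ref{obs:about main lemma} follows from the same argument, since replacing $k+1$ by $k+2$ strictly excludes the equality-attaining terms with $a-c=1$.
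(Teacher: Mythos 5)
Your argument takes a genuinely different route from the paper's. The paper proves Observation~\ref{obs:about main lemma} as a short corollary of Lemma~\ref{main-lemma}: outside that lemma's equality case, $H(x,y)\le F(x,y)<\tfrac12\binom{\ell}{k}$ is immediate; in the equality case ($\ell$ even, $k$ odd, $x=\ell/2$, $y=0$), the raised threshold deletes precisely the one positive term $\binom{\ell/2}{(k+1)/2}\binom{\ell/2}{(k-1)/2}$, giving strict inequality. You instead re-derive Lemma~\ref{main-lemma} itself via the substitution $c=k-a-b$, $z=\ell-x-y$ (turning the threshold into $a-c\ge d_0$), a Vandermonde decomposition $F(x,y)=\sum_b\binom{y}{b}G(k-b)$, and a symmetric pairing of $T_a$ with $T_{s-a}$ to prove the pointwise bound $G(s)\le\tfrac12\binom{x+z}{s}$. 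That is a clean, more unified alternative to the paper's Section~\ref{sec-main-lemma} argument, which splits into two cases around $x=\lfloor\ell/2\rfloor$ and invokes the monotonicity Lemma~\ref{monotonous}; your version also yields sharper term-by-term information. The trade-off is that it is far more work than the intended deduction, which needs only the already-proved Lemma~\ref{main-lemma} as a black box.

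Two places should be tightened. Your closing claim that ``replacing $k+1$ by $k+2$ strictly excludes the equality-attaining terms with $a-c=1$'' is only meaningful in Case~A ($x\le z$); when $x>z$ one has $\max\{x-z+1,2\}=x-z+1$, so the threshold is unchanged and there are no $a-c=1$ terms to exclude. In that regime strictness must instead come from $x\ne z$ already breaking your pointwise equality, which your Case~B reduction does give, but the mechanism you state does not apply there. Second, the equality analysis underlying $G(s)=\tfrac12\binom{x+z}{s}\Rightarrow x=z,\ s$ odd needs its boundary cases made explicit: values of $s$ with $\binom{x+z}{s}=0$ attain equality vacuously but carry zero weight in the Vandermonde expansion of $\binom{\ell}{k}$, so they can be ignored; $s$ even with $s\le 2\min\{x,z\}$ has $T_{s/2}>0$, forcing strictness; and $s$ even with $s>2x$ gives $G(s)=0<\tfrac12\binom{x+z}{s}$ whenever $\binom{x+z}{s}>0$. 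As written, the phrase ``forcing $x=z$'' absorbs all of this silently, and your appeal to two consecutive contributing $b$ when $y\ge1$ implicitly relies on $\ell>k$, which should be stated. These gaps are repairable, but the proof as written does not fully close.
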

\begin{proof}
	It suffice to prove the observation holds when $\ell$ is even and $k$ is odd, $x=\frac{\ell}{2}$ and $y=0$. Any other case directly follows from Lemma \ref{main-lemma}. Let $H(x,0)$ be the new function which is same as $F(x,0)$ except that $2a+b \geq \max\{2x+y+k+1-\ell, k+2\} \geq k+2$.  So $H(x,0)= F(x,0)-\binom{\frac{\ell}{2}}{\frac{k+1}{2}}\binom{\frac{\ell}{2}}{\frac{k-1}{2}} < \frac{1}{2}\binom{\ell}{k}$.
\end{proof}

\section{Proof of Theorem \ref{thm-main} for $\Theta_{2r,2s,2t}$ and $\Theta_{2r+1,2s+1,2t+1}$}
\label{sec-main-thm-first}
 
Let  $G = \Theta_{2r,2s,2t}$, where $r,s,t > 1$. Let $u,v$ be the two degree $3$ vertices. Let $P^0, P^1, P^2$ be the   paths in $G-\{u, v\}$, where  $P^i = (v_1^i, v_2^i, \ldots, v_{n_i}^i)$,  $v_1^i$ is adjacent to $u$ and $v_{n_i}^i$ is adjacent to $v$. 

For the purpose of using induction, instead of proving Theorem  \ref{thm-main} directly, we shall prove a stronger result, where the list  assignment $L$ does not assign $2m+1$ colours to every vertex.  In particular, $|L(u)| = |L(v)|= \ell$, where $0 \le \ell \le 2m$.

\begin{definition}
	\label{bad simple pair}
	For a fixed indexing of $L(u)$ and $L(v)$, a {\rm couple} is a tuple of the form $(c_j,c'_j)$ for $j\in\{1,2,\ldots,\ell \}$. When we write a couple, we suppress the parentheses and simply write $c_jc'_j$. A {\rm pair} is a tuple $(S,T)$ with $S\subset L(u)$, $T\subset L(v)$, and  $|S|=|T|$. We define the \emph{size} of a pair as $|S|$. 
 	A  pair $(S,T)$ is {\rm bad with respect to $(L,P)$} if $dam_{L,P}(S,T) > S_{L}(P) - m|V(P)|$. 
	A {\rm simple pair} is a pair $(S,T)$ such that $S \cap (L(v)-T) \cap \Lambda= \emptyset$ and $T  \cap (L(u)-S) \cap \Lambda = \emptyset$.  
\end{definition}

\begin{definition}
 An indexing of colours in $ L(u)$ and $L(v)$ as $L(u)=\{c_1, c_2, \ldots, c_{\ell} \}$ and $L(v)=\{c'_1,c'_2, \ldots, c'_{\ell}\}$  is {\em consistent} if  $c_j=c'_j$ whenever $c_j \in L(u)\cap L(v)$. In other words, $\{c_i,c'_i\}\cap \{c_j,c'_j\}=\emptyset$ whenever $i\neq j$. 
\end{definition} 
 
It is easy to see that  $(S,T)$ is a simple pair if there is a consistent indexing $ L(u)=\{c_1, c_2, \ldots, c_{\ell} \}$ and $L(v)=\{c'_1,c'_2, \ldots, c'_{\ell}\}$ of colours in $L(u)$ and $L(v)$ such that    $T  = \{c'_i: c_i \in S\}$.
For convenience,  in the sequel,  we shall fix a consistent indexing of colours in $L(u)$ and $L(v)$.

\begin{observation}
	\label{obs-simple pair}
	If $S \subseteq L(u)$ and $T=\{c'_j: c_j \in S\}$, then $(S,T)$ is a simple pair (most simple pairs below are of this form).
	If $c_1c'_1$ and $c_2c'_2$ are two couples satisfying $\{c_1\} \cap \{c'_1\} \cap \Lambda =\emptyset$ and $\{c_2\} \cap \{c'_2\} \cap \Lambda =\emptyset$, then both $(c_1,c'_2)$ and $(c_2,c'_1)$ are simple pairs. If $(S_1,T_1)$ and $(S_2,T_2)$ are two simple pairs, where $S_1 \cap S_2 = \emptyset$ and $T_1 \cap T_2=\emptyset$, then $(S_1\cup S_2, T_1 \cup T_2)$ is also a simple pair. 
\end{observation}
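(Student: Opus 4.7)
The proof plan is to verify each of the three assertions directly from the definition of a simple pair, leaning on a single working principle: under a consistent indexing, a colour $c$ lies in $L(u)\cap L(v)$ if and only if there is some index $j$ with $c=c_j=c'_j$. In particular every element of $L(u)\cap L(v)\cap\Lambda$ is ``self-paired''. This is the only conceptual ingredient I would invoke; everything else is bookkeeping.

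For the first assertion, I would argue by contradiction. Suppose $c\in S\cap (L(v)-T)\cap\Lambda$. Then $c\in L(u)\cap L(v)$, so $c=c_j=c'_j$ for some $j$; since $c_j\in S$, the definition $T=\{c'_j:c_j\in S\}$ forces $c=c'_j\in T$, contradicting $c\notin T$. The symmetric inclusion $T\cap (L(u)-S)\cap\Lambda=\emptyset$ follows by interchanging the roles of $u$ and $v$.

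For the second assertion, specialised to $(S,T)=(\{c_1\},\{c'_2\})$: if $c_1\in (L(v)-\{c'_2\})\cap\Lambda$, then $c_1\in L(u)\cap L(v)$, so consistency forces $c_1=c'_1$, whence $c_1\in\{c_1\}\cap\{c'_1\}\cap\Lambda$, contradicting the hypothesis on the couple $c_1c'_1$. The other side of the simplicity condition uses the analogous hypothesis on $c_2c'_2$, and the verification for $(c_2,c'_1)$ is identical after swapping indices.

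For the third assertion, I would localise the containment. Given $c\in(S_1\cup S_2)\cap(L(v)-(T_1\cup T_2))\cap\Lambda$, pick $i\in\{1,2\}$ with $c\in S_i$. Because $c\notin T_1\cup T_2$, in particular $c\notin T_i$, and since $c\in L(v)\cap\Lambda$, we conclude $c\in S_i\cap(L(v)-T_i)\cap\Lambda$, contradicting simplicity of $(S_i,T_i)$. The $T$-side inclusion follows by the symmetric argument. I do not foresee any real obstacle: the observation is a short bookkeeping lemma whose entire content is the translation of consistent indexing into set-theoretic language.
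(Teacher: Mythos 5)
Your proof is correct and follows the same direct bookkeeping the paper implicitly intends: the observation is stated without proof, preceded only by the remark that a simple pair arises from taking $T=\{c'_i:c_i\in S\}$ under a consistent indexing, and your argument is exactly the translation of that remark into a verification against the definition. All three cases check out: for the first, your reduction via consistency from $c\in L(u)\cap L(v)\cap\Lambda$ to $c=c_j=c'_j$ is precisely what is needed (and the symmetric inclusion is handled the same way); for the second, the hypothesis $\{c_i\}\cap\{c'_i\}\cap\Lambda=\emptyset$ is used exactly where it must be; and for the third, your localisation to a single $i$ is the right move. One minor remark you could add, though it is not a gap: the disjointness hypotheses $S_1\cap S_2=\emptyset$, $T_1\cap T_2=\emptyset$ in the third assertion are not needed for the simplicity condition itself, but they are what guarantee $|S_1\cup S_2|=|T_1\cup T_2|$, so that $(S_1\cup S_2,T_1\cup T_2)$ is a \emph{pair} in the sense of Definition \ref{bad simple pair}.
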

 
The following lemma follows directly from Lemma \ref{slp-dam}.

\begin{lemma}
	\label{lem-sum}
	If  $(S_1, T_1)$ and $(S_2, T_2)$ are two pairs such that 
	$S_1 \cap (S_2 \cup T_2) = \emptyset$ and 
	$T_1 \cap (S_2 \cup T_2) = \emptyset$, then 
	$$dam_{L,P}(S_1 \cup S_2, T_1 \cup T_2) = dam_{L,P}(S_1, T_1) + dam_{L,P}(S_2, T_2).$$
	In particular,  if $(S,T)$ is a simple pair, 
	\begin{equation}
	\label{comput for dam(S,T)}
	dam_{L,P}(S,T)=\sum_{c_j\in S}dam_{L,P}(\{c_j\},\{c'_j\}).
	\end{equation}
\end{lemma}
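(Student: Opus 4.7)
The plan is to invoke Lemma \ref{slp-dam} directly and check that each of its three summands splits additively under the given disjointness hypothesis. Recall the formula $dam_{L,P}(S,T)=|\hat{X}_1\cap S|+|\hat{X}_n\cap T|+|\Lambda\cap(S\cup T)|$. Thus the lemma reduces to verifying, for $(S_1\cup S_2,T_1\cup T_2)$ against $(S_1,T_1)$ and $(S_2,T_2)$, that each of these three set-intersection sizes behaves additively.

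First I would observe that $S_1\cap(S_2\cup T_2)=\emptyset$ together with $T_1\cap(S_2\cup T_2)=\emptyset$ is equivalent to the single statement $(S_1\cup T_1)\cap(S_2\cup T_2)=\emptyset$. In particular $S_1\cap S_2=\emptyset$ and $T_1\cap T_2=\emptyset$, so $|\hat{X}_1\cap(S_1\cup S_2)|=|\hat{X}_1\cap S_1|+|\hat{X}_1\cap S_2|$ and analogously for $\hat{X}_n$. For the $\Lambda$ term, the stronger disjointness $(S_1\cup T_1)\cap(S_2\cup T_2)=\emptyset$ gives $|\Lambda\cap(S_1\cup S_2\cup T_1\cup T_2)|=|\Lambda\cap(S_1\cup T_1)|+|\Lambda\cap(S_2\cup T_2)|$. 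Summing the three contributions yields the first displayed equality.

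For the ``in particular'' statement, I would argue by induction on $|S|$. The base case $|S|=0$ is trivial. For the inductive step, fix any $c_j\in S$, write $S=S'\cup\{c_j\}$ and $T=T'\cup\{c'_j\}$, and apply the first part with $(S_1,T_1)=(S',T')$ and $(S_2,T_2)=(\{c_j\},\{c'_j\})$. Because the indexing of $L(u)$ and $L(v)$ is consistent, $\{c_i,c'_i\}\cap\{c_j,c'_j\}=\emptyset$ for every $c_i\in S'$, so the disjointness hypothesis is automatically satisfied and the induction proceeds to telescope $dam_{L,P}(S,T)$ into the sum of the single-couple damages.

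The only step requiring care is the $\Lambda$ term, since it involves $S\cup T$ rather than $S$ and $T$ separately; this is exactly why the lemma assumes $S_1\cap(S_2\cup T_2)=\emptyset$ and $T_1\cap(S_2\cup T_2)=\emptyset$ rather than the weaker $S_1\cap S_2=\emptyset$, $T_1\cap T_2=\emptyset$. Beyond that bookkeeping, there is no genuine obstacle: the lemma is an immediate unpacking of Lemma \ref{slp-dam}.
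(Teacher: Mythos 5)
Your proof is correct and takes exactly the route the paper indicates: the paper states that Lemma \ref{lem-sum} ``follows directly from Lemma \ref{slp-dam}'' without spelling out the details, and you have simply unpacked that claim by checking additivity of the three terms $|\hat{X}_1\cap S|$, $|\hat{X}_n\cap T|$, and $|\Lambda\cap(S\cup T)|$ under the disjointness hypothesis, with the observation that the hypothesis is precisely strong enough to split the $\Lambda$-term. The induction for the ``in particular'' clause, using the consistent indexing to guarantee the pairwise disjointness needed to telescope, is the intended application of the first part.
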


In the following, we may write $dam_{L,P}(c,c')$ for $dam_{L,P}(\{c\},\{c'\})$.
The following observation follows from Lemma \ref{slp-dam}.

\begin{observation}
	\label{obs-couple}
	For any couple $cc'$ and $P=(v_1, v_2, \ldots, v_n)$, the following hold:
	\begin{enumerate}
		\item $dam_{L,P}(c,c') = 2$ if   $c \in \hat{X}_1 \cup \Lambda$ and $c' \in \hat{X}_n \cup \Lambda$, and moreover if $c = c'$, then $c \notin \Lambda$; 
		\item $dam_{L,P}(c,c') = 1$ if $c \in \hat{X}_1 \cup \Lambda$ or  $c' \in \hat{X}_n \cup \Lambda$ but not both  unless $c=c' \in \Lambda$;  
		\item $dam_{L,P}(c,c') = 0$ if $c \notin \hat{X}_1 \cup \Lambda$ and $c' \notin \hat{X}_n \cup \Lambda$.
	\end{enumerate}
	In particular, if $dam_{L,P}(c,c') = 2$ and $|P|=1$, then $c\neq c'$.
\end{observation}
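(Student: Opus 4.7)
The plan is to derive the observation by directly specializing Lemma \ref{slp-dam} to the singleton pair $S = \{c\}$, $T = \{c'\}$. The lemma gives
\begin{equation*}
dam_{L,P}(c,c') = |\hat{X}_1 \cap \{c\}| + |\hat{X}_n \cap \{c'\}| + |\Lambda \cap (\{c\} \cup \{c'\})|,
\end{equation*}
so $dam_{L,P}(c,c')$ is a sum of three nonnegative contributions, each of which I determine by a case split. The crucial structural fact, inherited from Definition \ref{def-slp}, is that $\hat{X}_1 \subseteq L(v_1) \setminus \Lambda$ and $\hat{X}_n \subseteq L(v_n) \setminus \Lambda$, hence $\hat{X}_1 \cap \Lambda = \emptyset$ and $\hat{X}_n \cap \Lambda = \emptyset$. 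This disjointness is what keeps the total from exceeding $2$ and is used repeatedly in the bookkeeping.

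For item (1), I will assume $c \in \hat{X}_1 \cup \Lambda$ and $c' \in \hat{X}_n \cup \Lambda$ with the extra hypothesis $c \neq c'$ whenever $c \in \Lambda$, and split into four subcases according as each of $c, c'$ lies in $\hat{X}_1$ (resp.\ $\hat{X}_n$) or in $\Lambda$. In each subcase the three contributions sum to exactly $2$: the key point being that in the subcase $c, c' \in \Lambda$ the third contribution is $|\Lambda \cap \{c,c'\}| = 2$ precisely because $c \neq c'$, which is exactly the caveat the statement imposes. For item (2), I repeat the enumeration under the new hypothesis that exactly one membership holds, or else $c = c' \in \Lambda$; each subcase now evaluates to $1$. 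Item (3) is immediate, since $c \notin \hat{X}_1 \cup \Lambda$ and $c' \notin \hat{X}_n \cup \Lambda$ force all three terms to vanish.

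Finally, for the ``in particular'' clause, when $|P| = 1$ we have $v_1 = v_n$, so $\Lambda = L(v_1) = L(v_n)$ and $L(v_1) \setminus \Lambda = \emptyset$, which forces $\hat{X}_1 = \hat{X}_n = \emptyset$. Then the only way to reach $dam_{L,P}(c,c') = 2$ is via the subcase $c, c' \in \Lambda$, which by the caveat in (1) requires $c \neq c'$.

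There is no genuine obstacle here; the entire argument is a routine case analysis once the formula from Lemma \ref{slp-dam} is instantiated on singletons, and the only point that needs care is tracking which subcase of (1) can produce the value $2$ from the third summand alone, so as to justify the qualifier ``if $c = c'$ then $c \notin \Lambda$'' appearing in the statement.
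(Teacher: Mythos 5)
Your proof is correct and takes essentially the same approach the paper intends: the paper merely asserts that Observation \ref{obs-couple} ``follows from Lemma \ref{slp-dam}'' without writing out the verification, and your specialization of that formula to singletons, together with the disjointness $\hat{X}_1 \cap \Lambda = \hat{X}_n \cap \Lambda = \emptyset$ and the resulting case analysis (including $\hat{X}_1 = \hat{X}_n = \emptyset$ when $|P|=1$), is precisely the routine argument that was left implicit.
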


\begin{definition}
	\label{safe-light-heavy}
	Assume $c_jc'_j$ is a couple. 
	\begin{itemize}
		\item   $c_jc'_j$ is {\rm heavy} for the internal path $P$ if $dam_{L,P}(c_j, c'_j)=2$;
		\item   $c_jc'_j$ is {\rm light} for the internal path $P$ if $dam_{L,P}(c_j, c'_j)=1$;
		\item   $c_jc'_j$ is {\rm safe} for the internal path $P$ if $dam_{L,P}(c_j, c'_j)=0$.
	\end{itemize}
\end{definition}

For each path $P^i$, let $x^{(i)},y^{(i)},z^{(i)}$ denote the number of heavy, light   and safe couples for $P^{i}$, respectively. 
Then for $i=0,1,2$, 
$$x^{(i)}+y^{(i)}+z^{(i)}=\ell, \text{ and }  dam_{L, P^i}(L(u), L(v)) = 2x^{(i)} + y^{(i)}.$$ 

Assume $m \ge \tau$ are non-negative integers and  
  $(S,T)$ is a simple pair  of size $m-\tau$. Let $a^{(i)}(S,T),b^{(i)}(S,T),c^{(i)}(S,T)$ denote the number of heavy, light and safe couples for $P^{i}$ in  $ (S,T)$, respectively.  Then for $i=0,1,2$, 
 $$a^{(i)}(S,T)+b^{(i)}(S,T)+c^{(i)}(S,T)=m-\tau \text{ and } dam_{L,P}(S,T) = 2a^{(i)}(S,T)+b^{(i)}(S,T).$$

    Let $\beta(P^{i})$ denote the   number of  bad simple pairs of size $m-\tau$  with respect to  $(L, P^i)$. We write $\hat{X}^i_1$,   $\hat{X}^i_{n_i}$ and $\Lambda^i$ for the sets $\hat{X}_1, \hat{X}_{n}, \Lambda$   calculated for $P=P^i$.

\begin{theorem}
	\label{thm:f-main first part}
	Assume $\ell$ and $\tau$ are non-negative even integers,  $L$ is a list assignment for $G$   satisfying the following:
	\begin{enumerate}[label= {(C\arabic*)}]
	 	\item \label{C1}   $\tau \leq 2\lfloor\frac{m}{2}\rfloor$ and $\ell +\tau \geq 2\lceil\frac{m}{2}\rceil$.
	 	\item \label{C2} $|L(u)|=|L(v)| = \ell$.
	 	\item \label{C3} For each $i \in \{0,1,2\}$, $|L(v^i_1)|\geq 2m-\tau$ and $|L(v^i_{n_i})| \geq  2m+1-\tau$.
	 	\item \label{C4}  $|L(w)|\geq 2m+1$ for $w \ne u,v, v^i_1, v^i_{n_i}$.
	 	\item \label{C5}  For $i=0,1,2$,
	 	$$S_L(P^i)-n_im \ge \max\{m +\frac{n_i-3}{2}+  dam_{L,P^i}(L(u),L(v))-\ell -\tau, m+\frac{n_i-1}{2} -\tau\}.$$ 
	\end{enumerate}	
	 Then there exists a set $S \subset L(u)$ and a set $T \subset L(v)$ satisfying $|S|=|T|=m-\tau$ such that for each $i$, $$dam_{L,P^i}(S,T) \leq S_L(P^i)-n_im.$$
\end{theorem}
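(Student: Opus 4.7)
The plan is to establish the theorem by a counting argument over simple pairs. Fix a consistent indexing $L(u)=\{c_1,\ldots,c_\ell\}$, $L(v)=\{c'_1,\ldots,c'_\ell\}$. For each $k$-subset $I\subseteq\{1,\ldots,\ell\}$ with $k:=m-\tau$, form $S_I=\{c_j:j\in I\}$ and $T_I=\{c'_j:j\in I\}$; by Observation \ref{obs-simple pair} the pair $(S_I,T_I)$ is simple, so there are $\binom{\ell}{k}$ candidates of the required size. The aim is to show $\sum_{i=0}^{2}\beta(P^i)<\binom{\ell}{k}$, which by the union bound produces one candidate $(S_I,T_I)$ that is good (not bad) for $P^0$, $P^1$ and $P^2$ simultaneously.

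For each path $P^i$, I would apply Lemma \ref{lem-sum} to write $dam_{L,P^i}(S_I,T_I)=\sum_{j\in I}dam_{L,P^i}(c_j,c'_j)$ and then classify the couples $c_jc'_j$ as heavy, light or safe for $P^i$ via Definition \ref{safe-light-heavy} and Observation \ref{obs-couple}, with respective totals $x^{(i)},y^{(i)},z^{(i)}$. Denoting by $a^{(i)}, b^{(i)}$ the numbers of heavy and light couples selected by $I$, the pair $(S_I,T_I)$ is bad for $P^i$ iff $2a^{(i)}+b^{(i)}\ge \gamma^{(i)}+1$, where $\gamma^{(i)}:=S_L(P^i)-n_im$. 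Because $n_i\ge 3$ for the theta graphs treated in this section, condition (C5) forces
$$\gamma^{(i)}+1\ge\max\bigl\{2x^{(i)}+y^{(i)}+k+1-\ell,\; k+2\bigr\},$$
and hence Observation \ref{obs:about main lemma} applied with parameters $(x,y)=(x^{(i)},y^{(i)})$ yields $\beta(P^i)<\tfrac{1}{2}\binom{\ell}{k}$.

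The main obstacle is to refine this inequality, because the naive union bound only delivers $\sum_i\beta(P^i)<\tfrac{3}{2}\binom{\ell}{k}$, not the required $\binom{\ell}{k}$. I would close the gap by casework on the parities of $\ell, k, n_i$ and on the sizes $x^{(i)}, y^{(i)}$, exploiting that Lemma \ref{main-lemma}'s equality configuration ($\ell$ even, $k$ odd, $x=\ell/2$, $y=0$) cannot hold for all three $P^i$ at once, so when one path is near-extremal the other two must have substantially smaller bad counts. A complementary tool I expect to employ is a pairing argument that groups each index set $I$ with its complement in $\{1,\ldots,\ell\}$ (or with a more refined involution when $2k\neq\ell$) and shows that within each orbit at most one representative yields a pair bad for all three paths. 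The case analysis will become delicate at the boundary of (C1), where $\ell+\tau=2\lceil m/2\rceil$ forces $k$ close to $\ell$, and when some $n_i$ equals $3$, where Observation \ref{obs:about main lemma} affords only the minimum slack over Lemma \ref{main-lemma}.
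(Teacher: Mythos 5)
Your setup—fixing a consistent indexing, producing $\binom{\ell}{k}$ candidate simple pairs $(S_I,T_I)$ with $k=m-\tau$, decomposing damage couple-by-couple via Lemma \ref{lem-sum}, and invoking Observation \ref{obs:about main lemma} to get $\beta(P^i)<\tfrac12\binom{\ell}{k}$—is exactly how the paper begins (Claims \ref{claim-bad simple pair} and \ref{second case half lemma}). You also correctly identify the obstacle: the union bound only gives $\sum_i\beta(P^i)<\tfrac32\binom{\ell}{k}$.

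However, the resolution you sketch does not close the gap, and it is not what the paper does. You propose a complement-pairing/involution on index sets together with parity casework on $\ell,k,n_i$; but for this to finish you would need that in each orbit $\{I,I^c\}$ at most one representative is bad for \emph{some} $P^i$ (equivalently, if $I$ is bad for any path then $I^c$ is good for all three), and nothing in your outline justifies that. Showing merely that the extremal configuration of Lemma \ref{main-lemma} cannot occur for all three paths simultaneously still leaves you well short, since $\beta(P^i)$ can be close to $\tfrac12\binom{\ell}{k}$ for several paths at once without any of them being exactly extremal. This is a genuine missing idea, not just unfinished bookkeeping.

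The paper's actual route is structurally different: it argues by induction on $2\ell+\tau$. The counting inequality $\beta(P^i)<\tfrac12\binom{\ell}{k}$ is used only to conclude $\beta(P^i)>0$ for \emph{every} $i$ (since if one were zero, the sum of the other two would already be $<\binom{\ell}{k}$ and a good pair would exist). From $\beta(P^i)>0$ one extracts $x^{(i)}\ge 2$ and $z^{(i)}\ge 1$ (Claim \ref{claim:2x+y-upper bound and xz at least one}). The heavy lifting is then done by Claims \ref{all at least light} and \ref{all at most light}, which say there is no simple pair $(D_u,D_v)$ of suitable even size whose damage is uniformly $\ge d$ (respectively $\le d$) on all three paths; these are proved by passing to a smaller list assignment $L'$ (reducing $\ell$ or shifting mass from $\ell$ to $\tau$) and invoking the theorem inductively for $L'$. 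The rest is a finite case analysis of heavy/light/safe patterns of couples (Claims \ref{claim:if l+t =m+1 no all at least light}--\ref{claim:evey couple is HSL} and Table \ref{table1}), which pins the structure down so tightly that an explicit good pair $S,T$ can be read off. None of this inductive machinery appears in your proposal, and without it the structural facts you would need to make any pairing argument go through are unavailable. In short: right framework and right diagnosis of the obstacle, but the key idea—induction on $2\ell+\tau$ with the two ``no uniformly heavy/light pair'' claims—is missing, and the substitute you gesture at would not succeed as stated.
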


\begin{proof} 
We prove the lemma by induction on $2\ell+\tau$.  First assume that $2\ell + \tau =2\lceil\frac{m}{2}\rceil$. Since $\ell$ and $\tau$ are non-negative, and $\ell +\tau \geq 2\lceil\frac{m}{2}\rceil$, we have  $\ell = 0$ and $\tau =2\lceil\frac{m}{2}\rceil$. Note that by \ref{C1},  $\tau \leq 2\lfloor\frac{m}{2}\rfloor$, so $m$ is even and $\tau = m$. By \ref{C5}, for each $i \in \{0,1,2\}$,  
 $S_L(P^i)-n_im \geq \frac{n_i-1}{2}+m-\tau > m-\tau=0$. Let $S=T=\emptyset$, we are done.

Thus we assume that  $2\ell + \tau > 2\lceil\frac{m}{2}\rceil$ in the sequel. Assume to the contrary, Theorem \ref{thm:f-main first part} is not true for $L$.

The following claim gives a necessary condition for a simple pair of size $m-\tau$ being bad with respect to $(L,P^i)$.  Recall that $dam_{L,P^i}(L(u),L(v))=2x^{(i)}+y^{(i)}$.   Claim \ref{claim-bad simple pair} follows from \ref{C5}  and the definition of bad pair  directly.

\begin{claim}
	\label{claim-bad simple pair}
	If $(S,T)$ is a bad simple pair of size $m-\tau$ with respect to $(L, P^i)$, then 
\begin{align*}
 	dam_{L,P^i}(S,T) &= 2a^{(i)}(S,T)+b^{(i)}(S,T) \\  
 	&\geq  \max\{2x^{(i)}+y^{(i)} +m+\frac{n_i-1}{2}-\ell-\tau, m+\frac{n_i+1}{2}-\tau\} \\
 	&\geq  \max\{2x^{(i)}+y^{(i)} +m+1-\ell-\tau, m+2-\tau\}.
\end{align*}
\end{claim}
The last inequality holds as $n_i \ge 3$.

The following claim gives an upper bound and a lower bound of the number of bad simple pairs of size $m-\tau$   with respect to $(L,P^i)$.

\begin{claim}
	\label{second case half lemma}
	For each $i \in \{0,1,2\}$, $0 < \beta(P^i)  < \frac{1}{2}\binom{\ell}{m-\tau}$.  
\end{claim}

\begin{proof}
	If a simple pair $(S,T)$ of size $m-\tau$ is bad with respect to $(L,P^i)$, then by Claim \ref{claim-bad simple pair}, $dam_{L,P^i}(S,T)  \geq \max\{2x^{(i)}+y^{(i)} +m+1-\ell-\tau, m+2-\tau\}$. Note that $a^{(i)}(S,T) + b^{(i)}(S,T) +c^{(i)}(S,T) =m-\tau$, so by Claim \ref{claim-bad simple pair} and Observation \ref{obs:about main lemma} (setting $m-\tau=k$), we have that $\beta(P^i) <  \frac{1}{2}\binom{\ell}{m-\tau}$.	
	
	If $\beta(P^i) =0$ for some $i$,   then $\beta(P^0)+\beta(P^1)+\beta(P^2)  \le  \binom{\ell}{m-\tau}-1$. So there exists a simple pair $(S,T)$ of size $m-\tau$ which is not bad with respect to any $(L,P^i)$, a contradiction to the assumption. 
\end{proof}

\begin{claim}
	\label{claim:2x+y-upper bound and xz at least one}
	For each $i \in \{0,1,2\}$, $2x^{(i)}+y^{(i)} \leq \ell+m-\tau-\frac{n_i-1}{2}$, and $x^{(i)}\geq 2$ and  $z^{(i)} \geq 1$.
\end{claim}
\begin{proof}
	If $S_{L}(P^i)- n_im \geq 2m-2\tau$, then for any simple pair $(S,T)$ of size $m-\tau$, 
	$S_L(P^i) - dam_{L,P^i}(S,T) \ge n_im$ (as  $dam_{L,P^{i}}(S,T) \leq 2m-2\tau$), hence $(S,T)$ is not bad with respect to $(L,P^i)$, which means that $\beta(P^i)=0$, a contradiction to Claim \ref{second case half lemma}.  Thus we may assume that  $S_{L}(P^i) - n_im\leq 2m-2\tau-1$. It follows from \ref{C5} that	
	\begin{equation*}
	\label{2x+y-upperbound-6m-1}
	2x^{(i)}+y^{(i)} =dam_{L,P^i} (L(u),L(v)) \leq 2m-2\tau -1-m -\frac {n_i-3}{2} +\ell+\tau = \ell+m-\tau-\frac{n_i-1}{2}.
	\end{equation*}
	Thus we proved the first part.

	Assume $x^{(i)} \leq 1$ for some $i \in \{0,1,2\}$,   then   for every simple pair $(S,T)$ of size $m-\tau$, $dam_{L,P^i}(S,T) \leq 2\times 1 +(m-\tau-1)= m-\tau+1$, contrary to Claim \ref{claim-bad simple pair}.  Thus $x^{(i)} \geq 2$.
	
	Assume $z^{(i)}=0$, then $x^{(i)}+y^{(i)}= \ell$ and for any simple pair $(S,T)$ of size $m-\tau$, $a^{(i)}(S,T)+b^{(i)}(S,T) = m-\tau$. By Claim \ref{claim-bad simple pair}, we have 
	\begin{align*} 
	a^{(i)}(S,T) +m-\tau & = 2a^{(i)}(S,T)+b^{(i)}(S,T)  \\
	& \geq  2x^{(i)}+y^{(i)} +m+1-\ell-\tau\\
	& = x^{(i)}+1+m-\tau. 
	\end{align*}
	This implies that $a^{(i)}(S,T) \geq x^{(i)}+1$, in contrary to the fact that  $a^{(i)}(S,T) \leq x^{(i)}$. 
\end{proof}

\begin{claim}
	\label{claim:l+t at least m+1 and m-t is at least 2}
	$\ell+\tau \geq m+1$ and $\tau \leq 2\lfloor\frac{m}{2}\rfloor-2 \le m-2$. 
\end{claim}
\begin{proof}
Suppose to the contrary, $\ell+\tau<m+1$. By \ref{C1}, we have $\ell+\tau=m$. Let $S=L(u)$, $T=L(v)$.  By \ref{C5}, for $i = 0,1,2$, 
$$dam_{L,P^i}(S,T) =2x^{(i)}+y^{(i)} \leq  S_L(P^i)-n_im,$$
contrary to the assumption. This proves the first inequality.
 	
Assume to the contrary that $\tau > 2\lfloor\frac{m}{2}\rfloor - 2$. As $\tau$ is even and  $\tau \le 2\lfloor\frac{m}{2}\rfloor$, we have 
$\tau = 2\lfloor\frac{m}{2}\rfloor$. 
If $m$ is even,   then $\tau=m$ and we take $S=T=\emptyset$.  By \ref{C5}, $dam_{L,P^i}(S,T) = 0 < S_L(P^i) -n_im$ for $i=0,1,2$, a contradiction. 

Assume $m$ is odd, then we have $\tau=m-1$. By \ref{C5}, $S_L(P^i) -n_im \geq m+ 1  -\tau  \geq  2$. Let $S=\{c\}$ and $T=\{c'\}$ for any couple $cc'$. Then $dam_{L,P^i}(S,T) \le 2 \le S_L(P^i) -n_im$ for $i=0,1,2$, a contradiction. 
\end{proof}

\begin{claim}
	\label{all at least light}
	 There does not exist a simple pair $(D_u,D_v)$ such that $|D_u|=|D_v|=d \le \ell -m+\tau$ is even,  and $dam_{L,P^i}(D_u,D_v) \geq d$ for each  $i \in \{0,1,2\}$. 
\end{claim}

\begin{proof}
	Assume $(D_u,D_v)$ is such a simple pair. Let $L'$ be a new list assignment for $G$ with $L'(u)= L(u)-D_u$, $L'(v)=L(v)-D_v$, $L'(w)=L(w)$ for $w \in V(G) \setminus \{u,v\}$. 
	
	\ref{C1}-\ref{C4} of Theorem \ref{thm:f-main first part} are easily seen to be satisfied by $L'$, with $ \ell'= \ell - d$ and $\tau'=\tau$.
	 
 As $L'(w)=L(w)$ for $w\in V(G) \setminus\{u,v\}$, so for each $i\in \{0,1,2\}$, $dam_{L',P^i}(L'(u),L'(v))=dam_{L,P^i}(L'(u),L'(v))$ and $S_L(P^i) = S_{L'}(P^i)$. Therefore, by  Lemma \ref{lem-sum}, $$dam_{L,P^i}(L(u), L(v)) = dam_{L',P^i}(L'(u), L'(v)) + dam_{L,P^i}(D_u,D_v) \geq dam_{L',P^i}(L'(u), L'(v)) +d,$$ 
  and 
	\begin{align*} 
S_{L'}(P^i) - n_im   & =   S_L(P^i)-n_im \\
					 & \geq  \max \{ m+ \frac{n_i-3}{2}+dam_{L,P^i}(L(u), L(v))-\ell-\tau, m+\frac{n_i-1}{2}-\tau\} \\
				  & \geq  \max \{ m+ \frac{n_i-3}{2}+dam_{L',P^i}(L'(u), L'(v))-\ell'-\tau,
					 m+\frac{n_i-1}{2}-\tau\}.
	\end{align*}
I.e.,	\ref{C5} is also satisfied by $L'$.
	By induction hypothesis,   there exists a pair $(S,T)$, with $|S|=|T|=m-\tau$, $S \subseteq L'(u) \subseteq L(u), T \subseteq L'(v) \subseteq L(v)$,  such that for each $i \in \{0,1,2\}$, $dam_{L,P^i}(S,T) =dam_{L',P^i}(S,T) \leq S_L(P^i)-n_im$.  
	
	This completes the proof of this  claim.
\end{proof}

\begin{claim}
	\label{all at most light}
	There does not exist a simple pair $(D_u,D_v)$ such that $0 < |D_u|=|D_v|=d \le m-\tau$ is even,  and $dam_{L,P^i}(D_u,D_v) \leq d$ for each  $i \in \{0,1,2\}$. 
\end{claim} 

\begin{proof}
	Assume   $(D_u,D_v)$ is such a simple pair. Let $L'$ be a new list assignment for $G$ with $L'(u)=L(u)-D_u$, $L'(v)=L(v)-D_v$, for each $i$,  $L'(v^i_1)= L(v^i_1)-D_u$, $L'(v^i_{n_i})=L(v^i_{n_i})-D_v$,  $L'(v^i_j)=L(v^i_j)$ where $1 < j < n_i$.
	
	Observe that \ref{C1}-\ref{C4} of Theorem \ref{thm:f-main first part} are  satisfied by $L'$, with $ \ell'= \ell - d$ and $\tau'=\tau+d$.  Note that
	$  S_{L'}(P^i)= S_{L}(P^i) - dam_{L,P^i}(D_u,D_v) \ge S_{L}(P^i) -d$.
	As $S_L(P^i)-n_im \ge m+ \frac{n_i-1}{2}-\tau$, we have  $S_{L'}(P^i)-n_im \geq m+\frac{n_i-1}{2}-\tau'$.  On the other hand,  as $dam_{L',P^i}(L'(u),L'(v))=dam_{L,P^i}(L'(u),L'(v))$,  by Lemma \ref{lem-sum}, $dam_{L',P^i}(L'(u),L'(v)) = dam_{L,P^i}(L(u), L(v)) - dam_{L,P^i}(D_u,D_v)$. So 
		\begin{align*}
			S_{L'}(P^i) - n_im  & =  S_L(P^i)-n_im -dam_{L,P^i}(D_u,D_v) \\
			& \geq   m+ \frac{n_i-3}{2}+dam_{L,P^i}(L(u), L(v))
		-dam_{L,P^i}(D_u,D_v)-\ell-\tau,   \\
			& =   m+ \frac{n_i-3}{2}+dam_{L',P^i}(L'(u), L'(v))-\ell'-\tau.
		\end{align*} 
	Therefore,	\ref{C5} is also satisfied by $L'$.

	By induction,   there exists a pair $(S',T')$, where $|S'|=|T'|=m-\tau' = m-\tau-d$ such that for every $i$, $$dam_{L',P^i}(S',T') \leq S_{L'}(P^i)-n_im.$$  Let $S=S' \cup D_u$ and  $T=T' \cup D_v$.   As $S' \cap D_u = \emptyset$ and $T' \cap D_v = \emptyset$,  $dam_{L',P^i}(S',T'))=dam_{L,P^i}(S',T')$.  Thus we have $|S|=|T|=m-\tau$ and  
	\begin{align*} 
			dam_{L,P^i}(S,T) & \leq  dam_{L,P^i}(D_u,D_v)+dam_{L,P^i}(S',T') \\
							 & \leq dam_{L,P^i}(D_u,D_v)+  S_{L'}(P^i)-n_im\\
							 & = S_{L}(P^i)-n_im.
	\end{align*} 
	 
	This completes the proof of Claim \ref{all at most light}.
\end{proof}

Observe that it follows from Claim \ref{claim:l+t at least m+1 and m-t is at least 2} that $m-\tau \ge 2$. So there does not exist $(D_u, D_v)$ such that $|D_u|=|D_v|=2$ and $dam_{L,P^i}(D_u,D_v) \leq 2$ for each $i \in \{0,1,2\}$.

\begin{claim}
	\label{claim:if l+t =m+1 no all at least light}
	If $\ell+\tau=m+1$, then there does not exist a pair $(c,c')$   (not necessarily a simple pair)   such that  $dam_{L,P^i}(L(u)\setminus c,L(v)\setminus c') \leq dam_{L,P^i}(L(u),L(v))-1$ for $i=0,1,2$. Consequently, we have the following.
	\begin{enumerate}[label= {(\arabic*)}]
		\item \label{claim:case-at least light couple} There is no couple $(c,c')$ satisfying $dam_{L,P^i}(c,c') \geq 1$ for all $i$.
		\item \label{claim:l+t=m+1 heavy for at most one}  Every couple is heavy for at most one internal path.
	\end{enumerate}
\end{claim}
\begin{proof}
	If this is not true, then let $S=L(u)\setminus c$ and $T=L(v)\setminus c'$ (and hence $m-\tau = |S|=\ell-1$). By \ref{C5}  and the assumption, for $i \in \{0,1,2\}$,
	\begin{align*} 
	S_{L}(P^i) - n_im & \geq m+ \frac{n_i-3}{2} +2x^{(i)}+y^{(i)}-\ell-\tau \\
	& \geq 2x^{(i)}+y^{(i)} -1 \\
	&  =dam_{L,P^i}(L(u),L(v))-1 \\
	& \geq dam_{L,P^i}(S,T).   
	\end{align*}  
	Hence, $(S,T)$ is a pair   satisfying Theorem \ref{thm:f-main first part}, a contradiction.	
	
	  \ref{claim:case-at least light couple} follows from Equality (\ref{comput for dam(S,T)}), as  if $(c,c')$ is a couple, i.e., a simple pair of size $1$,  then $dam_{L,P^i}(L(u),L(v)) = dam_{L,P^i}(L(u)\setminus c,L(v)\setminus c') + dam_{L,P^i}(c,c')$.
	  
	 For \ref{claim:l+t=m+1 heavy for at most one},  suppose to the contrary, $c_0c'_0$ is heavy for $P^0$ and $P^1$. By \ref{claim:case-at least light couple}, $c_0c'_0$ is safe for $P^2$.	As $x^{(2)} \geq 2$, there exists a couple $c_1c'_1$ which is heavy for $P^2$. We claim that for $i=0,1,2$, 
	 	\begin{equation}
	 	\label{ineq: difference is at least one}
	 	dam_{L,P^i}(\{c_0,c_1\},\{c'_0,c'_1\})-dam_{L,P^i}(c_0,c'_1)\geq 1.
	 	\end{equation} 
	 If $i=2$, then $dam_{L,P^i}(\{c_0,c_1\},\{c'_0,c'_1\}=2$ and  $dam_{L,P^i}(c_0,c'_1)=1$,  which implies that Inequality (\ref{ineq: difference is at least one}) holds.
	 For $i=0$ or $1$, if $c_1c'_1$ is not safe for $P^i$, then $dam_{L,P^i}(\{c_0,c_1\},\{c'_0,c'_1\} \geq 3$ and $dam_{L,P^i}(c_0,c'_1) \leq 2$, hence Inequality (\ref{ineq: difference is at least one})   holds. 
	 If $c_1c'_1$ is   safe for $P^i$, then $dam_{L,P^i}(\{c_0,c_1\},\{c'_0,c'_1\} =2$ and $dam_{L,P^i}(c_0,c'_1) =1$, so Inequality (\ref{ineq: difference is at least one}) also holds. 
	 	
 Let $S=L(u)\setminus c_1$, and $T=L(v) \setminus c'_0$. By Lemma \ref{lem-sum} and Inequality (\ref{ineq: difference is at least one}), for $i=0,1,2$,
	 \begin{align*}
	 	dam_{L,P^i}(S,T)  &=  dam_{L,P^i}(L(u)\setminus\{c_0,c_1\},L(v)\setminus\{c'_0,c'_1\})+dam_{L,P^i}(c_0,c'_1) \\
	 	& =   dam_{L,P^i}(L(u), L(v)) -dam_{L,P^i}(\{c_0,c_1\},\{c'_0,c'_1\}) +dam_{L,P^i}(c_0,c'_1) \\
	 	&\leq  dam_{L,P^i}(L(u), L(v))-1,
	 \end{align*}
   a  contradiction. 
\end{proof}
 
	\begin{claim} 
		\label{claim:evey couple is HSL} 
		The following hold:	
		\begin{enumerate}[label= {(\arabic*)}]
			\item \label{claim:case-not safe and heavy for two} Every couple is safe (respectively, heavy) for at most one internal path. 
			\item \label{claim:case-l+t>m+1} If $\ell+\tau \geq m+2$, then no couple   is light for exactly two internal paths.
			Moreover, there is at most one couple which is light for all internal paths.
			\item \label{claim:case-l+t=m+1 not light for two} If $\ell+\tau \leq m+1$, then every couple is light for at most one internal path.   
		\end{enumerate}
		
	\end{claim}

\begin{proof}  
	\ref{claim:case-not safe and heavy for two}. Assume to the contrary,   $c_jc'_j$ is safe for two paths,  say for both $P^0$ and $P^1$. If $c_jc'_j$ is also safe for $P^2$, then for any other couple $c_kc'_k$, we know that $(\{c_j,c_k\},\{c'_j,c'_k\})$ is a simple pair of size $2$ contradicting  Claim \ref{all at most light} (Recall that $m-\tau \geq 2$ by Claim \ref{claim:l+t at least m+1 and m-t is at least 2}).  Thus $c_jc'_j$ is not safe for $P^2$.   As $z^{(2)} \geq 1$, there exists a couple $c_kc'_k$ which is safe for $P^2$. It follows that $(\{c_j,c_k\},\{c'_j,c'_k\})$ is a simple pair of size $2$ contradicting  Claim \ref{all at most light}. 
	
	Next, we shall prove that every couple is heavy for at most one path.  It is true if $\ell+\tau=m+1$ by Claim \ref{claim:if l+t =m+1 no all at least light}\ref{claim:l+t=m+1 heavy for at most one}.  
	Assume $\ell +\tau \geq m+2$. 
	Assume there is a  couple   $c_jc'_j$
	which is heavy for at least two internal paths, say $P^0$ and $P^1$. If  $c_jc'_j$ is also heavy for $P^2$, then for any other couple $c_kc'_k$,   $(\{c_j,c_k\},\{c'_j,c'_k\})$ is a simple pair of size $2$ contradicting  Claim \ref{all at least light} (we need the assumption $\ell +\tau \geq m+2$ so that we can use Claim \ref{all at least light} with $d=2$). So $c_jc'_j$ is not heavy for $P^2$. As $x^{(2)} \ge 2$, there exists a couple $c_kc'_k$ which is heavy for $P^2$, Then  $(\{c_j,c_k\},\{c'_j,c'_k\})$ is also a simple pair of size $2$ contradicting  Claim  \ref{all at least light}.

	\ref{claim:case-l+t>m+1}. 
	Assume to the contrary that there is a couple $c_jc'_j$ which is light for exactly two internal paths, say $P^0$ and $P^1$, and  $c_jc'_j$ is either heavy or safe for $P^2$. 
	
	First assume that  $c_jc'_j$ is heavy for $P^2$. Note that by Claim \ref{claim:2x+y-upper bound and xz at least one}, $z^{(2)} \geq 1$, then there exists a distinct couple $c_kc'_k$ which is safe for $P^2$. By  the fact that no couple is  safe    for two internal paths (by \ref{claim:case-not safe and heavy for two} of this claim), $c_kc'_k$ is safe for neither $P^0$ nor $P^1$. Then  $(\{c_j,c_k\},\{c'_j,c'_k\})$ is a simple pair of size $2$ contradicting   Claim \ref{all at least light}. 
	
	So $c_jc'_j$ is safe for $P^2$. As $x^{(2)} \geq 2$ (by Claim \ref{claim:2x+y-upper bound and xz at least one}), there exists a distinct couple $c_kc'_k$ which is heavy for $P^2$. By  the fact that no couple is heavy for two internal paths,  $c_kc'_k$ is heavy for neither $P^0$ nor $P^1$. Then  $(\{c_j,c_k\},\{c'_j,c'_k\})$ is a simple pair  of size $2$  contradicting   Claim \ref{all at most light}.  
	
	For the ``moreover'' part, if there are  two couples which are light for all internal paths, then two such couples comprise a simple pair of size $2$ which contradicts Claim \ref{all at most light}.  
	This completes the proof of   \ref{claim:case-l+t>m+1}.

 	\ref{claim:case-l+t=m+1 not light for two}. Assume to the contrary, $c_ic'_i$ is light for at two paths, say $P^0$ and $P^1$. It follows from Claim \ref{claim:if l+t =m+1 no all at least light}\ref{claim:case-at least light couple} that $c_ic'_i$ is safe for $P^2$. By Claim \ref{claim:2x+y-upper bound and xz at least one}, $x^{(2)} \geq 1$, so assume that $c_jc'_j$ is heavy for $P^2$. By \ref{claim:case-not safe and heavy for two} of this claim, $c_jc'_j$ is not heavy for both $P^0$ and $P^1$, so $(\{c_i,c_j\},\{c'_i,c'_j\})$ is a simple pair of size $2$ contradicting  Claim \ref{all at most light} 
 		
	This completes the proof of Claim \ref{claim:evey couple is HSL}. 
\end{proof}

	\medskip
	
Since $x^{(i)} \ge 2$ for $i=0,1,2$ (by Claim \ref{claim:2x+y-upper bound and xz at least one}) and no couple is heavy for two internal paths (by  Claim \ref{claim:evey couple is HSL}\ref{claim:case-not safe and heavy for two} ), there exist 
distinct couples $c_ic'_i$ for $i=0,1,\ldots, 5$ such that 
 \begin{itemize}
 	\item $c_0c'_0$ and $c_1c'_1$ are   heavy for $P^0$.
 	\item $c_2c'_2$ and $c_3c'_3$ are   heavy for $P^1$.
 	\item $c_4c'_4$ and $c_5c'_5$ are heavy for $P^2$.
 \end{itemize}  
	Without loss of generality, we may   assume that $c_0c'_0$ is  light for $P^1$ and safe for $P^2$  (by Claim \ref{claim:evey couple is HSL}\ref{claim:case-l+t>m+1} and Claim \ref{claim:evey couple is HSL}\ref{claim:case-l+t=m+1 not light for two},   $c_0c'_0$ cannot be light for both $P^1$ and $P^2$, and by Claim \ref{claim:evey couple is HSL}\ref{claim:case-not safe and heavy for two},   $c_0c'_0$ cannot be safe for both $P^1$ and $P^2$). Then both $c_4c'_4$ and $c_5c'_5$ are light for $P^0$, for otherwise, 
	$(\{c_0,c_4\},\{c'_0,c'_4\})$ or $(\{c_0,c_5\},\{c'_0,c'_5\})$  is a  simple pair of size $2$ which contradicts Claim \ref{all at most light}. 
	 Consequently, by Claim \ref{claim:evey couple is HSL}, both $c_4c'_4$ and $c_5c'_5$ are safe for $P^1$.
	 
	Similarly,  both $c_2c'_2$ and $c_3c'_3$ are light for $P^2$, and  safe for $P^0$, since otherwise, $(\{c_2,c_4\},\{c'_2,c'_4\})$ or  $(\{c_3,c_4\},\{c'_3,c'_4\})$ is a  simple pair of size $2$ which contradicts  Claim \ref{all at most light}.

	Also $c_1c'_1$ is light for $P^1$, safe for $P^2$, for otherwise, $(\{c_1,c_2\},\{c'_1,c'_2\})$ is a  simple pair of size $2$ which contradicts  Claim \ref{all at most light}. See Table \ref{table1}.
 
 \begin{table}[H]
	\centering
	\begin{tabular}{c|c|c|c|c|c|c|c} 
		\bottomrule[1pt] \hline $L(u)$ & $c_0$ & $c_1$ & $c_2$ & $c_3$ & $c_4$ & $c_5$ & $\cdots$   \\	
		\hline $P^0$  & heavy & heavy     & safe     & safe     & light    & light     &   $\cdots$      \\
		\hline $P^1$  & light & light     & heavy     & heavy    & safe    & safe     &  $\cdots$      \\
		\hline $P^2$  & safe & safe     & light     & light     & heavy    & heavy     &    $\cdots$    \\ 	 
		\hline $L(v$) & $c'_0$  & $c'_1$ & $c'_2$ & $c'_3$ & $c'_4$ & $c'_5$ &  $\cdots$    \\	
	 	\toprule[1.5pt]
	\end{tabular} 
	\caption{$dam_{L,P^i}(c_i,c'_i)$}
	 \label{table1}
\end{table}

	If $\tau \leq 2\lfloor\frac{m}{2}\rfloor-6$, then  $(\{c_0,c_1,\ldots,c_5\},\{c'_0,c'_1,\ldots,c'_5\})$ is a simple pair of size $6$ which contradicts  Claim \ref{all at most light}.

	Assume $\tau =2\lfloor\frac{m}{2}\rfloor-4$.  If $m$ is even, then $m-\tau=4$.
	By \ref{C5}, $	S_L(P^i)-n_im \geq \frac{n_i-1}{2}+m -\tau \geq m-\tau+1  = 5$. Let $S=\{c_0,c_1,c_2,c_4\} $ and $T=\{c'_0,c'_1,c'_2,c'_4\}$. Then $dam_{L,P^i}(S,T) \leq 5$, we are done. 
	If $m$ is odd, then $m-\tau=5$ and $S_L(P^i)-n_im \geq 6$. Let  $S=\{c_0,c_1,c_2,c_3,c_4\} $ and $T=\{c'_0,c'_1,c'_2,c'_3,c'_4\}$, and we have  $dam_{L,P^i}(S,T) \leq 6$, we are also done.
	
 	Assume $\tau =2\lfloor\frac{m}{2}\rfloor-2$.  If $m$ is even, then $m-\tau=2$, and  $S_L(P^i)-n_im \geq 3$. Let $S=\{c_0,c_2\} $ and $T=\{c'_0,c'_2\}$. Then $dam_{L,P^i}(S,T) \leq 3$ for each $i$, so we are done. If $m$ is odd, $m-\tau=3$, and $S_L(P^i)-n_im \geq 4$. Let  $S=\{c_0,c_2,c_4\} $ and $T=\{c'_0,c'_2,c'_4\}$. Then  $dam_{L,P^i}(S,T) \leq 3$ for each $i$, and we are also done.
	
	This completes the proof of Theorem \ref{thm:f-main first part}.
	\end{proof}
	
	 \begin{corollary}
		\label{coro-theta-even}
		Suppose $G=\Theta_{2r,2s,2t}$ with $r,s,t \geq 2$, $u,v$ are the two vertices of degree $3$, L is a list assignment of G with $|L(x)|=2m$ if  $x \in \{u,v\}\cup N_G(u)$ and $|L(x)| \geq 2m+1$ for the other vertices. Then $G$ is $(L,m)$-colourable.
	\end{corollary}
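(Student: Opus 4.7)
The plan is to apply Theorem~\ref{thm:f-main first part} with $\ell=2m$ and $\tau=0$. Under the corollary's hypotheses, conditions (C1)--(C4) are immediate: $\ell$ and $\tau$ are non-negative even integers with $\tau=0\le 2\lfloor m/2\rfloor$ and $\ell+\tau=2m\ge 2\lceil m/2\rceil$; $|L(u)|=|L(v)|=2m=\ell$; $|L(v_1^i)|=2m=2m-\tau$ and $|L(v_{n_i}^i)|\ge 2m+1=2m+1-\tau$ (since $v_{n_i}^i\notin N_G(u)$); and $|L(w)|\ge 2m+1$ for every other interior vertex $w$. With these parameters, condition (C5) reduces to
$$S_L(P^i)-n_im\ \ge\ \max\{dam_{L,P^i}(L(u),L(v))-m+\tfrac{n_i-3}{2},\ m+\tfrac{n_i-1}{2}\}.$$
The second branch follows from Lemma~\ref{second-lower-bound-for-slp} (with $l_1=2m$ and $l_2=2m+1$): $S_L(P^i)\ge 2m+\tfrac{n_i-1}{2}(2m+1)=(n_i+1)m+\tfrac{n_i-1}{2}$. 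For the first branch, Lemma~\ref{first-lower-bound-for-slp} with the same parameters yields
$$S_L(P^i)\ \ge\ (n_i-1)m+\tfrac{n_i-3}{2}+|\hat X^i_1|+|\hat X^i_{n_i}|+|\Lambda^i|,$$
and combining with the inequality $dam_{L,P^i}(L(u),L(v))\le|\hat X^i_1|+|\hat X^i_{n_i}|+|\Lambda^i|$, which is immediate from Lemma~\ref{slp-dam}, completes the verification.

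Once (C1)--(C5) hold, Theorem~\ref{thm:f-main first part} produces $m$-sets $S\subseteq L(u)$ and $T\subseteq L(v)$ with $dam_{L,P^i}(S,T)\le S_L(P^i)-n_im$ for every $i\in\{0,1,2\}$. Pre-colour $u$ by $S$ and $v$ by $T$, and let $L'=L\ominus(S,T)$ on each path $P^i$ (the lists on interior vertices are unchanged). Then $|L'(v_1^i)|\ge 2m-m=m$, $|L'(v_{n_i}^i)|\ge(2m+1)-m=m+1$, $|L'(v_j^i)|\ge 2m+1\ge 2m$ for every interior $j$, and, by the definition of damage,
$$S_{L'}(P^i)\ =\ S_L(P^i)-dam_{L,P^i}(S,T)\ \ge\ n_im.$$
These are exactly the hypotheses of Lemma~\ref{42lemma}, which furnishes an $(L',m)$-colouring of each $P^i$; splicing these with the pre-colouring at $u,v$ produces the desired $(L,m)$-colouring of $G$.

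The most delicate point is the first branch of (C5): Lemma~\ref{first-lower-bound-for-slp} is stated with interior list sizes all equal to $l_2$, while in our setting the interior lists may strictly exceed $2m+1$. I would address this either by observing that adjoining extra colours to any interior list preserves the inequality of Lemma~\ref{first-lower-bound-for-slp} (so the lemma extends to the $\ge$ regime), or by first restricting each interior list to a $(2m+1)$-subset chosen to retain all witnesses for $\hat X^i_1,\hat X^i_{n_i},\Lambda^i$ and then applying the lemma verbatim.
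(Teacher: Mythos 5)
Your proposal is correct and follows essentially the same route as the paper: set $\ell=2m$, $\tau=0$, verify (C1)--(C5) using Lemmas~\ref{first-lower-bound-for-slp}, \ref{second-lower-bound-for-slp} and \ref{slp-dam}, invoke Theorem~\ref{thm:f-main first part} to obtain $S\subseteq L(u)$, $T\subseteq L(v)$ of size $m$ with $dam_{L,P^i}(S,T)\le S_L(P^i)-n_im$, and then extend. The only differences are expository and in your favour. First, the paper compresses the final step into ``which implies that $G$ is $(L,m)$-colourable,'' while you correctly spell out the extension via $L\ominus(S,T)$ and Lemma~\ref{42lemma}, verifying the list-size hypotheses $|L'(v_1^i)|\ge m$, $|L'(v_{n_i}^i)|\ge m$, $|L'(v_j^i)|\ge 2m$, and $S_{L'}(P^i)\ge n_im$. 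Second, the concern you flag about applying Lemma~\ref{first-lower-bound-for-slp} (which is stated with interior lists exactly of size $l_2$) when the interior lists may strictly exceed $2m+1$ is real, and the paper's own proof of this corollary passes over it silently; your first proposed fix is the right one and requires no extra work, since the inductive step of Lemma~\ref{slp-equal} (via Equality~(\ref{xn-1})) turns into $|X_{n-1}|\ge l_2-|X_{n-2}|+|X_{n-2}-L(v_{n-1})|$ when $|L(v_{n-1})|\ge l_2$, and the base case $n=3$ gives $S_L(P)=|L(v_2)|+|X_1-L(v_2)|+|X_3|\ge l_1+|X_1-L(v_2)|+|X_3|$ once $l_2\ge l_1$, so Lemma~\ref{first-lower-bound-for-slp} remains a valid lower bound in the $\ge$ regime.
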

	\begin{proof}
	Let $\ell =2m$ and $\tau=0$. By Lemma \ref{slp-dam}, $|\hat{X}^i_1|+|\hat{X}^i_n|+|\Lambda^i| \geq |\hat{X}^i_1 \cap L(u)|+|\hat{X}^i_n\cap L(v)|+|\Lambda^i\cap (L(u)\cup L(v))|=dam_{L,P^i}(L(u),L(v))$. Setting $l_1=2m$, $l_2=2m+1$, by Lemma \ref{first-lower-bound-for-slp},  $S_{L}(P) -n_im \ge   \frac{n_i-3}{2}-m+    |\hat{X}^i_1|+|\hat{X}^i_n|+|\Lambda^i| \geq \frac{n_i-3}{2}+m-\ell-\tau +  dam_{L,P^i}(L(u),L(v))$. On the other hand, by Lemma \ref{second-lower-bound-for-slp}, $S_{L}(P^i) \geq l_1+\frac{n_i-1}{2}l_2=n_im+m+\frac{n_i-1}{2}-\tau$. So \ref{C5} holds. Observe that $L$, $\ell$, $\tau$ also satisfies \ref{C1}-\ref{C4}. 
	By Theorem \ref{thm:f-main first part},
	 there exist     $S \subset L(u)$, $T \subset L(v)$ such that $|S|=|T|=m$ and $dam_{L,P^i}(S,T) \leq S_L(P^i) -n_im$,  which implies that $G$ is $(L,m)$-colourable.
	\end{proof}  
	
	\begin{corollary}
		\label{coro-theta-odd}
		Suppose $G=\Theta_{2r+1,2s+1,2t+1}$ with $r,s,t \geq 1$, $u,v$ are the two vertices of degree $3$, $L$ is a list assignment of G with $|L(x)|=2m$ if $x \in \{u,v\}$ and $|L(x)| \geq 2m+1$ for the other  vertices. Then $G$ is $(L,m)$-colourable.
	\end{corollary}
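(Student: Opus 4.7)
The plan is to mirror the proof of Corollary \ref{coro-theta-even}: find $m$-subsets $S\subseteq L(u)$ and $T\subseteq L(v)$, pre-colour $u$ with $S$ and $v$ with $T$, and extend to each path $P^i$ via Lemma \ref{42lemma}. Since the endpoints satisfy $|L(v_1^i)|,|L(v_{n_i}^i)|\geq 2m+1\geq m+1$ even after the removal of $S,T$, Lemma \ref{42lemma} applies, and the extension exists iff $dam_{L,P^i}(S,T)\leq S_L(P^i)-n_im$ for each $i$.

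The essential new feature, compared with $\Theta_{2r,2s,2t}$, is that the internal paths now have an even number $n_i\in\{2r,2s,2t\}$ of vertices, whereas Definition \ref{def-slp}, Lemma \ref{slp-dam}, Lemmas \ref{first-lower-bound-for-slp}--\ref{second-lower-bound-for-slp} and Theorem \ref{thm:f-main first part} were formulated for odd $n$. I would first install even-$n$ analogues. Keeping $\hat{X}_1^e=\hat{X}_1$, define
$$\hat{X}_n^e=\{c\in L(v_n)-\Lambda : \text{the largest } i \text{ with } c\notin L(v_i) \text{ is odd}\},$$
the parity flip at $v_n$ reflecting that $n$ is now even. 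A direct cascade argument then yields
$$dam_{L,P}(S,T)=|\hat{X}_1^e\cap S|+|\hat{X}_n^e\cap T|+|\Lambda\cap S\cap T|,$$
in which $\Lambda\cap S\cap T$ replaces the $\Lambda\cap(S\cup T)$ of the odd case: a colour in $\Lambda$ lying in only one of $S,T$ contributes zero net damage, because its sign-alternating cascade cancels over an even number of vertices. The induction of Lemma \ref{slp-equal} also adapts, giving $|X_n|=|\hat{X}_n^e|$ and hence $S_L(P)\geq l_1+\tfrac{n-2}{2}l_2+|\hat{X}_n^e|$, as well as (by symmetry) $S_L(P)\geq l_n+\tfrac{n-2}{2}l_2+|\hat{X}_1^e|$.

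I would then state the even-$n_i$ version of Theorem \ref{thm:f-main first part}, obtained by replacing $\tfrac{n_i-3}{2}$ with $\tfrac{n_i-2}{2}$ and $\tfrac{n_i-1}{2}$ with $\tfrac{n_i}{2}$ in \ref{C5} (with a careful treatment of the $|\Lambda|$ term forced by the new damage formula). The proof of this even version is a direct transcription of Theorem \ref{thm:f-main first part}: the induction on $2\ell+\tau$, the heavy/light/safe classification of couples (defined through $dam$ alone), the additivity from Lemma \ref{lem-sum}, and the counting bound from Lemma \ref{main-lemma} together with Observation \ref{obs:about main lemma} are insensitive to the parity of $n_i$, and the six-couple closing case analysis goes through word-for-word. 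Setting $\ell=2m,\tau=0$ and verifying \ref{C5} via the new lower bounds, exactly as in the proof of Corollary \ref{coro-theta-even}, yields the required $S$ and $T$. The main obstacle is the even-$n$ damage identity: the shift from $\Lambda\cap(S\cup T)$ to $\Lambda\cap S\cap T$ must be threaded carefully through Lemma \ref{lem-sum} and the heavy/light/safe classification; once this substitution is confirmed, the remainder is purely a notational translation of the odd case.
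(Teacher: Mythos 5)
Your approach is a genuinely different route from the paper's, and it is far heavier than what the paper actually does. The paper does not redevelop any even-path machinery. It makes the corollary a one-paragraph consequence of Corollary~\ref{coro-theta-even} by a splitting trick: form $G'=\Theta_{2r+2,2s+2,2t+2}$ by replacing $u$ with three degree-$1$ copies $u_1,u_2,u_3$ (one on each path) together with a new vertex $u'$ adjacent to all three, and set $L'(u')=L'(u_1)=L'(u_2)=L'(u_3)=L(u)$, keeping all other lists. Since $r,s,t\geq 1$ forces $r+1,s+1,t+1\geq 2$, and since in $G'$ the vertices with lists of size $2m$ are exactly $\{u',v\}\cup N_{G'}(u')$, Corollary~\ref{coro-theta-even} applies and gives an $(L',m)$-colouring $\phi'$. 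Because $|L(u)|=2m$ and each $u_i$ is adjacent to $u'$, we must have $\phi'(u_i)=L(u)-\phi'(u')$ for $i=1,2,3$; in particular $\phi'(u_1)=\phi'(u_2)=\phi'(u_3)$, so contracting $u_1,u_2,u_3$ back into a single vertex $u$ (and discarding $u'$) yields an $(L,m)$-colouring of $G$. This is the whole proof.

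By contrast, you propose to re-derive the entire $\hat{X}$/damage apparatus for even-length internal paths and to prove an even-$n$ analogue of Theorem~\ref{thm:f-main first part}. Your key identity, that for even $n$ one has $dam_{L,P}(S,T)=|\hat{X}_1^e\cap S|+|\hat{X}_n^e\cap T|+|\Lambda\cap S\cap T|$ with the parity of $\hat{X}_n^e$ flipped, is correct (for $n=2$ it reduces to $|A\cup B|-|(A-S)\cup(B-T)|$, which gives exactly this). But your assertion that ``the six-couple closing case analysis goes through word-for-word'' is too optimistic. In the even case a couple $(c,c')$ can be heavy only if $c\in\hat{X}_1^e$ \emph{and} $c'\in\hat{X}_n^e$; a colour lying in $\Lambda$ never makes a couple heavy, whereas in the odd case $\Lambda$-colours routinely produce damage $2$. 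Likewise the even analogue of Lemma~\ref{first-lower-bound-for-slp} cannot carry a $+|\Lambda|$ term (already for $n=2$ one has $S_L(P)=l_1+|\hat{X}_2^e|$, with no slack for $|\hat{X}_1^e|+|\Lambda|$), so the inequality feeding into your modified~\ref{C5} would have to be restructured, not merely shifted by $1/2$. You flag this as the ``main obstacle'' yourself, which is honest, but it means the argument as written is not complete. The reduction in the paper sidesteps all of this at essentially no cost, which is why it is worth knowing.
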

	\begin{proof}
		Let $G'=\Theta_{2r+2,2s+2,2t+2}$ be obtained from $G$ by splitting $u$ into three vertices $u_1, u_2, u_3$ of degree $1$ (each adjacent to one neighbor of $u$), adding a vertex $u'$ adjacent to $u_1,u_2,u_3$.   Let $L'$ be a list assignment of $G'$ with $L'(x)=L(u)$ if $x \in \{u', u_1,u_2,u_3\}$,  and $L'(x)=L(x)$ for other vertices.	
		By Corollary \ref{coro-theta-even}, $G'$ is $(L',m)$-colourable and assume $\phi'$ is such an $(L',m)$-colouring of $G'$. Observe that for each $x \in \{u_1,u_2,u_3\}$, $\phi'(x)=L'(u')-\phi'(u)$. Now let $\phi$ be a $(L,m)$-colouring of $G$ as follows: $\phi(u)=\phi'(u_1)$, and $\phi(x)=\phi'(x)$ for $x \in V(G)-\{u\}$. It is clear that $\phi$ is a proper $(L,m)$-colouring of $G$.
	\end{proof}

\section{Proof of Theorem \ref{thm-main} for  $\Theta_{2,2,2,2p}$}
\label{sec-main-thm-second}

In this section,  $G=\Theta_{2,2,2,2p}$ with $p \geq 1$, $u,v$ are the two vertices of degree $4$, and $P^0,P^1,P^2, P^3$ are the four paths of $G-\{u,v\}$.
Similarly, assume $P^i=(v^i_1,v^i_2,\ldots,v^i_{n_i} )$,  $v^i_1$ is adjacent to $u$ and $v^i_{n_i}$ is adjacent to $v$, where $n_0=n_1=n_2=1$ and $n_3 \geq 1$.  
We shall use the notation introduced in Section \ref{sec-main-thm-first}.
  
Similarly, instead of proving directly that $G$ is $(2m+1,m)$-choosable, we prove the following stronger and more technical result.

\begin{theorem}
	\label{thm:s-main-second-part-stronger}
	Assume $\ell$ and $\tau$ are non-negative  integer, $L$ is a list assignment for $G$ satisfying the following:
	\begin{enumerate}[label= {(T\arabic*)}]
		\item \label{T1}   $\tau \leq m$ and $\ell +\tau \geq m$.
		\item \label{T2} $|L(u)|=|L(v)| = \ell \ge 0$.
		\item \label{T3} For each $i \in \{0,1,2,3\}$,  $|L(v^i_1)| \geq 2m+1-\tau$. If $n_3 \geq 3$, then $|L(v^3_{n_3})|\geq 2m+1-\tau$. 
		\item \label{T4}  $|L(w)|\geq 2m+1$ for $w \ne u,v, v^i_1, v^i_{n_i}$.
		\item \label{T5}  For $i=0,1,2,3$,    $$S_L(P^i)-n_im \ge \max\{ \frac{n_i+1}{2}+ m -\ell-\tau + dam_{L,P^i}(L(u),L(v)), \frac{n_i+1}{2}+m-\tau\}.$$
	\end{enumerate}	
	Then there exists a set $S \subset L(u)$ and a set $T \subset L(v)$ satisfying $|S|=|T|=m-\tau$ such that for each $i$, $$dam_{L,P^i}(S,T) \leq S_L(P^i)-n_im.$$
\end{theorem}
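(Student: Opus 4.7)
The plan is to prove Theorem \ref{thm:s-main-second-part-stronger} by induction on $2\ell+\tau$, in close parallel to the proof of Theorem \ref{thm:f-main first part}. For the base case, \ref{T1} together with $\ell=0$ forces $\tau=m$; take $S=T=\emptyset$ and verify via \ref{T5} that $dam_{L,P^i}(S,T)=0 \leq S_L(P^i)-n_im$ for every $i$. For the inductive step, fix a consistent indexing of $L(u)$ and $L(v)$ into couples $c_jc'_j$, and classify each couple as heavy, light, or safe for each $P^i$. For the three single-vertex paths $P^i=(w_i)$, $i\in\{0,1,2\}$, the structure collapses: $\hat{X}^i_1=\hat{X}^i_{n_i}=\emptyset$ and $\Lambda^i=L(w_i)$, so by Observation \ref{obs-couple} a couple is heavy iff both $c_j,c'_j\in L(w_i)$ with $c_j\neq c'_j$, safe iff neither lies in $L(w_i)$, and light otherwise. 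For $P^3$ the usual classification applies and Lemma \ref{main-lemma} bounds the number $\beta(P^3)$ of bad simple pairs of size $m-\tau$.

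Assuming to the contrary that no valid $(S,T)$ exists, I would reuse the two reduction devices from Section \ref{sec-main-thm-first}: if there is a simple sub-pair $(D_u,D_v)$ of even size $d>0$ with $dam_{L,P^i}(D_u,D_v)\geq d$ for every $i$ (and $d\leq \ell+\tau-m$), delete it by passing to $L'$ with $L'(u)=L(u)\setminus D_u$, $L'(v)=L(v)\setminus D_v$, update parameters to $\ell'=\ell-d,\tau'=\tau$, verify \ref{T1}--\ref{T5} for $L'$, and invoke induction. Symmetrically, if $dam_{L,P^i}(D_u,D_v)\leq d$ for every $i$ (with $d\leq m-\tau$), absorb it by additionally deleting $D_u$ from each $L(v^i_1)$ and $D_v$ from $L(v^3_{n_3})$, update to $\ell'=\ell-d,\tau'=\tau+d$, and apply induction to obtain a pair of size $m-\tau-d$ to which $(D_u,D_v)$ is appended. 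The invariants \ref{T1}--\ref{T5} are preserved thanks to the way $dam_{L,P^i}(L(u),L(v))$ is built into the lower bound in \ref{T5}, exactly as in Claims \ref{all at least light} and \ref{all at most light}.

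The main obstacle lies in the structural case analysis forcing one of the two reductions to apply. With four paths instead of three, the bound $\beta(P^i)<\tfrac12\binom{\ell}{m-\tau}$ supplied by Observation \ref{obs:about main lemma} does not immediately yield $\sum_i\beta(P^i)<\binom{\ell}{m-\tau}$, so the counting argument alone is insufficient. The crucial leverage is that three of the paths are single vertices, which both sharpens the count of bad simple pairs for $P^0,P^1,P^2$ (the damage depends only on $|L(w_i)\cap(S\cup T)|$) and constrains heavy couples for $P^i$ to have both colours inside $L(w_i)$. After deriving analogues of Claims \ref{claim:2x+y-upper bound and xz at least one} and \ref{claim:evey couple is HSL} — in particular $x^{(i)}\geq 2$ and $z^{(i)}\geq 1$ for each $i$, plus the fact that no couple is heavy, safe, or simultaneously light for too many paths — one selects eight distinguished couples (two heavy for each of the four paths) and exploits the combinatorial constraints on their joint heavy/light/safe labels to exhibit a simple sub-pair triggering one of the two reductions. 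The endgame where $\tau$ is close to $m$ (so $m-\tau \in \{2,3,4,5\}$) is handled by an explicit construction of $(S,T)$ from these distinguished couples, using the slack in \ref{T5} exactly as in the final paragraphs of the proof of Theorem \ref{thm:f-main first part}.
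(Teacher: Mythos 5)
Your overall framework is on the right track (induction on $2\ell+\tau$, base case $\ell=0,\tau=m$, two deletion/absorption reductions analogous to Claims~\ref{all at least light} and~\ref{all at most light}, followed by a structural analysis of heavy/light/safe couples), and your observation that the four-path $\beta$-counting bound alone is insufficient, together with the identification of the single-vertex structure of $P^0,P^1,P^2$, is a genuine insight that matches the paper. However, there are two concrete gaps in how you set up the reduction devices. First, you insist that the simple sub-pair $(D_u,D_v)$ have \emph{even} size $d$. That parity restriction is correct in Theorem~\ref{thm:f-main first part} (where $\ell,\tau$ are even and the induction must preserve parity), but here $\ell$ and $\tau$ are allowed to be odd, and the paper relies on odd~$d$ at multiple points: Claim~\ref{claim:s-evey couple heavy or safe for at most two} uses $d=1$, and Claim~\ref{claim:s-no simple pair of size 3 with damage at most 3} uses $d=3$. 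Imposing evenness would block these steps. Second, and more seriously, the paper's Claims~\ref{claim:s-all at least light} and~\ref{claim:s-all at most light} carry the extra escape disjunction ``$x^{(i)}=0$ or $dam_{L,P^i}(D_u,D_v)\geq d$'' (resp.\ ``$z^{(i)}=0$ or $dam\leq d$''), whereas you ask for $dam\geq d$ (resp.\ $\leq d$) uniformly across all four paths. The escape clause is precisely what makes Claim~\ref{claim:s-x and z is at least one} provable: when you assume $x^{(0)}=0$ (say) and try to rule out a couple heavy for two of $P^1,P^2,P^3$, the resulting sub-pair need not be heavy for $P^0$, and without the escape clause your reduction claim simply cannot be invoked. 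Without establishing $x^{(i)},z^{(i)}\geq 1$ for all $i$ the rest of the structural argument has no purchase.

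There are also smaller discrepancies suggesting the later structural stage would need to be reworked: the paper proves only $x^{(i)},z^{(i)}\geq 1$ (not $x^{(i)}\geq 2$ as you project from the three-path case), works with \emph{four} distinguished couples, one heavy for each path (Table~\ref{table2}), rather than eight, proves the stronger statements that every couple is heavy for \emph{exactly} one path and safe for \emph{exactly} one path (Claims~\ref{claim:s-every couple is heavy for at most one path},~\ref{claim:s-every couple is safe for at most one path}), and the endgame only needs $m-\tau\in\{2,3\}$, with $m-\tau\geq 4$ handled by the four-couple simple pair. Finally, a minor slip: in the absorption reduction you say you delete $D_u$ from $L(v^i_1)$ and $D_v$ from $L(v^3_{n_3})$, but for the single-vertex paths (and $P^3$ when $n_3=1$) the single vertex is adjacent to both $u$ and $v$, so one must delete $D_u\cup D_v$ there, as the paper does.
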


\begin{proof}  Before the proof, we observe that Theorem \ref{thm:s-main-second-part-stronger} is similar to Theorem \ref{thm:f-main first part}. However, besides these two theorems refer to different graphs, there is another subtle difference: $\ell$ and $\tau$ are allowed to be odd in Theorem \ref{thm:s-main-second-part-stronger}.

The proof is by induction on $2\ell+\tau$.  First assume that $2\ell + \tau =m$. Since $\ell + \tau \geq m$ and $\ell, \tau$ are non-negative, we have that $\ell = 0$ and $\tau = m                                                                                                                                                                                                                                                                                                                                                                                                                                                                                                                                                                                                                                                                                                                                                                                                                                                                                                                                                                                                                                                                                                                                                                                                                                                                                                                                                                                                                                                                                                                                                                                                                                                                                                                                                                                                                                                                                                                                                                                                                                                                                                                                                                                                                                                                                                                                                                                                                                                                                                                                                                                                                                                                                                                                                                                                                                                                                                                                                                                                                                                                                                                                                                                                                                                                                                                                                                                                                                                                                                                                                                                                                                                                                                                                                                                                                                                                                                                                                                                                                                                                                                                                                                                                                                                                                                                                                                                                                                                                                                                                                                                                                                                                                                                                                                                                                                                                                                                                                                                                                                                                                                                                                                                                                                                                                                                                                                                                                                                                                                                                                                                                                                                                                                                                                                                                                                                                                                                                                                                                                                                                                                                                                                                                                                                                                                                                                                                                                                                                                                                                                                                                                                                                                                                                                                                                                                                                                                                                                                                                                                                                                                                                                                                                                                                                                                                                                                                                                                                                                                                                                                                                                                                                                                                                                                                                                                                                                                                                                                                                                                                                                                                                                                                                                                                                                                                                                                                                                                                                                                                                                                                                                                                                                                                                                                                                                                                                                                                                                                                                                                                                                                                                                                                                                                                                                                                                                                                                                                                                                                                                                                                                                                                                                                                                                                                                                                                                                                                                                                                                                                                                                                                                                                                                                                                                                                                                                                                                                                                                                                                                                                                                                                                                                                                                                                                                                                                                                                                                                                                                                                                                                                                                                                                                                                                                                                                                                                                                                                                                                                                                                                                                                                                                                                                                                                                                                                                                                                                                                                                                                                                                                                                                                                                                                                                                                                                                                                                                                                                                                                                                                                                                                                                                                                                                                                                                                                                                                                                                                                                                                                                                                                                                                                                                                                                                                                                                                                                                                                                                                                                                                                                                                                                                                                                                                                                                                                                                                                                                                                                                                                                                                                                                                                                                                                                                                                                                                                                                                                                                                                                                                                                                                                                                                                                                                                                                                                                                                                                                                                                                                                                                                                                                                                                                                                                                                                                                                                                                                                                                                                                                                                                                                                                                                                                                                                                                                                                                                                                                                                                                                                                                                                                                                                                                                                                                                                                                                                                                                                                                                                                                                                                                                                                                                                                                                                                                                                                                                                                                                                                                                                                                                                                                                                                                                                                                                                                                                                                                                                                                                                                                                                                                                                                                                                                                                                                                                                                                                                                                                                                                                                                                                                                                                                                                                                                                                                                                                                                                                                                                                                                                                                                                                                                                                                                                                                                                                                                                                                                                                                                                                                                                                                                                                                                                                                                                                                                                                                                                                                                                                                                                                                                                                                                                                                                                                                                                                                                                                                                                                                                                                                                                                                                                                                                                                                                                                                                                                                                                                                                                                                                                                                                                                                                                                                                                                                                                                                                                                                                                                                                                                                                                                                                                                                                                                                                                                                                                                                                                                                                                                                                                                                                                                                                                             $. By \ref{T5}, for each $i \in \{0,1,2,3\}$, 
$S_L(P^i)-n_im \geq \frac{n_i+1}{2}+m-\tau \geq 1$.
Let $S=L(u)= \emptyset$, $T=L(v) =\emptyset$, and we are done.  

Assume that  $2\ell+\tau \geq m+1$. If $\ell+\tau=m$, then let $S=L(u)$, $T=L(v)$.  If $\ell + \tau = m+1$, then we let $(S,T)$ be arbitrary simple pair of size $(\ell-1)$. 
In either case, for $i=0,1,2,3$,
\begin{align*} 
dam_{L,P^i}(S,T)  & \leq  dam_{L,P^i}(L(u),L(v))  \\
				  & \leq S_L(P^i)-n_im-\frac{n_i+1}{2}-m+\ell+\tau \\
				  & \leq  S_L(P^i)-n_im-\frac{n_i+1}{2}-m+(m+1) \\
				  & = S_L(P^i)-n_im-\frac{n_i-1}{2} \\
				  & \leq  S_L(P^i)-n_im.
\end{align*}

So we are done. Thus we assume that $\ell +\tau \geq m+2$. 

If $\tau =m$, then let $S=T=\emptyset$ and we are done. If $\tau=m-1$, then let $(S,T)$ be any simple pair of size $1$, we have $dam_{L,P^i}(S,T) \leq 2 \le S_L(P^i)-n_im$ by \ref{T5}.   

In the sequel, we assume $\tau \leq m-2$. Assume to the contrary that Theorem \ref{thm:s-main-second-part-stronger} is not true for $L$.

\begin{claim}
	\label{claim:s-all at least light}
There is no simple pair $(D_u,D_v)$ such that $|D_u|=|D_v|=d \le \ell -m+\tau$, and for each $i \in \{0,1,2,3\}$,   $x^{(i)}=0$ or  $dam_{L,P^i}(D_u,D_v) \geq d$. 
\end{claim}

\begin{proof}
	Assume $(D_u,D_v)$ is such a pair. Let $L'$ be a new list assignment for $G$ with $L'(u)= L(u)-D_u$, $L'(v)=L(v)-D_v$, $L'(w)=L(w)$ for $w \in V(G) \setminus \{u,v\}$. 
	
	\ref{T1}-\ref{T4} of Theorem \ref{thm:s-main-second-part-stronger}  are easily seen to be satisfied by $L'$, with $ \ell'= \ell - d$ and $\tau'=\tau$. Note that $S_{L'}(P^i)-n_im=S_{L}(P^i)-n_im \geq m+\frac{n_i+1}{2}-\tau=m+\frac{n_i+1}{2}-\tau'$. On the other hand,   note that $dam_{L',P^i}(L'(u),L'(v))=dam_{L,P^i}(L'(u),L'(v))$.  So if $dam_{L,P^i}(D_u,D_v) \geq d$, then by Lemma \ref{lem-sum},   $dam_{L,P^i}(L(u), L(v)) = dam_{L',P^i}(L'(u), L'(v)) + dam_{L,P^i}(D_u,D_v) \geq dam_{L',P^i}(L'(u), L'(v)) +d$.  
	So
	\begin{eqnarray*}
		S_{L'}(P^i) - n_im  & = & S_L(P^i)-n_im \\
		& \geq &  m+ \frac{n_i+1}{2}+dam_{L,P^i}(L(u), L(v))-\ell-\tau,   \\
		& \geq &  m+ \frac{n_i+1}{2}+dam_{L',P^i}(L'(u), L'(v))-\ell'-\tau.
	\end{eqnarray*}
	If $x^{(i)}=0$, then  for every couple $cc'$, $dam_{L,P^i}(c, c') \leq 1$,   so  
	 $$dam_{L',P^i}(L'(u),L'(v)) \leq \ell-d=\ell'  \leq \ell'+ (S_{L'}(P^i)-n_jm-\frac{n_i+1}{2}-m+\tau'),$$  
	 which implies that $$ S_{L'}(P^i)-n_im \geq \frac{n_i+1}{2}+m+dam_{L',P^i}(L'(u),L'(v))-\ell'-\tau'.$$  
	Hence, \ref{T5} is   satisfied by $L'$.
	By induction hypothesis,   there exists a pair $(S,T)$, where $|S|=|T|=m-\tau$ such that for each $i \in \{0,1,2,3\}$, $dam_{L,P^i}(S,T) \leq S_L(P^i)-n_im$.  This completes the proof of this  claim.
\end{proof}

\begin{claim}
	\label{claim:s-all at most light}
	There does not exist simple pair $(D_u,D_v)$ such that $|D_u|=|D_v|=d \leq m-\tau$,  and
	for each $i \in \{0,1,2,3\}$,   $z^{(i)}=0$ or  $dam_{L,P^i}(D_u,D_v) \leq d$. 
\end{claim} 

\begin{proof}
	Assume   $(D_u,D_v)$ is such a simple pair. Let $L'$ be a new list assignment for $G$ with $L'(u)=L(u)-D_u$, $L'(v)=L(v)-D_v$,  $L'(v^i_1)=L(v^i_1)-D_u \cup D_v$ for $i=0,1,2$. 
	If $n_3=1$, then $L'(v^3_1)=L(v^3_1)-D_u \cup D_v$. Otherwise, $L'(v^3_1)= L(v^3_1)-D_u$, $L'(v^3_{n_3})=L(v^3_{n_3})-D_v$,  $L'(v^3_j)=L(v^3_j)$ where $1 < j < n_3$.
	
	\ref{T1}-\ref{T2} and \ref{T4} of Theorem \ref{thm:s-main-second-part-stronger}  are easily seen to be satisfied by $L'$, with $ \ell'= \ell - d$ and $\tau'=\tau+d$.

	It is obvious that \ref{T3} is  satisfied by $(L',P^i)$ when $n_i \geq 3$.
	Now we show that \ref{T3} is also satisfied when $n_i=1$. Assume $i \in \{0,1,2,3\}$ and $n_i=1$. If $dam_{L,P^i}(D_u,D_v) \leq d$, then $|L(v^i_1)-D_u \cup D_v| \ge |L(v^i_1)|-d$, so \ref{T3} is satisfied by $L'$. Assume $z^{(i)}=0$. Then for any couple $cc'$, $dam_{L,P^i}(c,c') \geq 1$, which implies that $dam_{L,P^i}(L(u),L(v)) \geq dam_{L,P^i}(D_u,D_v)+\ell-d$ (using Lemma \ref{lem-sum}). By \ref{T5},
	\begin{align*} 
	S_L(P^i) & \geq  \frac{n_i+1}{2}+m-\ell-\tau+dam_{L,P^i}(L(u),L(v))+n_im\\
	& \geq 2m+1-(\tau+d)+dam_{L,P^i}(D_u,D_v)\\
	& = 2m+1-\tau'+dam_{L,P^i}(D_u,D_v). 
	\end{align*}  
	As $n_i=1$ implies that $|L(v^i_1)|=S_L(P^i)$, $|L(v^i_1)-D_u\cup D_v|=|L(v^i_1)|-dam_{L,P^i}(D_u,D_v)=S_{L}(P^i)-dam_{L,P^i}(D_u,D_v) \geq 2m+1-\tau'$. So \ref{T3} is also satisfied by $L'$ in this case.
	
	Next, we show that \ref{T5} is satisfied by $L'$.
	By Lemma \ref{lem-sum}, $dam_{L,P^i}(L(u), L(v)) = dam_{L',P^i}(L'(u), L'(v)) + dam_{L,P^i}(D_u,D_v)$, so
	\begin{align}
	S_{L'}(P^i) - n_im  & =  S_L(P^i)-n_im -dam_{L,P^i}(D_u,D_v) \label{eq-s-claim4-2-1}  \\
	& \geq  m+ \frac{n_i+1}{2}+dam_{L,P^i}(L(u), L(v))-\ell-\tau- dam_{L,P^i}(D_u,D_v)  \nonumber  \\
	& =  m+ \frac{n_i+1}{2}+dam_{L',P^i}(L'(u), L'(v))-\ell'-\tau'. \label{eq-s-claim4-2-3}
	\end{align}
	Now it suffices to prove that $S_{L'}(P^i)-n_im \geq m+\frac{n_i+1}{2}-\tau'$. Indeed, if 
	 $z^{(i)}=0$, then  for each couple $cc'$, $dam_{L',P^i}(c,c') \geq 1$. By Lemma \ref{lem-sum}, $dam_{L',P^i}(L'(u),L'(v)) \geq \ell-d=\ell'$. By Inequality (\ref{eq-s-claim4-2-3}), we are done. By \ref{T5}, $S_{L}(P^i)-n_im \geq \frac{n_i+1}{2}+m-\tau$. If $dam_{L,P^i}(D_u,D_v) \leq d$,  then by Equality (\ref{eq-s-claim4-2-1}), $$S_{L'}(P^i)-n_im \geq  \frac{n_i+1}{2}+m-\tau-d= \frac{n_i+1}{2}+m-\tau'.$$
  	Therefore, \ref{T5} is  satisfied by $L'$.
	
	By induction,   there exists a pair $(S',T')$, where $|S'|=|T'|=m-\tau' = m-\tau-d$ such that for every $i$, $$dam_{L',P^i}(S',T') \leq S_{L'}(P^i)-n_im.$$  Let $S=S' \cup D_u$ and  $T=T' \cup D_v$.   As $S' \cap D_u = \emptyset$ and $T' \cap D_v = \emptyset$,  $dam_{L',P^i}(S',T'))=dam_{L,P^i}(S',T')$.  So we have $|S|=|T|=m-\tau$ and 
	\begin{align*} 
	dam_{L,P^i}(S,T) & \leq dam_{L,P^i}(D_u,D_v)+dam_{L,P^i}(S',T') \\
	& \leq dam_{L,P^i}(D_u,D_v)+  S_{L'}(P^i)-n_im\\
	& = S_{L}(P^i)-n_im. 
	\end{align*} 
	
	This completes the proof of Claim \ref{claim:s-all at most light}.
\end{proof}

Claim \ref{claim:s-bad simple pair} follows directly from the definitions and \ref{T5}.

\begin{claim}
	\label{claim:s-bad simple pair}
	If $(S,T)$ is a bad simple pair of size $m-\tau$ with respect to $(L, P^i)$, then 
	$dam_{L,P^i}(S,T) = 2a^{(i)}(S,T)+b^{(i)}(S,T) \geq \max\{2x^{(i)}+y^{(i)} +m+\frac{n_i+3}{2}-\ell-\tau, m+\frac{n_i+3}{2}-\tau\}$.
\end{claim}

\begin{claim}
	\label{claim:s-no simple pair of size 3 with damage at most 3}
	There is no  simple pair $(S_0,T_0)$ of size $3$ such that $dam_{L,P^i}(S_0,T_0) \leq 3$ for each $i \in \{0,1,2,3\}$. 	
\end{claim}
\begin{proof}
	Assume the claim is not true, and assume $S_0=\{c_1,c_2,c_3\}$, $T_0=\{c'_p,c'_q,c'_r\}$.
	If $m-\tau \geq 3$, then by Claim \ref{claim:s-all at most light}, this is a contradiction. Thus assume that $m-\tau \leq 2$. Recall that in the beginning of the proof of Theorem \ref{thm:s-main-second-part-stronger}, we argued that $m-\tau \geq 2$, so $m-\tau =2$.
	By \ref{T5}, $S_L(P^i)-n_im \geq \frac{n_i+1}{2}+m-\tau \geq m-\tau +1 \geq 3$. Then any simple pair $(S,T)$ of size $2$ with $S \subseteq S_0, T \subseteq T_0$  satisfies the theorem, a contradiction.
\end{proof}

\begin{claim}
	\label{claim:s-x or z=0 implies that beta=0}
	For each $i \in \{0,1,2,3\}$, $x^{(i)}=0$ or $z^{(i)}=0$ implies that $\beta(P^i)=0$. 
\end{claim}
\begin{proof}
	If $x^{(i)}=0$, then  $dam_{L,P^i}(S,T) \leq m-\tau$ for any simple pair $(S,T)$ of size $m-\tau$.  By \ref{T5},  $S_L(P^i)-n_im \geq \frac{n_i+1}{2}+m-\tau \geq m-\tau +1$. So $(S,T)$ is not bad with respect to $(L,P^i)$, hence $\beta(P^i)=0$.
	
	If $z^{(i)}=0$, then	$x^{(i)}+y^{(i)}= \ell$ and for any simple pair $(S,T)$ of size $m-\tau$, $a^{(i)}(S,T)+b^{(i)}(S,T)= m-\tau$. If $(S,T)$ is bad with respect to $(L,P^i)$, then by Claim \ref{claim:s-bad simple pair}, we have 
	\begin{align*}
	a^{(i)}(S,T) +m-\tau & = 2a^{(i)}(S,T)+b^{(i)}(S,T)  \\
	& \geq 2x^{(i)}+y^{(i)} +m+\frac{n_i+3}{2}-\ell-\tau\\
	& \geq  x^{(i)} + \ell+m+\frac{n_i+3}{2}-\ell-\tau\\
	& = x^{(i)}+m-\tau+ \frac{n_i+3}{2}.
	\end{align*}
	This implies that $a^{(i)}(S,T) \geq x^{(i)}+2$, in contrary to that  $a^{(i)}(S,T) \leq x^{(i)}$.  So any simple pair $(S,T)$ of size $m-\tau$ is not bad with respect to $(L,P^i)$, hence $\beta(P^i)=0$.
\end{proof}

\begin{claim}
	\label{claim:s-x and z is at least one}
	For each $j \in \{0,1,2,3\}$, $x^{(j)},z^{(j)} \geq 1$. 
\end{claim}

\begin{proof}
	Assume to the contrary, $x^{(j)}=0$ or $z^{(j)}=0$ for some $j$. 	Then $\beta(P^j)=0$ By Claim \ref{claim:s-x or z=0 implies that beta=0}. 
	For conveniece, we let $j=0$ below, but do not use the fact that $n_1=0$ so that the argument also works for $j=3$.

	We first show that $x^{(i)}, z^{(i)} \geq 1$ for $i \ne 0$. Indeed, if this fails for some $i$, then by Claim \ref{claim:s-x or z=0 implies that beta=0}, $\beta(P^i)=0$. Thus by Claim \ref{claim:s-bad simple pair}  and Observation \ref{obs:about main lemma} (setting $m-\tau=k$), $\sum_{i=0}^{3}\beta(P^i) < \binom{\ell}{m-\tau}$. So there  exists a simple pair of size $m-\tau$ which is not bad with respect to any $(L,P^i)$, a contradiction.

	Next we show that every couple is heavy (respectively, safe, light) for at most one of  $P^1, P^2, P^3$.  
	
	
	 Assume to the contrary, $c_jc'_j$ is heavy for two paths, say for both $P^1$ and $P^2$.  By Claim \ref{claim:s-all at least light},  $c_jc'_j$ is safe for $P^3$. As $x^{(3)} \geq 1$, there exists a couple $c_kc'_k$ which is heavy for $P^3$. 
	 Then for $D_u = \{c_j,c_k\}, D_v =\{c'_j,c'_k\})$, we have $dam_{L, P^{(i)}}(D_u, D_v) \ge 2$ for $i=1,2,3$, and for $i=0$,  either $x^{(0)}=0$,  or  $z^{(0)}=0$ which means that $dam_{L, P^{(0)}}(D_u, D_v) \ge 2$. In either case, it contradicts   Claim \ref{claim:s-all at least light}. 
	 
	Similarly, if $c_jc'_j$ is safe for $P^1$ and $P^2$, then by Claim \ref{claim:s-all at most light}, $c_jc'_j$ is not safe for $P^3$. As $z^{(3)} \geq 1$, so there exists a couple $c_kc'_k$ which is safe for $P^3$. Then for $D_u = \{c_j,c_k\}, D_v =\{c'_j,c'_k\}$, we have $dam_{L, P^{(i)}}(D_u, D_v) \le 2$ for $i=1,2,3$, and for $i=0$,  either $z^{(0)}=0$, or  $x^{(0)}=0$ which means that $dam_{L, P^{(0)}}(D_u, D_v) \le 2$. In either case, it contradicts  Claim \ref{claim:s-all at most light}.   
	
	Assume $c_jc'_j$ is light for $P^1$ and $P^2$. If $c_jc'_j$ is safe for $P^3$, then  $c_jc'_j$ is a simple pair contradicting  Claim \ref{claim:s-all at most light}. Otherwise,  
	$c_jc'_j$ is a simple pair contradicting  Claim \ref{claim:s-all at least light}.
 
Without loss of generality, assume $c_1c'_1$ is heavy for $P^1$, light for $P^2$ and safe for $P^3$.  Assume that 
	 $c_2c'_2$ is heavy for $P^2$, $c_3c'_3$ is heavy for $P^3$. Then $c_3c'_3$ is light for $P^1$ and safe for $P^2$, for otherwise, $(\{c_1,c_3\},\{c'_1,c'_3\})$ is a simple pair of size $2$ contradicting  Claim \ref{claim:s-all at most light}. 
	 Similarly, $c_2c'_2$ is light for $P^3$ and safe for $P^1$, for otherwise, $(\{c_2,c_3\},\{c'_2,c'_3\})$ is a simple pair of size $2$ contradicting  Claim \ref{claim:s-all at most light}. 
	
	If $x^{(0)}=0$, then $(\{c_1,c_2,c_3\},\{c'_1,c'_2,c'_3\})$ is a simple pair of size $3$ contradicting  Claim \ref{claim:s-no simple pair of size 3 with damage at most 3}. 
	
	Assume that $x^{(0)}\geq 1$ and $z^{(0)}=0$. If $m-\tau \geq 3$, then $(\{c_1,c_2,c_3\},\{c'_1,c'_2,c'_3\})$ is a simple pair of size $3$ contradicting  Claim \ref{claim:s-all at most light}. Assume $m-\tau=2$. By \ref{T5},
	 $$S_L(P^i)-n_im \ge \max\{dam_{L,P^i}(L(u),L(v))-\ell+3, 3\} \geq 3.$$ 
	Note that $x^{(0)}\geq 1$ and $z^{(0)}=0$ implies that $dam_{L,P^0}(L(u),L(v)) \geq 2+(\ell-1)=\ell+1$. Therefore, $S_L(P^0)-n_0m \geq 4$. Let $S=\{c_1,c_2\}$, $T=\{c'_1,c'_2\}$. Then $(S,T)$ is a pair of size $m-\tau=2$ satisfying Theorem \ref{thm:s-main-second-part-stronger}.
 
 This completes the proof of this claim.
\end{proof}
 
\begin{claim} 
	\label{claim:s-evey couple heavy or safe for at most two} 
	Every couple is heavy (respectively, light, safe) for at most two internal paths. 
\end{claim}

\begin{proof} 
	If there exists a couple $c_jc'_j$ which is light for at least three couples, then $c_jc'_j$ is counterexample with $d=1$ to either Claim \ref{claim:s-all at least light} or Claim \ref{claim:s-all at most light}.
	
	By Claim \ref{claim:s-all at least light}, every couple is heavy for at most three internal paths. If there exists a couple, say $c_jc'_j$ which is heavy for all the internal paths except $P^i$ for some $i \in \{0,1,2,3\}$. By Claim \ref{claim:s-x and z is at least one}, $x^{(i)} \geq 1$, there exists a heavy couple $c_kc'_k$ for $P^i$. Then $(\{c_j,c_k\},\{c'_j,c'_k\})$ is a simple pair of size $2$ contradicting  Claim \ref{claim:s-all at least light}. Thus every couple is heavy for at most two internal paths. Similarly, we can prove that every couple is safe for at most two internal paths. 
\end{proof}
	
\begin{claim}
	\label{claim:s-two fixed H two S}
	If a couple is heavy for exactly two internal paths, then it is safe for  the other two paths. 
\end{claim}	
\begin{proof}
	  Assume  the claim is not true and $c_0c'_0$ is  heavy   for two internal paths, and light for at least one internal path. If $c_0c'_0$ is light for two internal paths, then 
	  $c_0c'_0$ is a simple pair that contradicts  Claim \ref{claim:s-all at least light}. 
	  So $c_0c'_0$ is light for one internal path $P^i$ and safe for one internal path $P^j$.   Without loss of generality, assume $c_0 \in \hat{X}^{i}_{1} \cup \Lambda^{i}$. 
	  
	  As $x^{(j)} \ge 1$, there is a couple $c_1c'_1$ which is heavy for $P^j$. Note that $c_0c'_0$ is heavy for at least one internal path with only one vertex. So $c_0 \ne c'
	 _0$. If $c_1 \ne c'_1$, then by Observation \ref{obs-simple pair}, $(c_0,c'_1)$ is a simple pair. But  $dam_{L,P^i}(c_0,c'_1) \geq 1$ for each $i \in \{0, 1,2,3\}$,   contrary to Claim \ref{claim:s-all at least light}.  Thus $c_1=c'_1$. By Observation \ref{obs-couple}, $c_1c'_1$ can not be heavy for an internal path with only one vertex.  So $j=3$ and $n_3 \geq 3$. Thus we may assume that $c_0c'_0$ is heavy for $P^0$ and $P^1$, light for $P^2$ and safe for $P^3$, i.e., $i=2$.

  	Then  $c_1c'_1$ is safe for $P^2$, for otherwise
	 $(\{c_0,c_1\},\{c'_0,c'_1\})$ is a simple pair of size $2$ contradicting  Claim \ref{claim:s-all at least light}.

	By Claim \ref{claim:s-evey couple heavy or safe for at most two},  we may assume that $c_1c'_1$ is light for $P^0$, and is either light for $P^1$ or safe for $P^1$.
	
	By Claim \ref{claim:s-x and z is at least one}, $x^{(2)} \geq 1$. Let  $c_2c'_2$ be a couple which is heavy for $P^{2}$. Then $c_2 \neq c'_2$. 
	
	We claim that $c_2c'_2$ is safe for $P^{3}$. Otherwise  $c_2c'_2$ is heavy or light for $P^{3}$. Without loss of generality, assume that $c_2 \in  \hat{X}^{3}_{1} \cup \Lambda^{3}$. By Observation \ref{obs-simple pair}, $(c_2,c'_0)$ is a simple of size $1$ and $dam_{L,P^i}(c_2,c'_0) \geq 1$, a contradiction to Claim \ref{claim:s-all at least light}.  
	
	Recall that we assumed that $c_0 \in \hat{X}^{2}_{1} \cup \Lambda^{2}$.
	Hence $dam_{L,P^i}(c_0,c'_2) \geq 1$ for $i=0,1$, $dam_{L,P^2}(c_0,c'_2)=2$ and $dam_{L,P^3}(c_0,c'_2) =0$. So if $c_1c'_1$ is light for $P^1$, then $(\{c_0,c_1\},\{c'_1,c'_2\})$ is a simple pair (by Observation \ref{obs-simple pair}) of size $2$ contradicting  Claim \ref{claim:s-all at least light}. Therefore, $c_1c'_1$ is safe for $P^1$.
	
Then $c_2c'_2$ must be heavy for $P^0$,  for otherwise $(\{c_1,c_2\}, \{c'_1,c'_2\})$ is a simple pair of size $2$ contradicting  Claim \ref{claim:s-all at most light}. 

Thus $c_2c'_2$ is either light for $P^1$ or safe for $P^1$.
	
	By Claim \ref{claim:s-x and z is at least one}, $z^{(0)} \geq 1$. Let $c_3c'_3$ be a couple which is safe for $P^{0}$. 
	
	We claim that $c_3c'_3$ is heavy for at least one of $P^{1}$ and $P^{2}$. Otherwise $c_3c'_3$ is heavy for $P^{3}$ by Claim \ref{claim:s-all at most light}. If $c_3c'_3$ is safe for $P^{2}$, then $(\{c_2,c_3\},\{c'_2,c'_3\})$ is a simple pair of size $2$ contradicting  Claim \ref{claim:s-all at most light}. 
	
	If $c_3c'_3$ is safe for $P^{1}$, then $(\{c_0,c_3\},\{c'_0,c'_3\})$ is a simple pair which contradicts Claim \ref{claim:s-all at most light}. So $c_3c'_3$ is light for $P^{1}$ and $P^{2}$. 
	
	Recall that $(c_0,c'_2)$ is a simple pair satisfying that $dam_{L,P^i}(c_0,c'_2)=2$ for   $i \in \{0,2\}$, $dam_{L,P^1}(c_0,c'_2)\geq 1$ and $dam_{L,P^3}(c_0,c'_2)=0$. Thus $(\{c_0,c_3\},\{c'_2,c'_3\})$ is a simple pair of size $2$ contradicting  Claim \ref{claim:s-all at least light}.
	
	 This completes the proof of the claim that $c_3c'_3$ is heavy for at least one of $P^{1}$ and $P^{2}$. Hence $c_3\neq c'_3$. If $c_3c'_3$ is safe for $P^3$,  then $(\{c_1,c_3\},\{c'_1,c'_3\})$ is a simple pair of size $2$ contradicting  Claim \ref{claim:s-all at most light}. So $c_3c'_3$ is not safe for $P^3$.  Thus $c_3 \in \hat{X}^3_{1}\cup \Lambda^3$ or $c'_3 \in \hat{X}^3_{n_3}\cup \Lambda^3$ (or both).    If $c'_3 \in \hat{X}^3_{n_3}\cup \Lambda^3$, then $(c_0,c'_3)$ is a simple pair of size $1$  contradicting  Claim \ref{claim:s-all at least light}. So $c_3c'_3$ is light for $P^3$ and $c_3 \in \hat{X}^3_{1}\cup \Lambda^3$. 
	 
	 If $c_3c'_3$ is   heavy for $P^1$, then $(c_3,c'_2)$ is a  simple pair of size $1$  contradicting  Claim \ref{claim:s-all at least light}. If $c_3c'_3$ is heavy for $P^2$,   then $(c_3,c'_0)$ is   a simple pair of size $1$  contradicting  Claim \ref{claim:s-all at least light}.  

	This completes the proof of Claim \ref{claim:s-two fixed H two S}.
\end{proof}

\begin{claim}
	\label{claim:s-not heavy for two paths with one being P3}
	No couple is heavy for two internal paths with one being $P^3$. 
\end{claim}	
\begin{proof}
Assume to the contrary that $c_0c'_0$ is heavy for $P^0$ and $P^3$. As $n_0=1$, by Observation \ref{obs-couple}, $c_0\neq c'_0$. By Claim \ref{claim:s-two fixed H two S}, $c_0c'_0$ is safe for $P^1$ and $P^2$. 
	
	By Claim \ref{claim:s-x and z is at least one}, $x^{(1)}\geq 1$. Let $c_1c'_1$ be  a couple which is heavy for $P^1$. Then $c_1\neq c'_1$. Observe that $c_1c'_1$ is   safe for $P^2$, for otherwise, we may assume   $c_1 \in \hat{X}^2_1 \cup \Lambda^2$, and hence $(c_1,c'_0)$ is a simple pair (by Observation \ref{obs-simple pair}) of size $1$ contradicting  Claim \ref{claim:s-all at least light}. 
	 Similarly,  there exists a couple  $c_2c'_2$, which is heavy for $P^2$ and safe for $P^1$, and $c_2 \neq c'_2$.
	
	At least one of $c_1c'_1$ and $c_2c'_2$ is heavy for $P^0$ or $P^3$, for otherwise, $(\{c_2,c_3\},\{c'_2,c'_3\})$ is a simple pair of size $2$ contradicting  Claim \ref{claim:s-all at most light}. Without loss of generality, assume that $c_1c'_1$ is heavy for $P^0$ or $P^3$.   For convenience,    assume that $c_1c'_1$ is heavy for $P^0$, and we will not use the fact that $n_1 =1$. By Claim \ref{claim:s-two fixed H two S}, $c_1c'_1$ is safe for $P^3$. 
	
	As $c_2\neq c'_2$,  $c_2c'_2$ is safe for $P^3$, since otherwise, without loss of generality, we assume $c_2 \in \hat{X}^3_1 \cup \Lambda^3$. Then $(c_2,c'_1)$ is a simple pair (by Observation \ref{obs-simple pair}) of size $1$ contradicting  Claim \ref{claim:s-all at least light}. Similarly, we have $(c_2,c'_2)$ is heavy for $P^0$, for otherwise,  without loss of generality, we assume $c_2 \notin \hat{X}^0_1 \cup \Lambda^0$.  Then $(c_2,c'_1)$ is a  simple pair (by Observation \ref{obs-simple pair}) of size $1$ contradicting  Claim \ref{claim:s-all at most light}.
	
	By Claim \ref{claim:s-x and z is at least one}, $z^{(0)}\geq 1$. Let $c_3c'_3$ be a couple which is safe for $P^{0}$. Note that $(c_1,c'_2)$ is a simple pair of size $1$ which is heavy for $P^0$, light for $P^1$ and $P^2$, and safe for $P^3$.		
	Therefore,  if $c_3c'_3$ is neither heavy for $P^1$ nor for $P^2$, then $(\{c_1,c_3\},\{c'_2,c'_3\})$ is a simple pair of size $2$ contradicting  Claim \ref{claim:s-all at most light}. Thus without loss of generality, assume that $c_3c'_3$ is heavy for $P^1$. If $c_3c'_3$ is also heavy for $P^2$, then by Claim \ref{claim:s-two fixed H two S}, $c_3c'_3$ is safe for $P^3$. But then  $(\{c_0,c_3\},\{c'_0,c'_3\})$ is a simple pair of size $2$ contradicting  Claim \ref{claim:s-all at least light}. So $c_3c'_3$ is not heavy for $P^2$. If $c_3c'_3$ is   safe for $P^2$, then $(\{c_2,c_3\},\{c'_2,c'_3\})$ is a simple pair of size $2$ contradicting  Claim \ref{claim:s-all at most light}. Thus $c_3c'_3$ is light for $P^2$. Without loss of generality, assume that $c_3 \notin \hat{X}^2_1 \cup \Lambda^2$. But then $(c_3,c'_2)$ is a simple pair of size $1$ contradicting  Claim \ref{claim:s-all at most light}.  
	
	This completes the proof of Claim \ref{claim:s-not heavy for two paths with one being P3}.	
\end{proof}

\begin{claim}
	\label{claim:s-every couple is heavy for at most one path}
	Every couple is heavy for exactly one internal path. 
\end{claim}	
\begin{proof}
	By Claim \ref{claim:s-all at most light},   every couple is heavy for at least one internal path.
	Suppose to the contrary, $c_0c'_0$ is heavy for two internal paths. By Claim \ref{claim:s-not heavy for two paths with one being P3}, we may assume that $c_0c'_0$ is heavy for $P^0$ and $P^1$. By Claim \ref{claim:s-two fixed H two S}, $c_0c'_0$ is safe for both $P^2$ and $P^3$.  
 	
	By Claim \ref{claim:s-x and z is at least one}, $x^{(2)}\geq 1$. Let $c_1c'_1$ be a couple which is heavy for $P^2$. Then $c_1 \neq c'_1$. Note that $c_1c'_1$ must be safe for $P^3$, for otherwise,   we assume that $c_1 \in \hat{X}^3_1 \cup \Lambda^3$. Then $(c_1,c'_0)$ is a simple pair (By Observation \ref{obs-simple pair}) of size $1$ contradicting  Claim \ref{claim:s-all at least light}.  
	
	As $x^{(3)} \geq 1$,  there exists a couple  $c_2c'_2$  which is heavy for $P^3$. By Claim \ref{claim:s-not heavy for two paths with one being P3}, we know that $c_2c'_2$ is not heavy for any   of $P^0$, $P^1$ and $P^2$.  
	
	We first claim that $c_1c'_1$ is heavy for exactly one of $P^0$ and $P^1$. Suppose this is not true. By Claim \ref{claim:s-evey couple heavy or safe for at most two}, $c_1c'_1$ can not be safe for three paths, so $c_1c'_1$ is light for at least one of $P^0$ and $P^1$, without loss of generality, say $P^0$, and assume that $c_1 \notin \hat{X}^0_1 \cup \Lambda^0$. If $c_1c'_1$ is safe for $P^1$, then $(c_1,c'_0)$ is a simple pair of size $1$ contradicting  Claim \ref{claim:s-all at most light}. Thus $c_1c'_1$ is also light for $P^1$. Recall that $c_2c'_2$ is heavy for none of $P^0$, $P^1$ and $P^2$. If $c_2c'_2$   is safe for $P^2$,   then $(\{c_1,c_2\},\{c'_1,c'_2\})$ is a  simple pair of size $2$ contradicting  Claim \ref{claim:s-all at most light}. So $c_2c'_2$ is light for $P^2$.  By Claim \ref{claim:s-all at least light}, $c_2c'_2$ is safe for at least one of $P^0, P^1$.  Assume that it is safe for $P^1$. Then  $c_2c'_2$ is light for $P^0$, for otherwise $(\{c_0,c_2\},\{c'_0,c'_2\})$ is a  simple pair of size $2$ contradicting  Claim \ref{claim:s-all at most light}. Recall that $c_1 \neq c'_1$, and we assumed that $c_1 \notin \hat{X}^0_1 \cup \Lambda^0$. 
	So $(c_1,c'_0)$ is a simple pair of size $1$ such that it is light for $P^0$ and $P^2$,   safe for $P^3$. Hence $(\{c_1,c_2\},\{c'_0,c'_2\})$ is a simple pair of size $2$ contradicting  Claim \ref{claim:s-all at most light}. 
	
Without loss of generality, we assume that $c_1c'_1$ is heavy for $P^0$, by Claim \ref{claim:s-two fixed H two S}, $c_1c'_1$ is safe for $P^1$. Note that $(c_0,c'_1)$ is a  simple pair of size $1$ which is heavy for $P^0$, light for $P^1$ and $P^2$, and safe for $P^3$. If $c_2c'_2$ is safe for $P^0$, then 
	$(\{c_0,c_2\},\{c'_1,c'_2\})$ is a simple pair of size $2$ contradicting  Claim \ref{claim:s-all at most light}. Hence $c_2c'_2$ is
	light for $P^0$.  On the other hand,  by Claim \ref{claim:s-all at least light}, $c_2c'_2$ is safe for some $P^i$. Without loss of generality, we assume that $c_2c'_2$ is safe for $P^1$.

	By Claim \ref{claim:s-x and z is at least one}, $z^{(0)}\geq 1$, we may assume that $c_3c'_3$ is safe for $P^{0}$. Note that $c_3c'_3$ is heavy for at least one of $P^1$ or $P^2$, for otherwise, $(\{c_0,c_3\},\{c'_1,c'_3\})$ is a simple pair of size $2$ contradicting  Claim \ref{claim:s-all at most light} .  So $c_3 \neq c'_3$, and  by Claim \ref{claim:s-not heavy for two paths with one being P3}, $c_3c'_3$ is not heavy for $P^3$.
	
	First assume that $c_3c'_3$ is heavy for $P^1$. If $c_3c'_3$ is also heavy for $P^2$, then by Claim \ref{claim:s-two fixed H two S}, $c_3c'_3$ is safe for $P^3$.	
	But then  $(\{c_0,c_2\},\{c'_2,c'_3\})$ is a simple pair of size $2$ contradicting  Claim \ref{claim:s-all at most light} (recall that by Claim \ref{claim:s-not heavy for two paths with one being P3}, $c_2c'_2$ is not heavy for $P^2$ as it is already heavy for $P^3$ ). So $c_3c'_3$ is not heavy for $P^2$. Thus without loss of generality, we may assume that $c_3 \notin \hat{X}^2_1 \cup \Lambda^2$. Then   $(c_3,c'_1)$ is a simple pair of size $1$ contradicting  Claim \ref{claim:s-all at most light}. So $c_3c'_3$ is not heavy for $P^1$. 
	
	Therefore, $c_3c'_3$ is heavy for $P^2$ but not heavy for $P^1$. Thus without loss of generality,  assume that $c_3 \notin \hat{X}^1_1 \cup \Lambda^1$. But then we have that $(c_3,c'_0)$ is a simple pair of size $1$ which is not heavy for any internal paths, a contradiction to Claim \ref{claim:s-all at most light}.
	
	This completes the proof of Claim \ref{claim:s-every couple is heavy for at most one path}.	
\end{proof}

\begin{claim}
	\label{claim:s-every couple is safe for at most one path}
	Every couple is safe for exactly one internal path. 
\end{claim}	
\begin{proof}
	By Claim \ref{claim:s-all at least light}, we know that every couple is safe for at least one internal path.
	 
	Assume to the contrary that $c_0c'_0$ is  safe for $P^{2}$ and $P^{3}$. As every couple is heavy for exactly one internal path, we may assume that $c_0c'_0$ is heavy for $P^{0}$, light for $P^{1}$.  Note that path $P^3$ is different from the other paths, as $n_i=1$ for $i=0,1,2$ and $n_3 $ can be greater than $1$. However, the argument below does not use this difference.

	By Claim \ref{claim:s-x and z is at least one}, $x^{(2)} \geq 1$, $x^{(3)} \geq 1$, and by Claim \ref{claim:s-every couple is heavy for at most one path}, every couple is heavy for exactly one path, thus we assume that $c_1c'_1$ is heavy for $P^{2}$ and $c_2c'_2$ is heavy for $P^{3}$. 
	
	If $c_1c'_1$ is safe for $P^{3}$ and $c_2c'_2$ is safe for $P^{2}$, then $(\{c_1,c_2\},\{c'_1,c'_2\})$ is a simple pair of size $2$ contradicting  Claim \ref{claim:s-all at most light}. So without loss of generality, we assume that $c_1c'_1$ is light for $P^{3}$. 
	
	By Claim \ref{claim:s-all at most light}, both $c_1c'_1$ and $c_2c'_2$ are light for $P^{0}$  by considering $(\{c_0,c_1\},\{c'_0,c'_1\})$ and $(\{c_0,c_2\},\{c'_0,c'_2\})$, respectively. Consequently, $c_1c'_1$ is safe for $P^{1}$ since otherwise $c_1c'_1$ is not safe for any internal path, a contradiction.
 	
	By Claim \ref{claim:s-x and z is at least one}, $x^{(1)} \geq 1$, there exists a couple, say $c_3c'_3$, which is heavy for $P^{1}$. By Claim \ref{claim:s-every couple is heavy for at most one path}, $c_3c'_3$ is not heavy for any other paths. Thus  $c_3c'_3$ is light for $P^{2}$, for otherwise, $(\{c_1,c_3\},\{c'_1,c'_3\})$ is a simple pair of size $2$ contradicting  Claim \ref{claim:s-all at most light}. 	
	
	If $c_3c'_3$ is safe for $P^{0}$, then  
	$(\{c_0,c_1,c_3\},\{c'_0,c'_1,c'_3\})$ is a simple pair of size $3$ which contradicts Claim \ref{claim:s-no simple pair of size 3 with damage at most 3}. So $c_3c'_3$ is light for $P^{0}$, which implies that $c_3c'_3$ is safe for $P^{3}$.
	Similarly,  $c_2c'_2$ is light for $P^{2}$, as otherwise $(\{c_1,c_2,c_3\},\{c'_1,c'_2,c'_3\})$ is a simple pair of size $3$ which contradicts Claim \ref{claim:s-no simple pair of size 3 with damage at most 3}.  This implies that $c_2c'_2$ is safe for $P^{1}$. But then $(\{c_2,c_3\},\{c'_2,c'_3\})$ is a simple pair of size $2$ which contradicts Claim \ref{claim:s-all at most light}.
	
	This completes the proof of Claim \ref{claim:s-every couple is safe for at most one path}.
\end{proof}
	
	 By Claim \ref{claim:s-x and z is at least one}, $x^{(i)} \geq 1$ for each $i \in\{0,1,2,3\}$. Without loss of generality, assume that $c_0c'_0$ is heavy for $P^3$, light for $P^1$ and $P^2$, safe for $P^0$. Also, we assume that $c_1c'_1$ is heavy for $P^0$, $c_2c'_2$ is heavy for $P^2$, $c_3c'_3$ is heavy for $P^1$. Observe that $c_1c'_1$ must be light for $P^3$, since otherwise, $c_0c'_0$ and $c_1c'_1$ comprise a simple pair of size $2$ contradicting Claim \ref{claim:s-all at most light}. By Claim \ref{claim:s-every couple is safe for at most one path}, $c_1c'_1$ must be safe for exactly one of $P^1$ and $P^2$,   say $P^2$, and then it is light for $P^1$. This implies that $c_2c'_2$ is light for $P^0$, for otherwise  $c_1c'_1$ and $c_2c'_2$ comprise a simple pair of size $2$ contradicting Claim \ref{claim:s-all at most light}. Similarly, $c_2c'_2$ is light for $P^3$ by considering Claim \ref{claim:s-no simple pair of size 3 with damage at most 3} and the three couples $c_0c'_0$, $c_1c'_1$ and $c_2c'_2$.  Consequently, $c_2c'_2$ is safe for $P^1$. Again by these techniques, $c_3c'_3$ is light for $P^2$ by considering  Claim \ref{claim:s-all at most light} and the two couples $c_2c'_2$, $c_3c'_3$, and light for $P^0$ by considering  Claim \ref{claim:s-no simple pair of size 3 with damage at most 3} and the three couples $c_1c'_1$, $c_2c'_2$ and $c_3c'_3$. So $c_3c'_3$ is safe for $P^3$. See Table \ref{table2}.
	
	\begin{table}[H]
		\centering
		\begin{tabular}{c|c|c|c|c|c} 
			\bottomrule[1pt] \hline $L(u)$ & $c_0$ & $c_1$ & $c_2$ & $c_3$ &   $\cdots$   \\	
			\hline $P^0$  & safe  & heavy    & light    & light        &   $\cdots$      \\
			\hline $P^1$  & light & light    & safe     & heavy        &  $\cdots$      \\
			\hline $P^2$  & light & safe     & heavy    & light         &    $\cdots$    \\ 
			\hline $P^3$  & heavy & light    & light    & safe         &    $\cdots$    \\	 
			\hline $L(v$) & $c'_0$  & $c'_1$ & $c'_2$ & $c'_3$ &   $\cdots$    \\	
			\toprule[1.5pt]
		\end{tabular} 
		\caption{$dam_{L,P^i}(c_i,c'_i)$}
		\label{table2}
	\end{table}

	If  $m-\tau\geq 4$, then $(\{c_0,c_1,c_2,c_3\},\{c'_0,c'_1,c'_2,c'_3\})$ is a simple pair of size $4$ contradicting  Claim \ref{claim:s-all at most light}. So $m-\tau \leq 3$. Recall that $m-\tau \geq 2$, so $m-\tau=2$ or $m-\tau=3$.
		
	By \ref{T5}, $S_L(P^i)-n_im \geq \frac{n_i+1}{2}+m-\tau \geq m-\tau +1$.
	If $m-\tau =2$, then $S_L(P^i)-n_im \geq 3 $, we let $S=\{c_0,c_1\}$ and $T=\{c'_0,c'_1\}$; If $m-\tau =3$, then $S_L(P^i)-n_im \geq 4 $, we let $S=\{c_0,c_1,c_2\}$ and $T=\{c'_0,c'_1,c'_2\}$. In either case, we find a simple pair of size $m-\tau$ which satisfies the theorem, a contradiction.

	This finishes the proof of the Theorem \ref{thm:s-main-second-part-stronger}.
\end{proof}

\begin{corollary}
	$\Theta_{2,2,2,2p}$ is $(2m+1,m)$-choosable
\end{corollary}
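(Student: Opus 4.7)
The plan is to derive this corollary directly from Theorem \ref{thm:s-main-second-part-stronger} by applying it with $\ell = 2m+1$ and $\tau = 0$. Given an arbitrary $(2m+1)$-list assignment $L$ of $G$, conditions (T1)--(T4) of that theorem are immediate, since every list has size $2m+1$ and $\tau = 0 \leq m$ while $\ell + \tau = 2m+1 \geq m$.

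The only step that requires verification is (T5). For each one-vertex path $P^i$ (that is, $i \in \{0,1,2\}$ always, and also $i = 3$ when $p = 1$), one has $S_L(P^i) = |L(v^i_1)| = 2m+1$, so $S_L(P^i) - n_i m = m+1$, and both terms of the maximum in (T5) are bounded above by $m+1$ because $dam_{L,P^i}(L(u),L(v)) \leq 2m+1$. For the case $n_3 = 2p-1 \geq 3$, I would invoke Lemma \ref{second-lower-bound-for-slp} to obtain $S_L(P^3) \geq p(2m+1)$, which handles the term $\frac{n_3+1}{2} + m - \tau = p + m$. The damage-involving term is handled by combining Lemma \ref{first-lower-bound-for-slp} with the inequality $|\hat{X}^3_1| + |\hat{X}^3_{n_3}| + |\Lambda^3| \geq dam_{L,P^3}(L(u),L(v))$, which is immediate from Lemma \ref{slp-dam}.

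Having verified the hypotheses, Theorem \ref{thm:s-main-second-part-stronger} yields sets $S \subseteq L(u)$ and $T \subseteq L(v)$ with $|S| = |T| = m$ such that $dam_{L,P^i}(S,T) \leq S_L(P^i) - n_i m$ for every $i$. I would assign $S$ to $u$ and $T$ to $v$, and for each path $P^i$ work with the reduced list assignment $L' = L \ominus (S,T)$ restricted to $P^i$. By the definition of damage, $S_{L'}(P^i) = S_L(P^i) - dam_{L,P^i}(S,T) \geq n_i m$. For a longer path, the two end-vertex lists shrink from size $2m+1$ to at least $m+1 \geq m$ and the internal lists (of size at least $2m+1 \geq 2m$) are unchanged; for a single-vertex path, the unique vertex has list of size $|L'(v^i_1)| = S_{L'}(P^i) \geq m$. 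The hypotheses of Lemma \ref{42lemma} are therefore satisfied on each $P^i$, so Lemma \ref{42lemma} produces an $(L',m)$-colouring of each $P^i$, and gluing these with the precolouring of $u$ and $v$ yields an $(L,m)$-colouring of $G$.

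No real obstacle is expected here: all of the combinatorial difficulty has already been packed into Theorem \ref{thm:s-main-second-part-stronger}, and the corollary amounts to a bookkeeping verification that its hypotheses hold for the uniform list size $2m+1$.
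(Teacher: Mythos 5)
Your proof is correct and follows essentially the same route as the paper: set $\ell=2m+1$, $\tau=0$, verify \ref{T1}--\ref{T5} (the paper applies Lemmas \ref{first-lower-bound-for-slp} and \ref{second-lower-bound-for-slp} uniformly to all paths including the single-vertex ones, while you split into $n_i=1$ and $n_3\ge 3$ cases, but the computations agree), and then invoke Theorem \ref{thm:s-main-second-part-stronger}. Your final paragraph spelling out the reduction to Lemma \ref{42lemma} on each $P^i$ with $L\ominus(S,T)$ is exactly the step the paper leaves implicit, and it is handled correctly.
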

\begin{proof}
	By setting $\ell =2m+1$ and $\tau=0$ in Theorem \ref{thm:s-main-second-part-stronger}, we know that $\Theta_{2,2,2,2p}$ is $(2m+1,m)$-choosable. Indeed, assume $L$ is a \bl{$(2m+1)$-list} assignment of $G=\Theta_{2,2,2,2p}$. By  Lemma \ref{second-lower-bound-for-slp}, $S_{L}(P^i) \geq \frac{n_i+1}{2}(2m+1)$, namely, $S_L(P^i)-n_im \geq \frac{n_i+1}{2}+m=\frac{n_i+1}{2}+m-\tau$. By Lemma \ref{first-lower-bound-for-slp}, $S_{L}(P^i) -\frac{n_i+1}{2}(2m+1)+(2m+1) \geq |\hat{X}^i_1|+|\hat{X}^i_n|+|\Lambda^i| \geq dam_{L,P^i}(L(u),L(v))$, so $S_L(P^i)-n_im \geq \frac{n_i+1}{2}+m+dam_{L,P^i}(L(u),L(v))-(2m+1)=\frac{n_i+1}{2}+m+dam_{L,P^i}(L(u),L(v))-\ell-\tau$. So \ref{T5} holds. Observe that $L$, $\ell$, $\tau$ also satisfies \ref{T1}-\ref{T4}. Therefore there exist two   sets   $S \subset L(u)$, $T \subset L(v)$ such that $|S|=|T|=m$ and $dam_{L,P^i}(S,T) \leq S_L(P^i) -n_im$ for $i=0,1,2,3$. Hence $G$ is $(2m+1,m)$-choosable.
\end{proof}

\section{Proof of Lemma \ref{main-lemma}}
\label{sec-main-lemma}

This section  proves Lemma \ref{main-lemma}.  I.e., 
\begin{equation}
\label{orginal-def-lemma} 
F(x,y)=\sum \binom{x}{a}\binom{y}{b}\binom{\ell-x-y}{k-a-b} \leq \frac 12 {\ell \choose k},
\end{equation}
where $x+y \le \ell, 2x+y \le \ell+k-1$, $k\geq 1$, $\ell \geq k+1$, and the summation is over  non-negative integer pairs $(a,b)$ for which $0 \leq a \leq x$, $0 \leq b \leq y$, $a+b \le k$ and  $2a+b \geq \max\{2x+y+k+1-\ell, k+1\}$.  Moreover,  we will show that the equality holds if and only if $\ell$ is even, $k$ is odd, and $x=\frac{\ell}{2}$, $y=0$.

Note that $a+b \le k$ and  $2a+b \geq  k+1$ implies that $a \ge 1$.

In the sequel, we define 
\begin{equation}
\label{eq-binomal}
\binom{p}{q}_+=
\begin{cases}
\binom{p}{q} & \text{if $p \geq q \geq 0$,}\\
0 & \text{if $q<0$ or $p <q$.}
\end{cases}
\end{equation}

For convenience, we allow $p<q$ or $q<0$ in the binormial coefficient in the summations below.   It is easy to check that in these cases,  either the pair $(a,b)$ does not lie in the range of the summation, and hence  contributes $0$ to the summations, or by extending  the equality $\binom{p+1}{q}=\binom{p}{q}+\binom{p}{q-1}$ to $q=0$. For the readability, we suppress the index `$+$'. 


The following lemma is proved in \cite{XuZhu2020} (Lemma 18 in \cite{XuZhu2020}, where the  parameter $2k$ is changed to $k$, but the proof still works).

\begin{lemma}
	\label{monotonous}
	Assume $x=x_0$ is fixed.      
	\begin{enumerate}[label= {(\arabic*)}]
		\item \label{mono-down} If $y \ge \ell-2x_0$, then $F(x_0,y+1)\leq F(x_0,y)$.
		\item \label{mono-up} If $ y < \ell -2x_0$, then $F(x_0,y) \leq F(x_0,y+1)$.
	\end{enumerate} 
\end{lemma}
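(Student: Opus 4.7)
The plan is to expand both $F(x_0, y+1)$ and $F(x_0, y)$ using Pascal's identity in complementary ways, and then match up the resulting pieces term by term. Specifically, I apply $\binom{y+1}{b} = \binom{y}{b} + \binom{y}{b-1}$ to the middle factor of $F(x_0, y+1)$; reindexing the second summand via $b \mapsto b+1$ yields a decomposition
\[
F(x_0, y+1) = S_1 + S_2,
\]
where the summand in $S_1$ is $\binom{x_0}{a}\binom{y}{b}\binom{\ell-x_0-y-1}{k-a-b}$ and the summand in $S_2$ is $\binom{x_0}{a}\binom{y}{b}\binom{\ell-x_0-y-1}{k-a-b-1}$, each over an appropriate range tied to $y+1$. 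Symmetrically, I apply $\binom{\ell-x_0-y}{k-a-b} = \binom{\ell-x_0-y-1}{k-a-b} + \binom{\ell-x_0-y-1}{k-a-b-1}$ to $F(x_0, y)$ to write $F(x_0, y) = T_1 + T_2$, whose summands exactly match those of $S_1$ and $S_2$ but are indexed over the range for $y$.

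The key observation is that $S_i$ and $T_i$ have identical summand expressions and differ only in the lower bound imposed on $2a+b$. When $y < \ell - 2x_0$, we have $2x_0 + y + k + 1 - \ell \leq k$, so the ``$\max$'' in the constraint for both $R_y$ and $R_{y+1}$ collapses to $k+1$; this forces $S_1 = T_1$, while $S_2 - T_2$ collects exactly the contributions with $2a+b = k$ (the constant drops from $k+1$ to $k$ after the $b\mapsto b+1$ shift in $S_2$). Since each such contribution is a non-negative product of binomial coefficients, $F(x_0, y+1) - F(x_0, y) = S_2 - T_2 \geq 0$, giving part (2). When $y \geq \ell - 2x_0$, the first term inside the $\max$ becomes active; now $S_2 = T_2$, while $T_1 - S_1$ collects the non-negative terms with $2a+b = 2x_0 + y + k + 1 - \ell$, so $F(x_0, y+1) - F(x_0, y) = -(T_1 - S_1) \leq 0$, giving part (1).

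The main obstacle is careful bookkeeping at the boundaries. I need to use the convention $\binom{p}{q}_+ = 0$ for $q < 0$ or $p < q$ to absorb boundary terms: for instance, $a+b = k$ in $T_2$'s range contributes $\binom{\ell-x_0-y-1}{-1} = 0$, which lets me silently treat the $a+b \leq k$ versus $a+b \leq k-1$ discrepancy as invisible. I also have to sanity-check the transition point $2x_0 + y = \ell$, where both sides of the comparison have the two expressions inside the $\max$ equal to $k+1$, and verify that the two cases glue consistently. Once these routine verifications are done, the lemma reduces to the elementary fact that both residual sums consist only of non-negative terms.
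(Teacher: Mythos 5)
Your proof is correct. Note that the paper does not actually prove Lemma \ref{monotonous}; it only cites Lemma 18 of \cite{XuZhu2020}, so your argument supplies a proof the paper omits. The two complementary Pascal expansions do exactly what you claim: in both cases the ranges of $S_1,T_1$ and of $S_2,T_2$ agree except for the lower bound on $2a+b$ (the shift $b\mapsto b+1$ lowers the threshold in $S_2$ by one, and raising $y$ by one raises the threshold in $S_1$ by one precisely when the first argument of the $\max$ is active), the $a+b\le k$ versus $a+b\le k-1$ discrepancy is killed by $\binom{\ell-x_0-y-1}{-1}=0$, and the residual difference is a sum of products of binomial coefficients over a single level set of $2a+b$, hence has the claimed sign. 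This is the natural route and, as far as can be judged from the statement of the cited lemma, essentially the same decomposition used there.
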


We consider two cases.

\bigskip
\noindent
{\bf Case 1.} $x \leq \lfloor\frac{\ell}{2}\rfloor$.

\noindent 
 By Lemma \ref{monotonous}, $F(x,y) \leq  F(x,\ell-2x)$. 
So it suffices to show that $F(x,\ell-2x) \leq  \frac 12 \binom{\ell}{k}$.
Recall that (by Equality (\ref{orginal-def-lemma}))
$$F(x,\ell-2x)  = \sum\limits_{t=k+1}^{2k}\sum\limits_{2a+b =t}\binom{x}{a}\binom{\ell-2x}{b}\binom{x}{k-a-b}=\sum\limits_{t=k+1}^{2k} C(t,x),$$ 
where  $$C(t,x) =  \sum\limits_{2a + b=t}  \binom{x}{a} \binom{\ell-2x}{b} \binom{x}{k -a-b} = \sum\limits_{2a \leq t}  \binom{x}{a} \binom{\ell-2x}{t-2a} \binom{x}{k +a - t}.$$ 
Note that for any $0 \le x \le \frac{\ell}{2}$, $\sum_{t=0}^{2k}C(t,x) = \binom{\ell}{k}$. 

For  $0 \leq t \leq k$,  $$	C(t,x)  = \sum\limits_{2a \leq t} \binom{x}{a} \binom{\ell-2x}{t-2a} \binom{x}{k +a - t}  =  \sum\limits_{2a' \leq 2k-t} \binom{x}{a'} \binom{\ell-2x}{2k-t-2a'} \binom{x}{a'+t-k} = C(2k -t,x), $$
where $a'=k+a-t$.

When $1 \leq x \leq \lfloor\frac{\ell}{2}\rfloor$, 
\begin{equation}
\label{eq-function-F(x,y)} 
F(x,\ell-2x) = \sum_{t=k+1}^{2k}C(t,x) = \frac {\binom{\ell}{2k} - C(k,x)}{2}. 
\end{equation}

\begin{lemma}
	\label{lemma:C(t,x) is at least 0}
	$C(k, x)\geq 0$ when $1 \leq x \leq \lfloor\frac{\ell}{2}\rfloor$ and the equality holds if and only if $\ell=2x$ and $k$ is odd. 
\end{lemma}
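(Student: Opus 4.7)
The plan is as follows. First I would turn the definition of $C(k,x)$ into a closed-form sum by setting $t = k$ in $C(t,x) = \sum_{2a+b=t} \binom{x}{a}\binom{\ell-2x}{b}\binom{x}{k-a-b}$ and substituting $b = k - 2a$, which collapses the third binomial coefficient to $\binom{x}{a}$. This yields
\[ C(k,x) = \sum_{a=0}^{\lfloor k/2 \rfloor} \binom{x}{a}^2 \binom{\ell-2x}{k-2a}. \]
The hypothesis $x \leq \lfloor \ell/2 \rfloor$ guarantees $\ell - 2x \geq 0$, so every summand is non-negative under the convention (\ref{eq-binomal}), making the inequality $C(k,x) \geq 0$ immediate.

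For the equality characterization I would argue by cases on $\ell - 2x$. If $\ell = 2x$, then $\binom{0}{k-2a}$ vanishes whenever $k - 2a \neq 0$, so for odd $k$ the sum is empty and $C(k,x) = 0$, proving the ``if'' direction. For even $k$ with $\ell = 2x$, the term $a = k/2$ contributes $\binom{x}{k/2}^2 \binom{0}{0}$, which is strictly positive because $\ell \geq k+1$ combined with $\ell = 2x$ forces $x \geq (k+2)/2 > k/2$. Hence within the subcase $\ell = 2x$, only odd $k$ gives $C(k,x)=0$.

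For the remaining subcase $\ell - 2x \geq 1$, I would exhibit a strictly positive summand. If $x \geq \lfloor k/2 \rfloor$, I take $a = \lfloor k/2 \rfloor$: for even $k$ the term is $\binom{x}{k/2}^2 \binom{\ell-2x}{0} > 0$, while for odd $k$ it is $\binom{x}{(k-1)/2}^2 (\ell - 2x) > 0$. If $x < \lfloor k/2 \rfloor$, I take $a = x$, yielding $\binom{\ell-2x}{k-2x}$, positive because $\ell \geq k+1$ gives $\ell - 2x \geq k - 2x + 1 > k-2x \geq 0$. Combined, these cases force $\ell = 2x$ and $k$ odd whenever $C(k,x) = 0$, completing the characterization. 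The only delicate point is that the summation index $a$ is capped by $\min(x, \lfloor k/2 \rfloor)$, so each chosen ``witness'' index must be verified to lie in the allowed range; beyond this bookkeeping there is no real obstacle.
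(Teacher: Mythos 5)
Your proof is correct and follows essentially the same route as the paper: both observe that $C(k,x)=\sum_{2a\le k}\binom{x}{a}^2\binom{\ell-2x}{k-2a}$ is a sum of non-negative terms (so non-negativity is immediate from $\ell-2x\ge 0$), and both establish the equality characterization by exhibiting a strictly positive summand---at $a=\lfloor k/2\rfloor$ when $x\ge\lfloor k/2\rfloor$ and at $a=x$ when $x<\lfloor k/2\rfloor$---unless $\ell=2x$ and $k$ is odd.
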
 
\begin{proof}
	If $x=\frac{\ell}{2}$ and $k$ is odd, then $y=\ell-2x=0$, which implies that $b=0$ as $0 \leq b \leq y$. Therefore, as $2a$ is even, we have  
	$C(k, x) = \sum_{2a=k}\binom{x}{a}^2=0$. 

	Assume $\ell \ne 2x$ or $\ell=2x$ and $k$ is even.
 
	First assume that $x \geq \lfloor\frac{k}{2}\rfloor$. As $x \leq \lfloor\frac{\ell}{2}\rfloor$, so $\ell-2x \geq 0$. Note that 
	$$k-2\lfloor\frac{k}{2}\rfloor = \begin{cases}
	0 & \text{if $k$ is even,}\\
	1 & \text{if $k$ is odd.}
	\end{cases}$$
	By Equality (\ref{eq-binomal}), $\binom{\ell-2x}{k-2\lfloor\frac{k}{2}\rfloor}=0$ if and only if $\ell-2x < k-2\lfloor\frac{k}{2}\rfloor$, i.e. $\ell-2x=0$ and $k-2\lfloor\frac{k}{2}\rfloor=1$, which means that $\ell=2x$ and $k$ is odd. So $\binom{\ell-2x}{k-2\lfloor\frac{k}{2}\rfloor}>0$ and 
	\begin{equation*} 
		C(k, x) \geq \binom{x}{\lfloor\frac{k}{2}\rfloor}^2\binom{\ell-2x}{k-2\lfloor\frac{k}{2}\rfloor} \geq 1.
	\end{equation*} 
 
Next assume that $1 \leq x \leq \lfloor\frac{k}{2}\rfloor-1$, then $k-2x >0$. Recall that $\ell \geq k+1$, so $\ell-2x > k-2x$. Hence,  
\begin{equation*} 
C(k, x) \geq \binom{x}{x}^2\binom{\ell-2x}{k-2x} \geq 1.
\end{equation*} 
	This completes the proof of Lemma \ref{lemma:C(t,x) is at least 0}.
\end{proof}
 
 Lemma \ref{lemma:C(t,x) is at least 0} and Inequality (\ref{eq-function-F(x,y)}) implies that when $1 \leq x \leq \lfloor\frac{\ell}{2}\rfloor$, $F(x,y) \leq \frac{1}{2}\binom{\ell}{k}$, and the equality holds if and only if $\ell=2x$ and $k$ is odd.

\bigskip
\noindent
{\bf Case 2.} $x \geq \lceil \frac{\ell}{2} \rceil$.

\noindent In this case,  $y \geq 0 \geq  \ell-2x$. By  Lemma \ref{monotonous}\ref{mono-down}, $F(x,y) \leq F(x,0)$. 

Note that in this case,  $b=y=0$ and $2x+y+k+1-\ell \geq  k+1$, so $2a+b=2a \geq 2x+y+k+1-\ell = 2x+k+1-\ell$. For brevity, let $p(x)=x+\lceil\frac{k+1-\ell}{2}\rceil$. Then $a \geq p(x)$ and
\begin{equation}
\label{eq-F(X,0)} 
F(x,0) = \sum\limits_{i=p(x)}^{k}\binom{x}{i}\binom{\ell-x}{k-i}. 
\end{equation}

\begin{lemma}
  $F(x,y) \leq F(\lceil \frac{\ell}{2} \rceil,0)$ whenever $x \geq \lceil \frac{\ell}{2} \rceil$.
\end{lemma}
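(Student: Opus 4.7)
The plan is to reduce the claim to a one-variable monotonicity statement in $x$, and then establish that monotonicity by a Pascal-rule computation. For any $x \geq \lceil \ell/2 \rceil$ and any admissible $y \geq 0$ we have $y \geq 0 \geq \ell - 2x$, so Lemma \ref{monotonous}\ref{mono-down} gives $F(x,y) \leq F(x,0)$. Therefore it suffices to prove
\begin{equation*}
F(x+1,0) \leq F(x,0) \text{ for every } x \geq \lceil \ell/2 \rceil,
\end{equation*}
and then iterate down to $x = \lceil \ell/2 \rceil$.

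The main step will be to evaluate the difference $F(x,0) - F(x+1,0)$ in closed form. Since $p(x+1) = p(x) + 1$, I would split off the boundary term $i = p(x)$ and write
\begin{align*}
F(x,0) - F(x+1,0) &= \binom{x}{p(x)}\binom{\ell - x}{k - p(x)} \\
&\quad + \sum_{i = p(x)+1}^{k}\left[\binom{x}{i}\binom{\ell-x}{k-i} - \binom{x+1}{i}\binom{\ell-x-1}{k-i}\right].
\end{align*}
Expanding each bracket via Pascal's rule twice (once on $\binom{\ell-x}{k-i}$ and once on $\binom{x+1}{i}$) shows that it equals $\binom{x}{i}\binom{\ell-x-1}{k-i-1} - \binom{x}{i-1}\binom{\ell-x-1}{k-i}$, which is precisely $T(i+1) - T(i)$ for $T(i) = \binom{x}{i-1}\binom{\ell-x-1}{k-i}$. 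Hence the inner sum telescopes to $T(k+1) - T(p(x)+1) = -\binom{x}{p(x)}\binom{\ell-x-1}{k-p(x)-1}$, and one more application of Pascal's rule gives the clean identity
\begin{equation*}
F(x,0) - F(x+1,0) = \binom{x}{p(x)}\binom{\ell-x-1}{k-p(x)}.
\end{equation*}

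To conclude, I would verify that both factors on the right are non-negative. Since $\ell \geq k+1$ forces $\lceil (k+1-\ell)/2\rceil \leq 0$, we have $0 \leq p(x) \leq x$, so $\binom{x}{p(x)} \geq 0$; and a short parity-based calculation using $\ell \geq k+1$ gives $\ell - x - 1 \geq k - p(x)$ in the nontrivial range (with the binomial vanishing by the convention introduced earlier otherwise), so $\binom{\ell-x-1}{k-p(x)} \geq 0$ as well. I expect the main obstacle to be spotting the right Pascal-rule manipulation so that the bracket telescopes cleanly into a single binomial product; once the displayed identity is established, the non-negativity check is routine and the lemma follows immediately by iterating $F(x+1,0) \leq F(x,0)$ down from $x$ to $\lceil \ell/2 \rceil$.
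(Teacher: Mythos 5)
Your proposal is correct and follows essentially the same route as the paper: reduce to $F(x,0)$ via Lemma \ref{monotonous}\ref{mono-down}, then show $F(x,0)-F(x+1,0)=\binom{x}{p(x)}\binom{\ell-x-1}{k-p(x)}\geq 0$ by Pascal-rule manipulation. Your explicit telescoping with $T(i)=\binom{x}{i-1}\binom{\ell-x-1}{k-i}$ is a slightly cleaner presentation of what the paper does by re-indexing and cancelling, but the underlying computation and the resulting identity are identical.
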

\begin{proof}
We first prove that $F(x,0) \geq  F(x+1,0)$.  
 Let $\Delta  = F(x,0) -F(x+1,0)$, then
\begin{equation}
\label{delta3}
\Delta=\sum\limits_{i=p(x)}^{k}\binom{x}{i}\binom{\ell-x}{k-i}-\sum\limits_{i=p(x+1)}^{k}\binom{x+1}{i}\binom{\ell-1-x}{k-i}.
\end{equation}
Note that $p(x+1)=p(x)+1$. Using equalities $\binom{x+1}{i} = \binom{x}{i}+\binom{x}{i-1}$ and $\binom{\ell-x}{k-i}=\binom{\ell-x-1}{k-i}+\binom{\ell-x-1}{k-i-1}$,  and cancel the term $ \sum\limits_{j=p(x)+1}^{k-1}\binom{x}{i}\binom{\ell-x-1}{k-i}$, we have

$$\Delta =\binom{x}{p(x)}\binom{\ell-x}{k-p(x)}+\sum\limits_{i=p(x)+1}^{ k}\binom{x}{i}\binom{\ell-1-x}{k-1-i}  - \sum\limits_{i=p(x)+1}^{k}\binom{x}{i-1}\binom{\ell-1-x}{k-i}.$$
When $i=k$ in the first sum above, we have $\binom{\ell-1-x}{-1}=0$. Writing the last sum in the  equality above as $\sum\limits_{i=p(x)}^{k-1}\binom{x}{i}\binom{\ell-1-x}{k-1-i}$,   we have 
\begin{equation*}
	\Delta  = \binom{x}{p(x)}\binom{\ell-x}{k-p(x)} - \binom{x}{p(x)}\binom{\ell-1-x}{k-p(x)-1}  
  =\binom{x}{p(x)}\binom{\ell-1-x}{k-p(x)} \geq 0.
\end{equation*}
So, $F(x,y) \leq F(\lceil \frac{\ell}{2} \rceil, 0)$.
\end{proof}

If $\ell$ is even, then by Lemma \ref{monotonous} and the case that $x \leq \lfloor\frac{\ell}{2}\rfloor$, we have $ F(\lceil \frac{\ell}{2} \rceil,0) = F( \lfloor\frac{\ell}{2}\rfloor,\ell- 2\lfloor\frac{\ell}{2}\rfloor) \leq \frac{1}{2}\binom{\ell}{k}$, the equality holds if and only if $\ell=2x$ and $k$ is odd.  

In the rest of the proof, we assume that $\ell$ is odd, and we prove that $F(\lceil \frac{\ell}{2} \rceil, 0) \leq F(\lfloor\frac{\ell}{2}\rfloor,1)$, i.e., $F(\frac{\ell+1}{2},0) \leq F(\frac{\ell-1}{2},1)$, which implies that Lemma \ref{main-lemma} when $x \geq \lceil \frac{\ell}{2} \rceil$, as by the first case, $F(\frac{\ell-1}{2},1) < \frac{1}{2}\binom{\ell}{k}$.

 Note that in $F(\frac{\ell-1}{2},1)$, the summation is over $b=0$ and $1$. Using $\binom{\frac{\ell-1}{2}}{-1}=0$ and writing $\sum\limits_{i= \lceil\frac{k}{2}\rceil}^{k-1}\binom{\frac{\ell-1}{2}}{i}\binom{\frac{\ell-1}{2}}{k-1-i}$ as $\sum\limits_{i= \lceil\frac{k}{2}\rceil+1}^{k}\binom{\frac{\ell-1}{2}}{i-1}\binom{\frac{\ell-1}{2}}{k-i}$, we have  
 \begin{align*} 
 F(\frac{\ell-1}{2},1) & =  \sum\limits_{i= \lceil\frac{k}{2}\rceil}^{k}\binom{\frac{\ell-1}{2}}{i}\binom{1}{1}\binom{\frac{\ell-1}{2}}{k-1-i}+ \sum\limits_{i=\lceil\frac{k+1}{2}\rceil}^{k}\binom{\frac{\ell-1}{2}}{i}\binom{1}{0}\binom{\frac{\ell-1}{2}}{k-i}\\
  & =  \sum\limits_{i= \lceil\frac{k}{2}\rceil+1}^{k}\binom{\frac{\ell-1}{2}}{i-1}\binom{\frac{\ell-1}{2}}{k-i}+ \sum\limits_{i=\lceil\frac{k+1}{2}\rceil}^{k}\binom{\frac{\ell-1}{2}}{i}\binom{\frac{\ell-1}{2}}{k-i} \\
  & \geq \sum\limits_{i= \lceil\frac{k}{2}\rceil+1}^{k}\binom{\frac{\ell-1}{2}}{i-1}\binom{\frac{\ell-1}{2}}{k-i}+ \sum\limits_{i=\lceil\frac{k}{2}\rceil+1}^{k}\binom{\frac{\ell-1}{2}}{i}\binom{\frac{\ell-1}{2}}{k-i}.  
 \end{align*}  
On the other hand, by Equality (\ref{eq-F(X,0)}),  $F(\frac{\ell+1}{2},0)= \sum\limits_{i=\lceil\frac{k}{2}\rceil+1}^{k}\binom{\frac{\ell+1}{2}}{i}\binom{\frac{\ell-1}{2}}{k-i}$. Therefore, using equalities $\binom{\frac{\ell+1}{2}}{i}=\binom{\frac{\ell-1}{2}}{i}+\binom{\frac{\ell-1}{2}}{i-1}$, we have 
\begin{equation*} 
 F(\frac{\ell-1}{2},1)-F(\frac{\ell+1}{2},0) \geq \sum\limits_{i= \lceil\frac{k}{2}\rceil+1}^{k}\binom{\frac{\ell-1}{2}}{i-1}\binom{\frac{\ell-1}{2}}{k-i}+ \big(\sum\limits_{i=\lceil\frac{k}{2}\rceil+1}^{k}\binom{\frac{\ell-1}{2}}{i}\binom{\frac{\ell-1}{2}}{k-i}-\sum\limits_{i=\lceil\frac{k}{2}\rceil+1}^{k}\binom{\frac{\ell+1}{2}}{i}\binom{\frac{\ell-1}{2}}{k-i}\big) =0.
\end{equation*}

This completes the proof of Lemma \ref{main-lemma}.

\end{document}